\newtheorem{thm}{Theorem}
\newtheorem{cor}{Corollary}
\newtheorem{prop}{Proposition}
\newtheorem{rem}{Remark}
\newtheorem{lem}{Lemma}
\newcommand{\dd}{{\rm d}}
\newcommand{\W}{\boldsymbol{W}}
\newcommand{\M}{\boldsymbol{M}}
\newcommand{\D}{\boldsymbol{D}}
\newcommand{\I}{\boldsymbol{I}}
\newcommand{\Q}{\boldsymbol{Q}}
\newcommand{\Z}{\boldsymbol{Z}}
\newcommand{\bplus}{\boldsymbol{\beta}^+}
\newcommand{\bPsi}{\boldsymbol{\Psi}}
\newcommand{\bDelta}{\boldsymbol{\Delta}}
\newcommand{\bLambda}{\boldsymbol{\Lambda}}
\newcommand{\HH}{\boldsymbol{H}}
\begin{document}

\title{An optimal stopping problem for spectrally negative Markov additive processes}

\author{  M. \c{C}a\u{g}lar\footnote{ College of Sciences,  Department of Mathematics,
                  Ko\c{c} University,
                  Rumeli Feneri Yolu,
                  34450 Sariyer, Istanbul
                   Turkey,
                  \texttt{mcaglar@ku.edu.tr }} \and A. Kyprianou\footnote{Department of Mathematical Sciences, University of Bath, Claverton Down, BA2 7AY, UK,\texttt{a.kyprianou@bath.ac.uk}}
                 \and
                 C. Vardar-Acar\footnote{Middle East Technical University, Department of Statistics,  \"{U}niversiteler Mah. Dumlupınar Blv. No:1, 06800 \c{C}ankaya Ankara, Turkey, \texttt{cvardar@metu.edu.tr}}}

\maketitle

\begin{center}
{\it In  memory of Larry Shepp}
\end{center}

\begin{abstract}
Previous authors have considered optimal stopping problems driven by the running maximum of a spectrally negative L\'evy process as well as  of a one-dimensional diffusion; see e.g. \cite{KO, bottleneck, O, LP, guo_shepp, pedersen, gapeev}. Many of the aforementioned results are either implicitly or explicitly dependent on Peskir's maximality principle, cf. \cite{maximality_principle}. In this article, we are interested in understanding how some of the main ideas from these previous works can be brought into the setting of problems driven by the maximum of a class of Markov additive processes (more precisely Markov modulated L\'evy processes).
Similarly to \cite{O, KO, bottleneck}, the optimal stopping boundary is characterised by a system of ordinary first-order differential equations, one for each state of the modulating component of the Markov additive process. Moreover, whereas scale functions played an important role in the previously mentioned work,  we work instead with scale matrices for Markov additive processes here; as introduced by \cite{kypal, ivan}.
We exemplify our calculations in the setting of the Shepp-Shiryaev optimal stopping problem \cite{shep1, shep2}, as well as a family of capped maximum optimal stopping problems.

\smallskip

\noindent MSC Classification: 60G40

\smallskip

\noindent Key words: Optimal stopping, Scale matrices, excursion theory, Markov additive processes.
\end{abstract}

\section{Introduction}
We are interested in a family of optimal stopping problems driven by a class of Markov additive processes (MAPs) that have frequently appeared  in the applied probability literature. The aforesaid optimal stopping problems are based around  maximum functionals of such MAPs. In order to make precise the setting in which we want to work, we will start by defining the class of Markov additive processes that we are interested in.
\bigskip

Let $E$ be a finite state space and, hence, without loss of generality, we can write it in the form $\{1,\cdots, N\}$.
A c\`adl\`ag process $(X,J)$ in $\mathbb{R} \times E$
with probabilities  $\mathbb{P}_{(x,i)} = \mathbb{P}( \cdot \,\vert\, X_0 = x, J_0 = i)$, $x\in \mathbb{R}$, $i\in E$, is called a
{\it Markov additive process (MAP)}
if %$(J_{t}, t \ge 0)$ is a continuous-time Markov chain in $E$, and
%the following property is satisfied,
for any $i \in E$, $s,t \ge 0$ and bounded and measurable $f$, on the event $\{J_t = j\}$
\begin{eqnarray}
\label{e:MAP}
\mathbf{E}_{x,i}[f(X_{t+s}-X_{t}, J_{t+s})| \mathcal{G}_t]
& = \mathbf{E}_{0,j}[f(X_s, J_s)]
%\\
%& \text{ given $\{J_t = i, t<\varsigma\}$,
%the pair $(X_{t+s}-X_{t}, J_{t+s})$ is independent of
%$(X_u, u\leq t)$,}\notag\\
%&\text{ and has the same distribution as $(X_{s}-X_{0}, J_{s})$
%given $\{J_{0} = i\}$,}
\end{eqnarray}
where  $(\mathcal{G}_{t},t\geq 0)$
 is the filtration generated by $(X,J)$. % with natural enlargement, $t\geq 0$.
The process $J$ is thus a Markov chain on $E$ and is called the {\it modulator} of $X$, whereas   the latter is called the {\it ordinator}.
%\begin{eqnarray}
%\label{e:MAP}
%& \text{ given $\{J_{t} = i\}$,
%the pair $(X_{t+s}-X_t, J_{t+s})$ is independent of
%$\mathcal{G}_t: = \sigma((X_u, J_u), u\leq t)$,}\notag\\
%&\text{ and has the same distribution as $(X_s-X_{0}, J_{s})$
%given $\{J_{0} = i\}$.}
%\end{eqnarray}

%The process $J$ is called the {\it modulator} and $X$ is called the {\it ordinate} of the MAP.

\bigskip

Below, we will briefly discuss some relevant  aspects of the theory of MAPs for our purposes. The reader is principally referred to
\cite{Asm-rp1} and  \cite[\S XI.2a]{Asmussen} for further details. Older literature includes \cite{Cinlar1, Cinlar2, AS, AK}.
We will mainly appeal to the setup and  notation of
\cite{Iva-thesis},
where it was principally assumed that
$X$ is spectrally negative (only negative jumps).

% \bigskip
%
%
%The following proposition gives a characterisation of MAPs in terms of a mixture of L\'evy processes, a Markov chain and a family of additional jump distributions;
%see \cite[\S XI.2a]{Asmussen}, \cite[Proposition 2.5]{Iva-thesis} as well as more classical literature such as \cite{Cinlar1, Cinlar2, AS, AK}.

\bigskip

It turns out that
  the pair $(X,J)$ is a Markov additive process if and only if, for each $i,j\in E$,
  there exist a sequence of iid L\'evy processes
  $(X^{i, (n)}, n\ge 0)$ and  a sequence of iid random variables
  $(U_{i,j}^n, n\ge 0)$, independent
  of the chain $J$, such that if $\sigma_0 = 0$
  and $(\sigma_n,n \ge 1)$ are the
  jump times of $J$, the process $X$ has the representation
  \[ X_t = \mathbf{1}_{(n > 0)}( X_{\sigma_n -} + U_{J_{\sigma_n-}, J_{\sigma_n}}^n) + {X}^{J_{\sigma_n\!\!} \, ,(n)}_{t-\sigma_n},
\text{ for }t \in [\sigma_n, \sigma_{n+1}),\, n \geq 0. \]

For each $i \in E$, it will be convenient to define
 $X^i$ as a L\'evy process whose law is that of the $X^{i,(n)}$ processes in the above representation; and similarly, for each $i,j \in E$, define $U_{i,j}$ to
  be a random variable having the common law of the $U_{i,j}^n$ variables. From the above representation one may deduce that  MAPs are strong Markov processes. Indeed, they satisfy the slightly stronger property that
\eqref{e:MAP} holds with $t$ replaced by a stopping time, albeit on the event that the stopping time is finite.

\bigskip

Henceforth, we confine ourselves
to irreducible (and hence ergodic) Markov chains $J$.
Let the state space $E$ be the finite set $\{1, \ldots, N\}$, for some $N \in \mathbb{N}$. {\color{black}Moreover, we will additionally assume that each of the processes $X^i$ are spectrally negative L\'evy processes, allowing for the possibility that some of them have monotone increasing paths (but disallowing the possibility that any of them have purely non-increasing paths\footnote{\color{black}As noted in e.g. \cite{ivan}, we can  incorporate the case that some, but not all, of the $X^i$ have non-increasing paths by performing a time change and compressing each section of path that is non-increasing into a single negative jump.}), so long as the MAP may experience both upwards and downwards movements.}  Similarly, we will also assume that $U_{i,j}\leq 0$  for each $i,j\in E$. As such, the process $(X,J)$ is said to be  a spectrally negative MAP.
Note that this assumption ensures that $(X,J)$, when issued from $X_0 = 0$ and $J_0 = i\in E$, must have the property that 0 is regular for $(0,\infty)$. What is less clear is whether 0 is regular for $(-\infty,0)$ as well. This depends on the state $i\in E$. If  $X^{i}$ is of unbounded variation, then 0 is regular for $(-\infty,0)$  for $X$ when $X_0 = 0$, $J_0 = i$, otherwise, when there is bounded variation, there is an almost surely strictly positive time before $X$ enters $(-\infty,0)$, that is,  0 is irregular for $(-\infty,0)$ for $X$ \cite[pg.232]{kbook}.

\bigskip

Denote the transition rate matrix of the chain $J$ by
${\boldsymbol{Q}} = (q_{i,j})_{i,j \in E}$.
For each $i \in E$, the Laplace exponent of the L\'evy process $X^i$
will be written $\psi_i$. % , in the sense that ${\rm e}^{\psi_i(z)} = \LevE({\rm e}^{z X_i(1)})$, for all $z \in \CC$ for which the right-hand side exists.
For each pair of $i,j \in E$,
define
the Laplace transform $G_{i,j}(z) = \mathbb{E}({\rm e}^{z U_{i,j}})$
 of the
distribution of the jump $U_{i,j}$,
where this exists. Write ${\boldsymbol{G}}(z)$ for the $N \times N$ matrix
whose $(i,j)$th element is $G_{i,j}(z)$. %\footnote{Here, we understand the Laplace transform  of a random variable $W$ with law $P$ (and associated expectation operator $E$) to mean $E(\exp\{z W\})$, for all $z\in\mathbf{C}$ such that the expectation is finite. Moreover, we define the associated Laplace exponent as $\log E(\exp\{z W\}).$}
We will adopt the convention that $U_{i,j} = 0$ if
$q_{i,j} = 0$, $i \ne j$, and also set $U_{i,i} = 0$ for each $i \in E$.
\bigskip

The multidimensional analogue of the Laplace exponent of a L\'evy process is
provided by the matrix-valued function
\begin{equation}\label{e:MAP F}
 {\bPsi}(z) = {\rm diag}( \psi_1(z), \ldots ,\psi_N(z))
  + {\boldsymbol{Q}} \circ {\boldsymbol{G}}(z),
\end{equation}
for all $z \in \mathbb{C}$ where the elements on the right are defined,
where $\circ$ indicates elementwise multiplication.
It is  known
%\cite[Lemma 2.1]{AK-mg}
that
\begin{equation} \mathbb{E}_{(0,i)}[ {\rm e}^{z X_t} ; J_{t}=j] = \bigl({\rm e}^{\bPsi(z) t}\bigr)_{i,j} , \text{ for } i,\,j \in E,  t\geq 0,
\label{psi2}
\end{equation}
for all $z \in \mathbb{C}$ where one side of the equality is defined \cite[Prop.XI.2.2]{Asmussen}.
For this reason, $\bPsi$ is called the {\it matrix exponent} of
the MAP $(X, J)$.
Note  that $\bPsi(z)$ is well defined and finite at least for
${\rm Re}(z)\geq 0$.

\bigskip

For $z$ such that $\bPsi(z)$ is well defined,  there exists a  leading real-valued eigenvalue of the matrix $\bPsi(z)$, also called the
{\it Perron--Frobenius eigenvalue};
see \cite[\S XI.2c]{Asmussen} and \cite[Proposition 2.12]{Iva-thesis}.
If we denote this eigenvalue by $\kappa(z)$, then it turns out that it
 is larger than the real part of all its other eigenvalues.
Furthermore, the
corresponding
right-eigenvector $\boldsymbol{v}(z)$ has strictly positive entries,
and can be normalised such that
$  \boldsymbol\pi \cdot \boldsymbol{v}(z) = 1$,
where we recall that $\boldsymbol\pi$ is the stationary distribution of
the underlying chain $J$.

\bigskip
The
eigenvalue $\kappa \left( \theta \right) $ is a convex function on $(0,\infty)$  such that $\kappa \left( 0\right) =0$ and the derivative $\kappa'(0+)$  exists in $[-\infty,\infty)$.  A trichotomy similar in spirit to the one that describes the long term behaviour of L\'evy processes exists, which states that either $\lim_{t\to\infty}X_t = \infty$, $\lim_{t\to\infty}X_t = -\infty$
or $\limsup_{t\to\infty}X_t = -\liminf_{t\to\infty}X_t = \infty$ accordingly as $\kappa'(0+)>0$,   $\kappa'(0+)<0$ or $\kappa'(0+)=0$, respectively.
For
the right inverse of $\kappa$ we shall write $\Phi$. That is, for all $q\geq 0$,
\begin{equation}
\Phi(q) = \sup\{\theta \geq 0: \kappa(\theta)=q\}.
\label{kinverse}
\end{equation}
The properties of $\kappa$ imply that $\Phi(q)>0$ for $q>0$ and $\Phi(0) = 0$ if and only if $\kappa'(0+)\geq 0$, otherwise $\Phi(0)>0$.
\bigskip

The eigenvalue $\kappa$ also affords us the opportunity to  introduce the natural analogue of the Esscher transform for MAPs.  Specifically, for $t\geq 0$,
\begin{equation}
\left. \frac{\dd\mathbb{P}_{(x,i)}^{\gamma }}{\dd\mathbb{P}_{(x,i)}}\right| _{%
\mathcal{G}_{t}}:={\rm e}^{\gamma (X\left( t\right) -x)-\kappa \left( \gamma
\right) t}\frac{v_{J\left( t\right) }\left( \gamma \right) }{v_{i}\left(
\gamma \right) },%\text{ for }\gamma \text{ such that }\kappa \left( \gamma\right) <\infty
 \label{alaGirsanov}
\end{equation}
for $\gamma$ such that $\kappa(\gamma)<\infty$. (The most common use of \eqref{alaGirsanov} in this article will be  when we take the value $\gamma = \Phi(q)$, for $q\geq 0$.) The process $(X,\mathbb{P})$ is again a spectrally
negative MAP whose intensity matrix $\bPsi_\gamma\left(
\theta \right) $ is well defined and finite for $\theta \geq
-\gamma $. If $\bPsi_\gamma\left( \theta \right) $ has
largest eigenvalue $\kappa _{\gamma }\left( \theta \right) $ and
associated right eigenvector $\boldsymbol{v}_{\gamma }\left( \theta
\right) $, the triple $\left( \bPsi_\gamma\left( \theta
\right) ,\kappa _{\gamma }\left( \theta \right)
,\boldsymbol{v}_{\gamma }\left( \theta \right) \right) $ is related to
the original triple $\left(
\bPsi\left( \theta \right) ,\kappa \left( \theta \right) ,\boldsymbol{v}%
\left( \theta \right) \right) $ via
\begin{equation}
\bPsi_\gamma\left( \theta \right) =\bDelta _{\boldsymbol{v}}\left( \gamma
\right) ^{-1}\bPsi\left( \theta +\gamma \right) \bDelta _{\boldsymbol{v}%
}\left( \gamma \right) -\kappa \left( \gamma \right) \I\quad\text{ and }\quad%
\kappa _{\gamma }\left( \theta \right) =\kappa \left( \theta
+\gamma \right) -\kappa \left( \gamma \right)\;,
\label{meas-change-effect}
\end{equation}
where $\I$ is the $N\times N$ identity matrix and
\begin{equation*}
\bDelta _{\boldsymbol{v}}\left( \gamma \right) :=\text{diag}\left( v_{1}\left(
\gamma \right) ,\ldots ,v_{N}\left( \gamma \right) \right) .
\end{equation*}
Note in particular, for the choice $\gamma = \Phi(q)$, it is easy to verify that $\kappa_{\Phi(q)}'(0+)>0$ and hence $(X,\mathbb{P}^{\Phi(q)})$ drifts to $+\infty$.

\bigskip
These details and more concerning the basic characterization of
MAPs can be found in \cite[Chp.XI]{Asmussen}. See also \cite{ivan, Iva-thesis}.
\bigskip

Define $\overline{X}_t= \sup_{s\leq t}X_s$, $t\geq 0$ and let $G_t = \sup\{s\leq t:  X_s = \overline{X}_t\}$, from which we can define $\bar{J}_t = J_{G_t}$, $t\geq 0$. The quadruple $( X,\overline{X}, J, \bar{J} )$ is also a strong Markov process with respect to $(\mathcal{G}_t, t\geq 0)$, which is an important fact that will drive the analysis in this paper. To see this, we can write the process $( X,\overline{X}, J, \bar{J} )$ with additional indices, as $( X^{(x)},\overline{X}^{(s)}, J^{(i)}, \bar{J}^{(j)} )$ indicating its point of issue as the state $(x, s, i, j)$ where $x\leq s$ and $i,j\in E$.  For any  $\mathcal{G}_t$-stopping time  $\tau$, on $\{\tau<\infty\}$, we have for bounded, measurable $F: \mathbb{R}^2\times E^2\to[0,\infty)$, $\mathbb{E}\left[\left.
F(X^{(x)}_{\tau+t},
\overline{X}^{(s)}_{\tau+t},
J^{(i)}_{\tau+t},
\bar{J}^{(j)}_{\tau+t})\right|\mathcal{G}_\tau\right]\notag $ is equal to
\begin{align*}
%\left(
%\begin{array}{c}
%X^{(x)}_{\tau+t}\\
%\overline{X}^{(s)}_{\tau+t}\\
%J^{(i)}_{\tau+t}\\
%\bar{J}^{(j)}_{\tau+t}
%\end{array}
%\right)
%&=
\mathbb{E}_{(0,i')}\left[
F\left(
x' +  X_t,\,
  s'\vee(x'+ {\bar{X}} _t),\,
{J}_t,\,
  j' \mathbf{1}_{( s'\geq (x'+ \bar{X}_t))} + {J}_{G_t} \mathbf{1}_{( s'<(x'+ \bar{X}_t))}
\right)
\right]\, ,
%\begin{array}{c}
%x' +  \mathcal{X}^{(0)}_t\\
%  s'\vee(x'+ {\bar{\mathcal{X}}}^{(0)}_t)\\
%  \mathcal{J}^{(i')}_t\\
%  j' \mathbf{1}_{( s'\geq (x'+ {\bar{\mathcal{X}}}^{(0)}_t))} + \mathcal{J}_{\mathcal{G}_t}^{(i')}\mathbf{1}_{( s'<(x'+ {\bar{\mathcal{X}}}^{(0)}_t))}
%\end{array}
%\right)
\end{align*}
where $(x',s', i', j') = (X_\tau^{(x)}, \overline{X}_\tau^{(s)}, J_\tau^{(i)}, \bar{J}_\tau^{(j)})$. % and, for $x\leq s$ and $i,j\in E$,  $(\mathcal{X}^{(x')}, \bar{\mathcal{X}}^{(s')}, \mathcal{J}^{(i')}, \bar{\mathcal{J}}^{(j')})$ is an independent copy of $(X^{(x)}, \overline{X}^{(s)}, J^{(i)}, \bar{J}^{(j)})$, which is equivalent to the aforomentioned strong Markov property.
\bigskip

Henceforth, we refer to the quadruple $( X,\overline{X}, J, \bar{J} )$ as the \emph{Markov additive maximality  process}   (MAMP).  As such, we abuse our earlier notation and write $\mathbb{P}_{(x,s, i, j)}$ to denote the law conditional on $(X_0,\overline{X}_0, {J}_0, \bar{J}_0) = (x,s, i, j)$, for $x\in\mathbb{R}$, $s\geq x$, $i,j\in E$. The reader will note the deliberate abuse of notation, albeit being consistent with $\mathbb{P}_{(x,i)}$, for $x\in \mathbb{R}$ and $i\in E$. Writing $\mathbb{P} = (\mathbb{P}_{(x,s, i, j)}, x\leq s, i,j\in E)$, we will refer to the underlying MAP as $((X,J),\mathbb{P})$.
\bigskip

Let us return to the family of optimal stopping problems that we are interested in. Fundamentally, we want to consider  problems driven by the MAMP  that take the form
\begin{equation}
V(x,s,i,j)=\sup_\tau \mathbb{E}_{(x,s,i,j)}[{\rm e}^{-q\tau}f(\overline{X}_\tau,\bar{J}_\tau)], \qquad s\geq x, i,j\in E,
\label{OST}
\end{equation}
where the supremum is taken over the class of almost surely finite $\mathcal{G}_t$-stopping times, $q\geq 0$ and  $f: \mathbb{R}\times E \to (0,\infty)$ is a measurable function.
% such that
%\begin{equation}
%\mathbb{P}_{(x,s,i,j)}(\lim_{t\to\infty} {\rm e}^{-q t}f(\overline{X}_t,\bar{J\emph{}}_t)=0) = 1,
%\label{assumption}
%\end{equation}
%for all $s\geq x$, $i,j\in E$ and, for each $j\in E$,
%\begin{equation}
%f(s, j) = 0 \quad\text{ if and only if  }\quad s<s^*,
%\label{zero}
%\end{equation}
%for some $s^*\geq -\infty$.
%%The condition \eqref{assumption} allows us to make sense of any strategy that involves waiting for ever.
%The condition \eqref{zero} forces  no gain unless the maximum of $X$ is sufficiently large (however, we include the setting that $f(s,j)>0$ for all $s\in\mathbb{R}$ and $j\in E$).

\bigskip

Previous authors have considered optimal stopping problems driven by the running maximum of a spectrally negative L\'evy process $X$, as well as in setting of a general diffusion; see e.g. \cite{KO, bottleneck, O, LP, guo_shepp, pedersen, gapeev}. Many of the aforementioned results are either implicitly or explicitly dependent on Peskir's maximality principle, cf. \cite{maximality_principle}. In this article, we are interested in understanding how some of the main ideas from these previous works can be brought into the setting described above, albeit using a heuristic developed in Peskir \cite{maximality_principle} and in the PhD thesis of C. Ott \cite{curdin}.
\bigskip

Whereas there are several works concerning optimal stopping problems with regime switching, see e.g.  \cite{guo, DH}, we believe this is the first such work which considers the current setting of MAPs. What is also new in the current setting is that we make use in our analysis of the so-called {\it scale matrices}, introduced in \cite{kypal, ivan}. Furthermore, we introduce an alternative second scale matrix in this paper. These are matrix-valued functions which play a similar role of scale functions in the theory of spectrally negative L\'evy processes (a well known tool in the setting of a number of classical applied probability models, cf. \cite{KKR, kbook}), albeit in the setting of the family of MAPs described above.

\bigskip

The rest of this paper is organised as follows. In the next section we consider the formal definition of a scale matrix and look at some of its analytical properties. In Section \ref{SectOST}, we describe the solution to \eqref{OST} and highlight the strategy that we will take, which is based on what we call {\it Peskir--Ott heuristic}. Moreover, we pass through a number of technical results which allows us to prove our main result based on a standard verification technique.  In Section 4, we exemplify our calculations using the Shepp-Shiryaev optimal stopping problem. Finally, Section 5 concludes the paper and sets an outlook on a family of capped maximum optimal stopping problems.

\section{Scale matrices}
Theorem 3 of \cite{kypal} (see also Theorem 1 of \cite{ivan}) introduces a family of  $N\times N$ matrix function $\W^{(q)}: \mathbb{R}\to\mathbb{M}(\mathbb{R})$, the space of $N\times N$ matrices with real-valued entries, for $q\geq0$, such that $\W^{(q)}(x) ={\boldsymbol 0}$ (the zero matrix) for $x<0$ and otherwise is an almost-everywhere differentiable, non-decreasing function which is defined via the Laplace transform
$$\int^{\infty}_{0}{\rm e}^{-\beta x}\W^{(q)}( x)\dd x=(\bPsi(\beta)-q\I)^{-1}$$
for $\beta >\max\{{\rm Re}(z): z\in \mathbb{C}, {\rm det}(\bPsi(z)) =q \}$.
An important feature of the scale matrix is that it plays a fundamental role in several key fluctuation identities, very much in the spirit of scale functions in the setting of spectrally negative L\'evy processes (cf. Chapter 8 of \cite{kbook} and \cite{KKR}). This was first discussed in detail in \cite{kypal}.

\bigskip

A second scale function was introduced in \cite{kypal}, $\M^{(q)}$, and  this function was presented as the natural analogue of the second scale function that appears also in the theory of spectrally negative L\'evy processes.
In this section, we introduce an alternative second scale matrix (of course closely related to $\M^{(q)}$), which mirrors more clearly the situation for the second scale function in the theory of spectrally negative L\'evy processes; see Chapter 8 of \cite{kbook} or  \cite{KKR} for comparison. We define  the $N\times N$ matrix functions $\Z^{(q)} $ on $ \mathbb{R}$, for $q\geq0$,  by
\begin{equation}
\Z^{(q)}(x)=\I+\int^{x}_{0}\W^{(q)}(y){\dd y}~(q\I-\Q), \qquad x\in\mathbb{R}.
\label{Z}
\end{equation}
%As such, our calculations will take on a much more natural analogous form when compared to the setting of spectrally negative scale functions; see Chapter 8 of \cite{kbook} or the extended essay in \cite{KKR} for comparison.
Let $1\!\!1$ denote the vector $(1,\cdots, 1)^T\in \mathbb{R}^N$. Note that
\begin{equation}
[\Z^{(q)}(x)1\!\!1]_i=1+q\int^{x}_{0}[\W^{(q)}(y)1\!\!1]_i{\dd y}, \qquad x\in\mathbb{R}, i\in E.
\label{Zq1}
\end{equation}
%and, as we will see Theorem \ref{ZWlem}, $\Z^{(q)}(x)1\!\!1$ has non-negative entries.
This definition is consistent with the usage of the notation $Z^q$ in \cite[pg.1165]{JI} and coincides with (3) in \cite{ivan} for $\alpha=0$ and $q=0$.
\bigskip

 %it is relatively straightforward to see, in light of
Let us define $\tau^+_a= \inf\{t>0 : X_t>a\}$ and $\tau^-_0 = \inf\{t>0: X_t<0\}$.
 Considering \eqref{alaGirsanov} with $\gamma= \Phi(q)$, for $q\ge 0$, we denote by $\bLambda(q)$ the  intensity matrix of the modulator of the ascending ladder MAP under $\mathbb{P}^{\Phi(q)}$  (cf. Section 3 of \cite{kypal}). That is, $\mathbb{P}^{\Phi(q)}_{0,i}(J_{\tau^+_a}=j)= [{\rm e}^{a\bLambda(q)}]_{i,j} $. Note that $((X,J),\mathbb{P}^{\Phi(q)})$ is a process for which the ordinate drifts to infinity and hence $\bLambda(q)$ is the intensity matrix of a conservative Markov chain {\color{black} (the alternative being that the chain can be killed and sent to a cemetery state at a rate that depends on the current state)}. The following result gives existing identities that can be found in the literature, from \cite{JI, ivan, kypal} (precise references in its proof). In particular, the approach in \cite{JI, ivan} allows for the slightly more general setting that discounting rate $q$ is replaced by one that  is state dependent.

\begin{prop}\label{prop1}  We have for $x\leq a$, $i,j\in E$ and $q\geq 0$, the three identities

\begin{equation}
\label{id0}
\mathbb{E}_{(x,i)}[{\rm e}^{-q\tau^{+}_{a}};\tau_{a}^{+}<\infty,J_{\tau_{a}^{+}}=j]=[\bDelta _{\boldsymbol{v}}\left( \Phi(q) \right)
{\rm e}^{(a-x) \bplus(q)
%(\bLambda(q)-\Phi(q) \I )
}
\bDelta _{\boldsymbol{v}}\left(  \Phi(q)  \right)^{-1}]_{i,j} ,
\end{equation}

\begin{equation}
\label{id1}
\mathbb{E}_{(x,i)}[{\rm e}^{-q\tau^{+}_{a}};\tau_{a}^{+}<\tau_{0}^{-},J_{\tau_{a}^{+}}=j]=[\W^{(q)}(x)\W^{(q)}(a)^{-1}]_{i,j},
\end{equation}

\begin{equation}
\label{id2}\mathbb{E}_{(x,i)}[{\rm e}^{-q\tau^{-}_{0}};\tau_{0}^{-}<\tau_{a}^{+},J_{\tau_{0}^{-}}=j]=[\Z^{(q)}(x)-\W^{(q)}(x)\W^{(q)}(a)^{-1}\Z^{(q)}(a)]_{i,j},
\end{equation}
where $\bplus(q): = \bLambda(q)-\Phi(q)\I $ is the matrix exponent of the ascending ladder MAP. 	
\end{prop}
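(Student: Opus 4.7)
The plan is to treat each of the three identities in turn, noting that (id1) is effectively built into the very definition of the scale matrix, so the substantive work is collecting (id0) and (id2) from the cited sources and, for (id0), carrying out a short Esscher transform calculation. I would organize the argument around the change of measure \eqref{alaGirsanov} with $\gamma=\Phi(q)$, since this is the common tool linking the three statements and the reason $\bplus(q)$ is the right ``ascending ladder'' exponent.

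For (id0), I would first use spectral negativity to observe $X_{\tau_a^+}=a$ on $\{\tau_a^+<\infty\}$, then rewrite
\[
\mathbb{E}_{(x,i)}\bigl[{\rm e}^{-q\tau_a^+};\tau_a^+<\infty, J_{\tau_a^+}=j\bigr]
=
\frac{v_i(\Phi(q))}{v_j(\Phi(q))}\,{\rm e}^{-\Phi(q)(a-x)}\,\mathbb{P}^{\Phi(q)}_{(x,i)}\bigl(J_{\tau_a^+}=j\bigr),
\]
using the strong Markov form of \eqref{alaGirsanov} applied at $\tau_a^+$ and $\kappa(\Phi(q))=q$. Since $(X,\mathbb{P}^{\Phi(q)})$ drifts to $+\infty$, $\tau_a^+$ is $\mathbb{P}^{\Phi(q)}$-a.s.~finite, and by spatial homogeneity and the definition of $\bLambda(q)$, $\mathbb{P}^{\Phi(q)}_{(x,i)}(J_{\tau_a^+}=j)=[{\rm e}^{(a-x)\bLambda(q)}]_{i,j}$. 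Combining with $\bplus(q)=\bLambda(q)-\Phi(q)\I$ gives exactly the matrix expression claimed.

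For (id1), I would simply cite \cite[Thm.~1]{ivan} (equivalently \cite[Thm.~3]{kypal}), where the scale matrix $\W^{(q)}$ is constructed precisely to represent the two-sided upward exit probabilities discounted at rate $q$; this identity is a cornerstone of the scale matrix theory rather than a consequence to be re-derived.

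For (id2), my plan is to reduce the two-sided downward exit to a one-sided question via the strong Markov property at $\tau_a^+$. Writing $\hat{\Z}^{(q)}(x)_{i,j}:=\mathbb{E}_{(x,i)}[{\rm e}^{-q\tau_0^-};\tau_0^-<\infty,J_{\tau_0^-}=j]$ (with the convention that this vanishes on $\{\tau_0^-=\infty\}$), the decomposition
\[
\mathbb{E}_{(x,i)}[{\rm e}^{-q\tau_0^-};J_{\tau_0^-}=j]
=\mathbb{E}_{(x,i)}[{\rm e}^{-q\tau_0^-};\tau_0^-<\tau_a^+,J_{\tau_0^-}=j]
+\mathbb{E}_{(x,i)}[{\rm e}^{-q\tau_a^+};\tau_a^+<\tau_0^-]\,\hat{\Z}^{(q)}(a)
\]
combined with (id1) yields the claimed identity provided one can identify $\hat{\Z}^{(q)}=\Z^{(q)}$. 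This last identification is the main obstacle, and it is precisely the point at which I would appeal to \cite{JI, ivan}: both of those references establish, in the more general state-dependent discounting setting, that the matrix on the right of \eqref{Z} coincides with $\hat{\Z}^{(q)}$ (their proof proceeds via Laplace transforms in $q$, or equivalently by checking that $\Z^{(q)}$ lies in the domain of the extended generator $\bPsi(D)-q\I$ on $(0,\infty)$ with the correct boundary behaviour at $x=0$ to equate it with the downward exit functional). Once the identification of $\hat{\Z}^{(q)}$ with $\Z^{(q)}$ is in place, (id2) follows by pure algebraic rearrangement of the Markov decomposition above.
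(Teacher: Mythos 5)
Your derivation of \eqref{id0} via the strong Markov form of the Esscher change of measure \eqref{alaGirsanov} at $\tau_a^+$ is correct and is essentially how Theorem~1 of \cite{kypal} is obtained; citing \eqref{id1} directly matches the paper. The gap is in \eqref{id2}, and it is a genuine one: the claimed identification $\hat{\Z}^{(q)}=\Z^{(q)}$ is false. Even in the one-dimensional L\'evy case one has
\[
\mathbb{E}_x\bigl[{\rm e}^{-q\tau_0^-};\tau_0^-<\infty\bigr]
= Z^{(q)}(x) - \frac{q}{\Phi(q)}W^{(q)}(x),
\]
not $Z^{(q)}(x)$, and in the MAP setting $\hat{\Z}^{(q)}(x)$ likewise differs from $\Z^{(q)}(x)$ by a nonzero term proportional to $\W^{(q)}(x)$. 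So the sources you appeal to do not, and cannot, supply the equality you assert.

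What saves your strategy — and what you should actually argue — is that equality is not needed. Your Markov decomposition gives
\[
\mathbb{E}_{(x,i)}\bigl[{\rm e}^{-q\tau_0^-};\tau_0^-<\tau_a^+,J_{\tau_0^-}=j\bigr]
=\bigl[\hat{\Z}^{(q)}(x)-\W^{(q)}(x)\W^{(q)}(a)^{-1}\hat{\Z}^{(q)}(a)\bigr]_{i,j},
\]
and the right-hand side is invariant under replacing $\hat{\Z}^{(q)}(\cdot)$ by $\hat{\Z}^{(q)}(\cdot)+\W^{(q)}(\cdot)\boldsymbol{C}$ for any constant matrix $\boldsymbol{C}$, because $\W^{(q)}(x)\boldsymbol{C}-\W^{(q)}(x)\W^{(q)}(a)^{-1}\W^{(q)}(a)\boldsymbol{C}=0$. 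Thus the result you need is only that $\hat{\Z}^{(q)}(x)-\Z^{(q)}(x)$ is a fixed ($x$-independent) right multiple of $\W^{(q)}(x)$, not that it vanishes. This is precisely the mechanism the paper itself exploits, from a different entry point: it starts from the $\M^{(q)}$ scale matrix of \cite{kypal}, for which the two-sided downward exit identity is available a priori, and shows by integration by parts on its Laplace transform that $\M^{(q)}(x)=\Z^{(q)}(x)-\W^{(q)}(x)\hat{\D}(q)^{T}(q\I-\Q)$, so that the $\hat{\D}(q)$-dependent term cancels telescopically in the two-sided formula. Either route is fine once phrased this way; the assertion $\hat{\Z}^{(q)}=\Z^{(q)}$, as written, is not.
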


\begin{rem}\label{remQ+}
\rm
It is worth noting that as the MAP $(X,J)$ is spectrally negative, the ascending ladder MAP consists of a pure drift of unit speed, which is (possibly) killed at a rate which depends on its modulating chain (corresponding to the ability of the MAP $(X,J)$ to drift to $-\infty$).
\end{rem}

\begin{proof}[Proof (of Proposition \ref{prop1})] The first identity is taken from
of Theorem 1 of \cite{kypal}.
Identity \eqref{id1} is lifted from Theorem 3 (iv) of \cite{kypal} and appears in \cite[pg.1164]{JI}. For the third identity, we also recall from Theorem 3 that there exists matrix function $\M^{(q)}(x)$, defined  by $\I$ for $x\leq0$ and otherwise, for $x>0$, via its Laplace transform, given by\footnote{There is a typo in the statement of Theorem 3 of \cite{kypal}, the Laplace transform of the measure $\M^{(q)}(\dd x)$  should be taken over $(0,\infty)$.}
\begin{eqnarray}
\label{whatisD}
\int_{(0,\infty)}{\rm e}^{-\beta x}\M^{(q)}(\dd x)&=&(\bPsi(\beta)-q\I)^{-1}(I-\beta\hat{\D}(q)^{T})(q\I-\Q)
%\nonumber\\
%&=&[(\bPsi(\beta)-q\I)^{-1}-\beta(\bPsi(\beta)-q\I)^{-1}\hat{\D}(q)^{T}]\,(q\I-\Q)
\end{eqnarray}
such that
\begin{equation}
\label{id3}\mathbb{E}_{(x,i)}[{\rm e}^{-q\tau^{-}_{0}};\tau_{0}^{-}<\tau_{a}^{+},J_{\tau_{0}^{-}}=j]=[\M^{(q)}(x)-\W^{(q)}(x)\W^{(q)}(a)^{-1}\M^{(q)}(a)]_{i,j}.
\end{equation}
holds for $x\in\mathbb{R}$.
The precise meaning of $\hat{\D}(q)$ will turn out to be unimportant for our proof here; however, the reader can refer to Section 4 of \cite{kypal} otherwise.  %Note that the regularity of the upper half line for $X$ means that, when we set $x= a$, we necessarily find $\M^{(	q)}(a)$

\bigskip

Using the Laplace transform of $\W^{(q)}$ given by
\[
\int_{[0,\infty)}{\rm e}^{-\beta x}\W^{(q)}(x)\dd x = (\bPsi(\beta)-q\I)^{-1}\: ,
\]
and integration by parts yields, for $x\geq 0$,
\[
\M^{(q)}(x)-\I =\int^{x}_{0}\W^{(q)}(y){\dd y}~(q\I-\Q)-\W^{(q)}(x)\hat{\D}(q)^{T}(q\I-\Q),
\]
and hence, substituting this expression for $\M^{(q)}$  into \eqref{id3}, we recover
\begin{align*}
\label{id3}
&(\mathbb{E}_{(x,i)}[{\rm e}^{-q\tau^{-}_{0}};\tau_{0}^{-}<\tau_{a}^{+},J_{\tau_{0}^{-}}=j])_{i,j\in E}\\
&=\Z^{(q)}(x)-\W^{(q)}(x)\W^{(q)}(a)^{-1}\Z^{(q)}(a)\\
&\hspace{2cm}-[\W^{(q)}(x)- \W^{(q)}(x)\W^{(q)}(a)^{-1}\W^{(q)}(a)]\hat{\D}(q)^{T}(q\I-\Q)\\
&=\Z^{(q)}(x)-\W^{(q)}(x)\W^{(q)}(a)^{-1}\Z^{(q)}(a),
\end{align*}
as required. The special case of \eqref{id2} for $q=0$ can also be obtained by taking $\alpha=0$ in \cite[Corollary 3]{ivan}.
%On a final note, the reader might also consult \cite{JI} where many of these identities are mentioned.
\end{proof}

% Note that the definition of $\Z^{(q)}$ and the two sided exit formula are consistent with those given in Equation (3) and Corollary 3 of \cite{ivan} for $q=0,$ respectively.
{\color{black}As a small remark following the above proof, we also point to the work of Breuer \cite{Br}, who also addresses results containing special case of  Proposition \ref{prop1} by appealing to generator equations.}

\bigskip

The identities in Proposition \ref{prop1} will prove to be useful, but we will also need to know some basic facts about the smoothness properties of the scale matrices $\W^{(q)}$ and $\Z^{(q)}$. Next, Theorem  \ref{ZWlem}
gathers what we will need to know later on in this paper. Its proof is somewhat technical and therefore deferred to the Appendix. The reader may also consult \cite{ivan} for related results.
Let us denote the states of the modulator for which the ordinate moves as a bounded variation L\'evy process  by $E^{\texttt{bv}}$. Then, $E^{\texttt{ubv}}:= E\backslash E^{\texttt{bv}}$ corresponds to the  states for which we have  an unbounded variation L\'evy process.

\begin{thm}\label{ZWlem}
Fix $q\ge 0$.  \begin{itemize}

\item[(i)] For all $a>0$,  $[\W^{(q)}(0+)\W^{(q)}(a)^{-1}]_{i,j} >0$, for all $j\in E$, if and only if $i\in E^{\emph{\texttt{bv}}}$, otherwise, when $i\in E^{\emph{\texttt{ubv}}}$, we have $ [\W^{(q)}(0+)\W^{(q)}(a)^{-1}]_{i,j} =0$ for all $j\in E$.

Moreover, $\W^{(q)}(0+) =\mathrm{diag}(W^{(q)}_1(0+),\ldots,W^{(q)}_N(0+))$, where $W^{(q)}_i $ is the scale function corresponding to L\'evy process $X^i$, $i\in E$.

\item[(ii)]  The scale matrix $\W^{(q)}(x)$
%has non-negative entries and
is almost everywhere differentiable. Moreover, for all $0\leq x\leq a$,  the almost everywhere derivative  of $\W^{(q)}(x)\W^{(q)}(a)^{-1}$ has an (elementwise) strictly positive and right-continuous derivative on $\mathbb{R}$;

%\item[(iii)] $\Z^{(q)}$ is continuously differentiable, except possibly at zero whee a right-continuous derivative exists.

\item [(iii)]  For $i\in E^{\emph{\texttt{ubv}}}$, $\W^{(q)\prime}_{ii}(0+)=W^{(q+q_i)\prime}_i (0+)$,
where $W^{(q+q_i)}_i $ is the scale function corresponding to L\'evy process $X^i$, $i\in E$, and $q_i=\sum_{j\neq i} q_{i,j}$.

\item[(iv)]  $\Z^{(q)}$ is continuously differentiable, except possibly at $0$ and otherwise is almost everywhere twice differentiable with a right-continuous second derivative.
\end{itemize}
\end{thm}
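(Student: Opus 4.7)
The plan is to combine the probabilistic identities of Proposition \ref{prop1} with initial-value (Tauberian) analysis of the Laplace transform $(\bPsi(\beta)-q\I)^{-1}$ of $\W^{(q)}$. For (i), the identity \eqref{id1} gives $[\W^{(q)}(x)\W^{(q)}(a)^{-1}]_{i,j} = \mathbb{E}_{(x,i)}[{\rm e}^{-q\tau_a^+};\tau_a^+<\tau_0^-, J_{\tau_a^+}=j]$. Letting $x\downarrow 0$: if $i\in E^{\texttt{ubv}}$ then $0$ is regular for $(-\infty,0)$ under $\mathbb{P}_{(0,i)}$, so the probability vanishes, and invertibility of $\W^{(q)}(a)$ forces the $i$th row of $\W^{(q)}(0+)$ to be zero; if $i\in E^{\texttt{bv}}$, then $0$ is irregular for $(-\infty,0)$ and the probability is strictly positive, with irreducibility of $J$ (applied via concatenation of excursion-like segments) giving strict positivity in every column $j$. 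For the diagonal structure of $\W^{(q)}(0+)$, I would apply the initial-value theorem entrywise, $\W^{(q)}(0+)=\lim_{\beta\to\infty}\beta(\bPsi(\beta)-q\I)^{-1}$, and observe that as $\beta\to\infty$ the off-diagonal entries $q_{i,j}G_{i,j}(\beta)/\beta$ vanish while $\psi_i(\beta)/\beta\to d_i$ (the drift of $X^i$, $+\infty$ in the ubv case); a block/diagonal-dominance argument then identifies the limit as $\mathrm{diag}(W_i^{(q)}(0+))$.

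For (ii), a.e.\ differentiability of $\W^{(q)}$ is immediate from the entrywise monotonicity of $\W^{(q)}(x)\W^{(q)}(a)^{-1}$ in $x$ (its entries are probabilities) together with Lebesgue's theorem and invertibility of $\W^{(q)}(a)$. The content lies in the right-continuity and strict positivity of the density. I would establish these by representing $\W^{(q)\prime}(x)\W^{(q)}(a)^{-1}$ in terms of a resolvent/Green kernel for the MAP killed on exiting $[0,a]$ (a standard byproduct of the fluctuation identities), whence right-continuity follows from regularity of this Green density and strict positivity from spectral negativity combined with irreducibility of $J$. An alternative route, closer in spirit to Chapter 8 of \cite{kbook}, is to perform the Esscher transform with $\gamma=\Phi(q)$, reducing to the $q=0$ case in which the ordinator drifts to $+\infty$ and the Wiener--Hopf factorisation of the ascending-ladder MAP furnishes an explicit, well-behaved expression for the derivative.

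For (iii), since $\W^{(q)}_{i,i}(0+)=0$ for $i\in E^{\texttt{ubv}}$ by (i), the initial-value theorem applied one derivative higher gives $\W^{(q)\prime}_{i,i}(0+)=\lim_{\beta\to\infty}\beta^2[(\bPsi(\beta)-q\I)^{-1}]_{i,i}$. I would block-decompose $\bPsi(\beta)-q\I$ according to $E=E^{\texttt{bv}}\cup E^{\texttt{ubv}}$ into blocks $A,B,C,D$ and write the $(i,i)$ entry via the Schur complement as $[(D-CA^{-1}B)^{-1}]_{i,i}$. The off-diagonal blocks $B,C$ have entries $q_{k,\ell}G_{k,\ell}(\beta)\to 0$, while $A(\beta)$ is diagonally dominant with entries of order $\beta d_k$, whence $CA^{-1}B$ is negligible compared to $D$; within $D$, the diagonal entries $\psi_i(\beta)-q_i-q$ dominate the off-diagonals $q_{i,k}G_{i,k}(\beta)$, yielding $[D^{-1}]_{i,i}\sim 1/(\psi_i(\beta)-q_i-q)$. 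The right-hand side is exactly the Laplace transform of $W_i^{(q+q_i)}$ (the scale function of $X^i$ under an additional exponential killing at rate $q_i$), so invoking the initial-value theorem once more produces $\W^{(q)\prime}_{i,i}(0+)=W_i^{(q+q_i)\prime}(0+)$.

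Part (iv) is then essentially a corollary of (ii) and the definition \eqref{Z}: for $x>0$ one differentiates to obtain $\Z^{(q)\prime}(x)=\W^{(q)}(x)(q\I-\Q)$, which is continuous on $(0,\infty)$ because $\W^{(q)}$ is (the only possible discontinuity being the jump at $0$ produced by the bv states, which explains the exception at $0$), and differentiating once more gives $\Z^{(q)\prime\prime}(x)=\W^{(q)\prime}(x)(q\I-\Q)$, which exists almost everywhere and is right-continuous by (ii). The principal obstacle throughout will be the rigorous control of the matrix asymptotics in (i) and (iii), in particular verifying that the Schur-complement correction $CA^{-1}B$ is genuinely $o(1)$ elementwise so as not to contaminate the $\beta^2$-scaled limit in (iii); and the proof of strict positivity and right-continuity of $\W^{(q)\prime}$ in (ii), which demands one go beyond monotonicity and invoke the fluctuation theory of spectrally negative MAPs—hence the authors' deferral of the full argument to the Appendix.
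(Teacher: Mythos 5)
Your approach to parts (i), (iii) and (iv) essentially matches the paper's: reading off \eqref{id1} at $x=0$ and splitting by regularity of $(-\infty,0)$, applying the initial-value theorem to $(\bPsi(\beta)-q\I)^{-1}$ and its $\beta$- and $\beta^2$-scaled limits, and deducing (iv) directly from (ii) and \eqref{Z}. Your Schur-complement treatment in (iii) is in fact somewhat cleaner than the paper's, which simply ``inverts'' the limit of $\beta^{-2}(\bPsi(\beta)-q\I)$ even though that limit has vanishing diagonal entries in the bounded-variation block; your block decomposition makes the passage to the limit legitimate rather than formal.

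The genuine gap is in (ii), which you yourself flag as the principal obstacle, and it is not merely one of polish. First, your claimed shortcut to a.e.\ differentiability via ``entrywise monotonicity of $\W^{(q)}(x)\W^{(q)}(a)^{-1}$ in $x$'' is dubious: by \eqref{id1} the entries are bounded by one, but individual off-diagonal entries need not be monotone in $x$ (for instance when $i\in E^{\texttt{ubv}}$ they vanish at both $x=0+$ and $x=a$). The paper instead obtains differentiability for free from the exponential representation $\W^{(q)}(x)\W^{(q)}(a)^{-1}={\rm e}^{\Phi(q)(x-a)}\bDelta_{\boldsymbol v}(\Phi(q))\exp\bigl(\int_x^a\bLambda_q^*(y)\,\dd y\bigr)\bDelta_{\boldsymbol v}(\Phi(q))^{-1}$, which follows from \eqref{Wqdecomp}--\eqref{pos}. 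Second, and more importantly, neither of your two hinted routes (a Green kernel for the two-sided exit problem, or a Wiener--Hopf factorisation under $\mathbb{P}^{\Phi(q)}$) is developed to the point of delivering right-continuity and strict positivity of the derivative. The paper's key device, which you do not identify, is an excursion-theoretic representation obtained from a Maisonneuve exit system for the reflected MAP, normalised with $\ell=\overline{X}$,
\[
[\W(x)\W(a)^{-1}]_{i,j}=\mathbb{E}_{(x,i)}\!\left[\exp\!\left(-\int_x^a\texttt{N}_{J^+_s}(\bar{\epsilon}>s)\,\dd s\right);\,J^+_a=j\right],
\]
where $\texttt{N}_i$ is the excursion measure from modulator state $i$ and $J^+$ is the ascending-ladder modulator. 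Right-continuity of the derivative follows from right-continuity of $s\mapsto\texttt{N}_{J^+_s}(\bar{\epsilon}>s)$, and strict positivity from $\texttt{N}_i(\bar{\epsilon}>k)>0$, which the paper proves by bounding the expected count of large sub-excursions before $\tau_a^+$ from below by the single-state L\'evy excursion measure on the event $\{\sigma_1>\tau_a^+\}$, using the Esscher transform $\mathbf{P}^{i,\phi_i(|q_{i,i}|)}_0$. Without this ingredient (or a genuinely worked-out substitute), part (ii) is not established, and it is precisely the input that drives the smooth/continuous pasting argument later in the paper.
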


\begin{cor}\label{rem2}  For each $i\in E$, there exists $a(i)\in (0,\infty]$ such that $[\Z^{(q)}(x)1\!\!1]_i$ is not smaller than unity for  $0\le x\le a(i)$.
\end{cor}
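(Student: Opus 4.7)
The plan is to prove the stronger statement that $[\Z^{(q)}(x)1\!\!1]_i\ge 1$ for \emph{all} $x\ge 0$, which is more than enough to produce $a(i)\in(0,\infty]$ as required; in fact the uniform choice $a(i)=\infty$ will work for every $i\in E$. The reduction proceeds in two short steps, the first algebraic and the second drawing on the positivity properties of scale matrices.

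First, I would exploit the fact that $J$ is a conservative continuous-time Markov chain, so that its intensity matrix satisfies $\Q 1\!\!1=0$ and hence $(q\I-\Q)1\!\!1=q\,1\!\!1$. Post-multiplying the defining relation \eqref{Z} by $1\!\!1$ then reproduces exactly \eqref{Zq1}:
\[
[\Z^{(q)}(x) 1\!\!1]_i = 1 + q \int_0^x [\W^{(q)}(y) 1\!\!1]_i \, \dd y, \qquad x\ge 0,\ i\in E.
\]
Since $q\ge 0$, verifying the desired inequality reduces to showing that the row-sum $[\W^{(q)}(y)1\!\!1]_i$ is non-negative for every $y\ge 0$ and every $i\in E$.

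Second, I would invoke componentwise non-negativity of the scale matrix $\W^{(q)}(y)$ itself on $[0,\infty)$. This is a standard feature of the scale-matrix theory of spectrally negative MAPs; see \cite{kypal, ivan}. It is also latent in what has already been assembled in this section: identity \eqref{id1} expresses each entry of $\W^{(q)}(x)\W^{(q)}(a)^{-1}$ as a sub-probability, hence as a non-negative quantity, and combining this with the explicit form of $\W^{(q)}(0+)$ from Theorem \ref{ZWlem}(i) and the elementwise monotonicity from Theorem \ref{ZWlem}(ii) allows one to propagate non-negativity from the ratio to $\W^{(q)}(y)$ itself, for instance by noting that $\W^{(q)}(y)1\!\!1 = [\W^{(q)}(y)\W^{(q)}(a)^{-1}] \cdot \W^{(q)}(a)1\!\!1$ for $a$ large, where Perron--Frobenius asymptotics render the right-hand factor non-negative.

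Putting the two steps together gives $[\Z^{(q)}(x)1\!\!1]_i\ge 1$ for every $x\ge 0$ and every $i\in E$, so that the corollary holds with $a(i)=\infty$. The only genuinely delicate point in this plan is the extraction of componentwise non-negativity of $\W^{(q)}$ itself, as opposed to merely the ratio $\W^{(q)}(x)\W^{(q)}(a)^{-1}$ directly encoded in \eqref{id1}; everything else is pure algebra.
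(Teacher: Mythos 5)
Your proposal contains a genuine gap, and in fact proves something strictly false. The claim that $\W^{(q)}(y)$ is componentwise non-negative for all $y\geq 0$, and hence that $[\W^{(q)}(y)1\!\!1]_i\geq 0$ for all $y\geq 0$, does not hold for scale \emph{matrices}. This is precisely the structural difference with the scalar theory of spectrally negative L\'evy processes, where the scale function is indeed non-negative on $[0,\infty)$. The paper says this explicitly in the Appendix proof of Theorem~\ref{ZWlem}(ii): ``We recall the observation from \eqref{pos} that $\W_{\Phi(q)}(\infty)$ may have negative entries. Hence, thanks to \eqref{Wqdecomp} and the non-negativity of $\boldsymbol{v}(\Phi(q))$, the matrix $\W^{(q)}(x)$ may have negative entries.'' What is non-negative, by \eqref{id1}, is the \emph{ratio} $\W^{(q)}(x)\W^{(q)}(a)^{-1}$ as a sub-probability, but this does not propagate to $\W^{(q)}(y)$ itself. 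Your attempted factorisation $\W^{(q)}(y)1\!\!1 = [\W^{(q)}(y)\W^{(q)}(a)^{-1}]\cdot\W^{(q)}(a)1\!\!1$ fails at exactly this point: the right-hand factor $\W^{(q)}(a)1\!\!1$ need not be non-negative for large $a$, since $\W_{\Phi(q)}(\infty)$ can have negative entries.

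Worse, even the weaker assertion that the row sums $[\W^{(q)}(y)1\!\!1]_i$ stay non-negative for all $y$ is false, which is why the corollary has to be stated locally. The paper's Section~4 gives a concrete counterexample: for the two-state MAP taken from \cite{ivanovs} with $q=1.5$, the row sum $[\W^{(q)}1\!\!1]_2(x)$ becomes negative for $x>0.87$, so $[\Z^{(q)}(x)1\!\!1]_2$ eventually decreases below $1$ and one has $0.87<a(2)<\infty$, not $a(2)=\infty$. So the conclusion you are aiming for, $a(i)=\infty$ for every $i$, is simply not true in general. The paper's proof deliberately avoids any global positivity claim: it uses Theorem~\ref{ZWlem}(i) and (iii) together with \cite[Lem.~3.1, Lem.~3.2]{KKR} to show that $[\W^{(q)}1\!\!1]_i$ is strictly positive only in a right neighbourhood of $0$ (via $W^{(q)}_i(0+)>0$ in the bounded-variation case and $W^{(q+q_i)\prime}_i(0+)>0$ in the unbounded-variation case), and then concludes $a(i)>0$ purely from continuity of $\Z^{(q)}$ and $[\Z^{(q)}(0)1\!\!1]_i=1$. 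Your first algebraic step agrees with the paper, but the second step would need to be replaced by this local argument.
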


\begin{proof} By Theorem \ref{ZWlem} (i), $[\W^{(q)}(0+)1\!\!1]_i=W^{(q)}_i(0+)$ for  $i\in E$ and, from Theorem \ref{ZWlem} (iii), we have $[\W^{(q)\prime}(0+)1\!\!1]_i=W^{(q+q_i)\prime}_i(0+)$ for $i\in E^{{\texttt{ubv}}}$. Since $W^{(q)}_i(0+)> 0$  for $i\in E^{{\texttt{bv}}}$ and $W^{(q+q_i)\prime}_i(0+)> 0$ for $i\in E^{{\texttt{ubv}}}$ by \cite[Lem.3.1,Lem.3.2]{KKR},  $W^{(q)}_i(x)$ is strictly positive in a neighborhood of 0 in $\mathbb{R}_+$ for all $i\in E$. More precisely, for all $q \geq 0,$ $W_{i}^{(q) \prime}(0+)$ equals $2 / \sigma_i^{2}$  or  $+\infty$ when $\sigma_{i}=0$ by \cite[Lem.3.2]{KKR} when $i\in E^{{\texttt{ubv}}}$. Then, the result follows as $\Z^{(q)}(x)$ is a continuous function of $x\in \mathbb{R}$ and $[\Z^{(q)}(0)1\!\!1]_i=1$.
\end{proof}

\section{Optimal Stopping Problem solution}\label{SectOST}

The following result gives us a relatively complete solution to \eqref{OST}.

\begin{thm}\label{main}
Suppose that $q>0$ and  $f: \mathbb{R}\times E \to (0,\infty)$ is a measurable function. %\eqref{assumption} and
%\eqref{zero}.
For each measurable $g: \mathbb{R}\times E\to [0,\infty)$ define
\begin{equation}
\tau_g=\inf\{u>0:\overline{X}_{u}-X_u>g(\overline{X}_u,\bar{J}_u)\},
\label{taugdef}
\end{equation}
where $g$ is defined as a non-negative solution to  the first order differential equation
\begin{equation}
g^\prime(s,j) = 1-\frac{ f^\prime(s,j)}{f(s,j)} \frac{[\Z^{(q)} (g(s,j)) 1\!\!1]_{j}}{ [ \Z^{(q)\prime} (g(s,j))1\!\!1]_{j}},\qquad j\in E, s\in\mathbb{R}.
\label{gODE}
\end{equation}
  satisfying $g(s,j)\le a(j)$, where $a(j)= \inf\{x> 0:  [\Z^{(q)}(x)1\!\!1]_j \le 1\}$, $j\in E$.
Assume the following;
\begin{itemize}
\item[(i)] {\color{black}at least one solution to \eqref{gODE} exists;} % such that $\lim_{s\downarrow s^* }g(s,j) = +\infty$
%for all $j\in E$;
\item[(ii)] the stopping time $\tau_g$ is almost surely finite for $((X,J),\mathbb{P})$;
\item[(iii)] the function  $f(s,j)$ is continuously differentiable for each $j\in E$;
\item[(iv)] for all $t\geq 0$ and $i,j\in E$,
\begin{equation}
\mathbb{E}_{(x,i)}\left[\int_0^t f(\overline{X}_u,\bar{J}_u)  {\rm e}^{\Phi(q)X_u}\dd u\right]+\mathbb{E}_{(x,i)}\left[\int_0^t f(\overline{X}_u,\bar{J}_u)^2  {\rm e}^{2\Phi(q)X_u}\dd u\right]<\infty.
\label{conditions}
\end{equation}
%\item[(v)]  the limit
%$ \lim_{s\downarrow s^*}f(s,j)  [\Z^{(q)}(s^*-s+g(s,j))1\!\!1]_{j}$ exists if $s^*>-\infty$.
\end{itemize}
Then the optimal stopping problem \eqref{OST} is solved with optimal strategy $\tau_g$ and value function
\begin{equation}
V(x,s,i,j)%=\mathbb{E}_{(x,s,i,j)}[{\rm e}^{-q{\tau_g}}f(\overline{X}_{\tau_g},\bar{J}_{\tau_g})]
=
f(s,j) [\Z^{(q)}(x-s+g(s,j)) 1\!\!1]_{i} \qquad  x\leq s, i,j\in E.
%\left\{
%\begin{array}{ll}
%f(s,j) [\Z^{(q)}(x-s+g(s,j)) 1\!\!1]_{i} & x\leq s, s^*\leq s, i,j\in E,\\
%&\\
%  \big[\bDelta _{\boldsymbol{v}}\left(  \Phi(q)  \right) {\rm e}^{(s^*-x)\bplus(q) }\bDelta _{\boldsymbol{v}}\left(  \Phi(q) \right)^{-1} V(s^*,s^*, \cdot, \cdot)\big]_{i}
%& x\leq s<s^*, i,j\in E,
%\end{array}
%\right.
\label{valuefn}
\end{equation}
%where we understand $V(s^*,s^*, i, i) = \lim_{s\downarrow s^*}V(s^*,s, i, i)$.
\end{thm}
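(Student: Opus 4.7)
The plan is a verification-theorem approach. Define the candidate value function
\[
\hat V(x,s,i,j):=f(s,j)\bigl[\Z^{(q)}(x-s+g(s,j))1\!\!1\bigr]_i,\qquad x\le s,\;i,j\in E,
\]
so that \eqref{valuefn} reads $V=\hat V$. Since $\Z^{(q)}(y)=\I$ for $y\le 0$, I immediately have $\hat V=f(s,j)$ on the stopping set $\{s-x\ge g(s,j)\}$; moreover the representation \eqref{Zq1} together with Corollary \ref{rem2} and the standing constraint $g(s,j)\le a(j)$ give $[\Z^{(q)}(y)1\!\!1]_i\ge 1$ on the relevant range, hence $\hat V\ge f$ throughout. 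Everything else concerns the process $M_t:=e^{-qt}\hat V(X_t,\overline X_t,J_t,\bar J_t)$.

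The central step is to verify that $M$ is a $\mathbb P_{(x,s,i,j)}$-supermartingale, and that $M_{\cdot\wedge\tau_g}$ is in fact a true martingale. In the interior $s-g(s,j)<x<s$, the vector function $y\mapsto\Z^{(q)}(y)1\!\!1$ is $q$-harmonic for the killed MAP: inverting the Laplace transform $\int_0^\infty e^{-\beta y}\W^{(q)}(y)\dd y=(\bPsi(\beta)-q\I)^{-1}$ together with \eqref{Z} and the identity $\Q 1\!\!1=\boldsymbol 0$ yields a MAP-generator equation $(\mathcal L -q)\hat V(\cdot,s,\cdot,j)=\boldsymbol 0$ in the $(x,i)$-variables, which I would make rigorous by the Meyer-Itô formula whose regularity hypotheses are supplied by Theorem \ref{ZWlem}(iv). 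On the reflection boundary $\{x=s\}$, $\dd\overline X_t$ only charges the thin set $\{X=\overline X\}$, on which necessarily $\bar J=J=j$, so the singular local-time part of the drift of $M$ is proportional to
\[
\partial_s\hat V(s,s,j,j)=f'(s,j)[\Z^{(q)}(g(s,j))1\!\!1]_j+f(s,j)[\Z^{(q)\prime}(g(s,j))1\!\!1]_j(g'(s,j)-1),
\]
and this expression is identically zero by the ODE \eqref{gODE} — the Peskir-Ott smooth-fit criterion made precise. Combined with the asymptotic $[\Z^{(q)}(y)1\!\!1]_i=O(e^{\Phi(q)y})$ at infinity and hypothesis \eqref{conditions}, this upgrades $M$ to a bona fide supermartingale, and in fact a true martingale up to $\tau_g$.

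The theorem now follows in two optional-stopping steps. First, because $\hat V\ge f$ and $M$ is a supermartingale, for any a.s.-finite $\mathcal G$-stopping time $\tau$,
\[
\mathbb E_{(x,s,i,j)}[e^{-q\tau}f(\overline X_\tau,\bar J_\tau)]\le\mathbb E_{(x,s,i,j)}[e^{-q\tau}\hat V(X_\tau,\overline X_\tau,J_\tau,\bar J_\tau)]\le\hat V(x,s,i,j),
\]
so $V\le\hat V$. Second, since no local-time contribution arises on $[0,\tau_g]$, the stopped process $M_{\cdot\wedge\tau_g}$ is a genuine martingale; by hypothesis (ii) $\tau_g<\infty$ a.s., and optional stopping together with the identity $\hat V=f$ at the stopping boundary yields $\hat V(x,s,i,j)=\mathbb E_{(x,s,i,j)}[e^{-q\tau_g}f(\overline X_{\tau_g},\bar J_{\tau_g})]$, so $\tau_g$ attains the supremum.

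The chief obstacle is the rigorous Itô-type decomposition of $M$ across the reflection boundary $\{x=s\}$: $\hat V$ is only $C^{1,0}$ in $(x,s)$ there, and the ODE \eqref{gODE} kills the singular drift only in the diagonal state $i=j=\bar j$, so one must invoke the MAMP structure (namely, that $\overline X$ grows only when $X=\overline X$, and at such times $\bar J=J$) to confirm no off-diagonal contribution from modulating-chain transitions. A secondary technical point is the uniform integrability underpinning both optional-stopping steps, for which \eqref{conditions} is precisely the required input given the exponential growth of $\Z^{(q)}$.
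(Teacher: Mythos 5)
Your overall architecture is the same as the paper's: define the candidate $\hat V$, establish $\hat V\ge f$ via Corollary~\ref{rem2} and $g\le a(j)$, show the discounted process is a supermartingale globally and a martingale up to $\tau_g$, and conclude by two optional-stopping arguments (this is precisely Lemma~\ref{verify} in the paper). The smooth/continuous pasting and the Neumann condition on the reflection boundary are also invoked exactly as in the paper's heuristic. However, there is a genuine gap at the one place where the proof actually has to work: your claim that ``inverting the Laplace transform $\int_0^\infty e^{-\beta y}\W^{(q)}(y)\dd y=(\bPsi(\beta)-q\I)^{-1}$'' yields $(\mathcal{L}-q)[\Z^{(q)}(\cdot)1\!\!1]_i=0$ is not a proof. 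For a nonlocal MAP generator with matrix structure, there is no direct route from the Laplace-transform identity defining $\W^{(q)}$ to a pointwise generator equation; the Meyer--It\^o formula gives you a semimartingale decomposition but does not, by itself, identify the finite-variation part as $(\mathcal{L}-q)$ applied to the function, let alone show it vanishes. In the paper this is exactly Proposition~\ref{lem}, whose proof is the most technical step: it uses the exit identity \eqref{W0} (established by decomposing over the jump times of $J$ and appealing to Theorem~\ref{ZWlem}(i)), then builds the martingale $e^{-q(t\wedge\tau_{0,a})}[\Z^{(q)}(X_{t\wedge\tau_{0,a}})1\!\!1]_{J_{t\wedge\tau_{0,a}}}$ from the fluctuation identities, applies the Eisenbaum--Kyprianou It\^o formula together with the explicitly compensated martingale of Lemma~\ref{mmg}, and finally uses a resolvent-density argument to deduce $H_i\equiv 0$ on $[0,\infty)$ with right-continuity closing the a.e.\ gap.

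A second, smaller, gap: you establish harmonicity only in the interior $s-g(s,j)<x<s$, but your first optional-stopping inequality ($\hat V\ge V$) requires the supermartingale property to hold globally, including after the process leaves the continuation region. You therefore also need that the $\dd t$ drift is $\le 0$ on $\{Y_t<0\}$; the paper supplies this via the second part of Proposition~\ref{lem}, namely $H_i(x)=-q<0$ for $x<0$, which is a one-line computation once $\Z^{(q)}\equiv\I$ on $(-\infty,0]$ is noted but must still be stated. Your integrability remarks invoking \eqref{conditions} and the $O(e^{\Phi(q)y})$ growth of $\Z^{(q)}$ are in line with what Lemma~\ref{mmg} and the verification of the $L_1$- and $L_2$-isometries accomplish in the paper, though they remain at the level of an outline.
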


\begin{rem}\rm {\color{black}Our insistence that $q>0$ is not necessary and, in principle, one may also consider the case $q = 0$. Additionally, one may also consider the case that discounting at rate $q t$ is replaced by discounting at rate  $\int_0^t \eta_{J_s}\dd s$, where $(\eta_i,i\in E)$ are different rates of discounting depending on the modulating state. The latter is also equivalent to setting $q =0$ and including in the definition of $(X, J)$ the possibility that $J$ is non-conservative. It is likely that this slightly more general structure nonetheless preserves the identities in e.g. Proposition \ref{prop1} as well as the subsequent analytical properties, albeit a careful audit of existing literature being needed. In this respect, we claim that our approach will also work suitably well for optimal stopping problems of the form

\[
V(x,s,i,j)=\sup_\tau \left\{\mathbb{E}_{(x,s,i,j)}[ {\rm e}^{-\int_0^\tau \eta_{J_s}\dd s}f(\overline{X}_\tau,\bar{J}_\tau)]
- \mathbb{E}_{(x,s,i,j)}\left[\int_0^\tau {\rm e}^{-\int_0^s \eta_{J_u}\dd u} c(\overline{X}_u,  \bar{J}_u){\rm d}u \right]\right\},
\]
for $s\geq x$, $ i,j\in E$, and  a suitable choice of $c$, e.g. uniformly bounded and continuous.
Moreover, again with appropriate assumptions, we claim that the methods we present will also allow one to handle the functions $f$ and $c$ depending on the full Markov process  $(X, \overline{X}, J, \bar{J})$.
}
\end{rem}

%\begin{rem}\rm
%Condition (v) of Theorem \ref{main} necessarily requires that $\lim_{s\downarrow s^*}g(s,j) = \infty$. Otherwise, it would follow that $V(s^*,s^*, j, j)=\lim_{s\downarrow s^*}f(s,j)  [\Z^{(q)}(s^*-s+g(s,j))1\!\!1]_{j}=0$. This would mean that the optimal strategy from the point of issue $x = s=s^*$ is to stop immediately. This is clearly not optimal as by just waiting a small amount of time and then stopping, the expected gain will be strictly positive.
%\end{rem}

%\begin{rem}\rm In the case that the underlying process is a spectrally negative  L\'evy process,   the quantity  $Z^{(q)}/Z^{(q)\prime}$ (here $Z^{(q)}$ is the second scale function for the  spectrally negative L\'evy process), which appears in   \eqref{gODE}, corresponds to the expected discounted dividends with double reflection; see e.g. \cite{av2} Section 6 and references therein.
%In practice the notion of the scale matrix is robust enough to handle these generalisations. %, however,  we omitted it here as our goal is to illustrate the technical approach more than the breadth of the result.
%\end{rem}

\subsection{Peskir--Ott Heuristic}\label{heuristic}

We will prove Theorem \ref{main} by appealing to a series of lemmas. However, before dealing with those, let us first introduce some basic reasoning which will heuristically explain the core ingredients of the proof  of Theorem \ref{main}.  We refer to this as the  {\it Peskir--Ott heuristic} following the introduction in his PhD thesis \cite{curdin}, in which he outlines similar reasoning for the solution to a large family of optimal stopping problems driven by the maximum of a spectrally negative L\'evy process, which are analogous to those we consider here in the MAP setting. In turn his reasoning was stimulated by the arguments in Peskir \cite{maximality_principle}.

\bigskip

The point of interest in the current context is that reasoning of \cite{curdin} is equally applicable, on account of the fact that we have identified $(X, \overline{X}, J, \bar{J})$ as the natural driving Markov process to \eqref{OST}.
To this end, let us
guess a solution of the form
\begin{equation}
\tau_g=\inf\{u>0:\overline{X}_u-X_u>g(\overline{X}_u,\bar{J}_u)\},
\label{taug}
\end{equation}
where measurable $g: \mathbb{R}\times E\to [0,\infty)$ is to be determined.
On account of the fact that $f(s,j)=0$ for $s<s^*$, it is natural to set $g(s,j) = +\infty$, for $s< s^*$ and $j\in E$.

The value function associated with strategy $\tau_g$, given by
\begin{equation}
V_g(x,s,i,j)= \mathbb{E}_{(x,s,i,j)}
[{\rm e}^{-q\tau_g}f(\overline{X}_{\tau_g},\bar{J}_{\tau_g})].
\label{Vg}
\end{equation}
Note that  $\tau_g< \tau_s^+$ if and only if $\overline{X}_u<s$ for all $u\le \tau_g$. Moreover, we have
\[
\tau_{g}=\inf\{u>0:s-X_u>g(s,j)\}=\inf\{u>0:X_u<s-g(s,j)\} = \tau^-_{s-g(s,j)},
\]
and hence
$\tau_{g}> \tau_s^+$ if and only if $ \tau^-_{s-g(s,j)}> \tau_s^+$.
\bigskip

Conditioning on the minimum of  $\tau_s^+$ and $\tau_g$, and applying Markov property, we find
\begin{eqnarray}
\lefteqn{V_g(x,s,i,j)=\mathbb{E}_{(x,s,i,j)}\left[ {\rm e}^{-q\tau^-_{s-g(s,j)}}f(s,j);\tau^-_{s-g(s,j)}<\tau_s^+ \right]}\notag \\
& \quad \quad \quad \quad \quad \quad \displaystyle{+\mathbb{E}_{(x,s,i,j)}\left[ {\rm e}^{-q\tau^+_s } V_g(s,s,\bar{J}_{\tau_s^+},\bar{J}_{\tau_s^+});\tau_s^+<\tau^-_{s-g(s,j)}    \right]  }\, .
\label{twoterms}
\end{eqnarray}
The first term in \eqref{twoterms}   is otherwise written as
\begin{equation}
f(s,j)  \sum_k \mathbb{E}_{(x,s,i,j)}[{\rm e}^{-q\tau^-_{s-g(s,j)}};\tau^-_{s-g(s,j)}<\tau_s^+, J_{\tau^-_{s-g(s,j)}  } =k   ]\, .
\label{1}
\end{equation}
The second term in \eqref{twoterms} can also be written as
\begin{equation}
 \sum_k \mathbb{E}_{(x,s,i,j)} [{\rm e}^{-q\tau^+_s} V_g(s,s,k,k);\tau_s^+<\tau^-_{s-g(s,j)},J_{\tau^+_s} =k] \, .
 \label{2}
\end{equation}
Both expectations in \eqref{1} and \eqref{2} can be identified from Proposition \ref{prop1}. More precisely,
\begin{align}
&	V_g(x,s,i,j)\notag\\
	& \displaystyle{ =f(s,j)\sum_k   [\Z^{(q)}(x-s+g(s,j))-\W^{(q)}(x-s+g(s,j))\W^{(q)}(g(s,j))^{-1}\Z^{(q)}(g(s,j)) ]_{i,k} } \nonumber\\
	& \quad \quad \quad \quad   \displaystyle{+\sum_k  [\W^{(q)}(x-s+g(s,j))\W^{(q)}(g(s,j))^{-1}]_{i,k}  V_g(s,s,k,k)    } \nonumber\\
	& \displaystyle{ =f(s,j) [\Z^{(q)}(x-s+g(s,j))1\!\!1-\W^{(q)}(x-s+g(s,j))\W^{(q)}(g(s,j))^{-1}\Z^{(q)}(g(s,j))1\!\!1 ]_{i} } \nonumber\\
	& \quad \quad \quad \quad   \displaystyle{+ \left[[\W^{(q)}(x-s+g(s,j))\W^{(q)}(g(s,j))^{-1}  V_g(s,s,\cdot,\cdot) ]  \right]_i } \; . \nonumber\\
 \label{V}
 \end{align}

With this general form of $V_g$, it is customary (cf. \cite{shep1, shep2, AK, curdin, KO, O}) to optimise over the possible choices by invoking  one condition at the point of reflection of $\overline{X} - X$ and one of two possible conditions that best describe the pasting of the value function onto the gain function.
\bigskip

We start with the pasting principle. In \cite{AAP} and then more generally in \cite{Alili}, it was noted that, for optimal stopping problems driven by L\'evy processes, the formulation of the pasting principle was dictated by the regularity of the stopping region for the underlying L\'evy process when issued on its boundary. Quite simply, if the stopping region is irregular in this respect, then a principle of continuous pasting is needed. In the case of regularity, a principle of smooth fit is needed.

\bigskip

In the current setting, we need to take account of the two different types of path variation which can occur in the ordinate among the different modulator states.
In the notation of \eqref{e:MAP F}, we will work with a principle of continuous fit for those $i\in E$ for which $\psi_i$ corresponds to a bounded variation L\'evy process, i.e., $i\in E^{\texttt{bv}}$, and a principle of smooth fit for $i\in  E^{\texttt{ubv}}$, those states of the modulator for which the ordinate moves as an unbounded variation L\'evy process. That is to say, we will insist on
\[
\lim_{x\downarrow s-g(s,j)}V_g(x, s, i,j) = f(s,j)\qquad \text{ for } i \in E^{\texttt{bv}}, s\in\mathbb{R},
\]
and
\begin{equation} \label{smooth}
\lim_{x\downarrow s-g(s,j)} \frac{\partial V_g}{\partial x} (x,s,i,j)=\frac{\partial}{\partial x}f(s, j)= 0\qquad \text{ for }i \in E^{\texttt{ubv}}s\in\mathbb{R}.
\end{equation}

Thanks to parts  (ii) and (iii) of Theorem \ref{ZWlem}, we can now easily verify that for $i\in E^{\texttt{bv}}$, $j\in E$ and $s\in\mathbb{R}$,
\begin{align}
&\lim_{x\downarrow s-g(s,j)}	V_g(x,s,i,j)	\notag\\
&
=f(s,j) [1\!\!1-\W^{(q)}(0+)\W^{(q)}(g(s,j))^{-1}\Z^{(q)}(g(s,j))1\!\!1 ]_{i}  \nonumber\\
	& \quad \quad \quad \quad   \displaystyle{+ \left[\W^{(q)}(0+)\W^{(q)}(g(s,j))^{-1}V_g(s,s,\cdot,\cdot)  \right]_i }\notag\\
	&=f(s,j) \notag\\
	&- \sum_{k\in E}[\W^{(q)}(0+)\W^{(q)}(g(s,j))^{-1}]_{i,k}\left[f(s,j)[\Z^{(q)}(g(s,j))1\!\!1 ]_k- V_g(s,s,k,k)\right]\; . \label{V0}
 \end{align}
Hence, in order to respect the continuous pasting principle for $i\in E^{\texttt{bv}}$,  providing $\texttt{card}(E^{\texttt{bv}}) >0$, it would be sufficient to  enforce the requirement that
\begin{equation}
f(s,j)[\Z^{(q)}(g(s,j))1\!\!1]_k = V_g(s,s,k,k), \qquad k\in E, s\in\mathbb{R}.
\label{enforced}
\end{equation}
%If $\texttt{card}(E^{\texttt{bv}}) =0$, then continuous pasting is automatically verified.

\bigskip

On the other hand, if  $\texttt{card}(E^{\texttt{ubv}}) >0$, we need to be sure that for $i\in E^{\texttt{ubv}}$ that smooth pasting is possible under the assumption \eqref{enforced}.
To this end, note that, for $i\in E^{\texttt{ubv}}$,  $j\in E$ and $s\in\mathbb{R}$,
\begin{align*}
&\lim_{x\downarrow s-g(s,j)} \frac{\partial V_g}{\partial x} (x,s,i,j)\\
&=f(s,j)[\W^{(q)}(0+)(q\I -\Q)1\!\!1]_i  -f(s,j)  [\W^{(q) \prime}(0+)  \W^{(q)}(g(s,j))^{-1} \Z^{(q)}(g(s,j))1\!\!1]_{i} \\
&  +[\W^{(q) \prime}(0+) \W^{(q)}(g(s,j))^{-1}][ V_g(s,s,\cdot,\cdot)]_{i}\\
&=qf(s,j)\sum_{k\in E^{\texttt{bv}}} [\W^{(q)}(0+)]_{i,k}  \\
& - \sum_{k\in E}[\W^{(q) \prime}(0+) \W^{(q)}(g(s,j))^{-1}]_{i,k}
\left[f(s,j)[\Z^{(q)}(g(s,j))1\!\!1]_k -  V_g(s,s,k,k)\right].
\end{align*}
Thanks to Theorem \ref{ZWlem}, once assumption \eqref{enforced} is enforced, we also see that smooth pasting holds if and only if $i\in E^{\texttt{ubv}}$.

\bigskip

Now that it is clear that  \eqref{enforced} is a naturally occurring condition to satisfy the folklore of continuous and smooth pasting principles, we can substitute it into \eqref{V} and  get
\begin{equation}
V_g(x,s,i,j)=  f(s,j)  [\Z^{(q)}(x-s+g(s,j))1\!\!1]_{i}, \qquad i\in E, x\leq s.
\label{simpleV}
\end{equation}
%{\color{red} Since we have enforced} $g(s,j) = +\infty$ for $s\leq s^*$, we can easily see from \eqref{id0}, Remark \ref{remQ+} and the Markov property that
%\begin{equation}
%V_g(x,s,i,j)=[\bDelta _{\boldsymbol{v}}\left( \Phi(q) \right)
%{\rm e}^{(s^* - x)\bplus(q)}
%\bDelta _{\boldsymbol{v}}\left(  \Phi(q)  \right)^{-1} V_g(s^*,s^*,\cdot,\cdot)]_{i}\, ,
%\label{belows*1}
%\end{equation}
%where we assume
%\begin{equation}
%V_g(s^*,s^*,j,j) := \lim_{s\downarrow s^*}V_g(s^*,s,j,j) = \lim_{s\downarrow s^*}f(s,j)  [\Z^{(q)}(s^*-s+g(s,j))1\!\!1]_{j}
%\label{belows*2}
%\end{equation}
%exists.
Our heuristic reasoning has now produced a candidate value function which satisfies  \eqref{taug}, \eqref{Vg} as well as \eqref{enforced}.
\bigskip

Again referring to the historical treatment \cite{shep1, shep2, AK, curdin, KO, O}, another important feature of the Peskir--Ott heuristic for this family of optimal stopping problems (at least in the spectrally negative setting) is that a Neumann condition must hold corresponding the process of reflection of the ordinate in its running maximum.  More precisely, now working with the assumption \eqref{enforced} so that $V_g$ respects \eqref{simpleV} it would be typical to assume that
\begin{equation}
\left.\frac{\partial}{\partial s} V_g(x,s,i,i)\right|_{x \uparrow s}=0, \qquad i\in E, s\in\mathbb{R}.
\label{neumann1}
\end{equation}
A simple differentiation yields
\begin{align}
\left.\frac{\partial}{\partial s} V_g(x,s,i,i)\right|_{x \uparrow s}&=
  f^\prime(s,j)  [\Z^{(q)} (g(s,j)) 1\!\!1]_{i}  \notag\\
 &\hspace{1cm}	 +   f(s,j) (g'(s,j)-1) [\Z^{(q)\prime} (g(s,j)) 1\!\!1]_{i}\, ,
\label{neumann2}
\end{align}
for $i\in E$, providing $f(s,j)$ is continuously differentiable for $j\in E$.

Thus insisting on \eqref{neumann1} yields in \eqref{neumann2} that the unknown barrier $g(s,j)$, $s\in \mathbb{R}$ and $j\in E$, satisfies the differential equation
\begin{eqnarray}
g^\prime(s,j) &= & 1-\frac{ f^\prime(s,j)}{f(s,j)} \frac{[\Z^{(q)} (g(s,j)) 1\!\!1]_{j}}{ [ \Z^{(q)\prime} (g(s,j)) 1\!\!1]_{j}} \nonumber\\
&= &
1-\frac{ f^\prime(s,j)}{f(s,j)} \frac{[\Z^{(q)} (g(s,j)) 1\!\!1]_{j}}{ [ \W^{(q)} (g(s,j))\,(q\I-\Q)1\!\!1]_{j}} \nonumber \\ %= 1-\frac{ f^\prime(s,j)}{f(s,j)} \frac{ \sum_k [\Z^{(q)} (g(s,j))]_{jk}}{\sum_k [ \Z^{(q)\prime} (g(s,j))]_{jk}}
&= &
1-\frac{ f^\prime(s,j)}{f(s,j)} \frac{[\Z^{(q)} (g(s,j)) 1\!\!1]_{j}}{ [ q\W^{(q)} (g(s,j)) 1\!\!1]_{j}} \qquad \qquad j\in E, s\in\mathbb{R}. \label{g}
\end{eqnarray}
%In order to match our enforced choice $g(s,j) = +\infty$ for $s\leq s^*$, we also need $\lim_{s\downarrow s^*}g(s,j) = +\infty$, for all $j\in E$.

\bigskip

In conclusion, rehearsing the Peskir--Ott heuristic in the current context  means that we need to assume the classical assumptions of smooth and continuous pasting, depending on the modulator, which yields the simple proposed form of the optimal solution \eqref{simpleV}, as well as the classical enforcement of the Neumann condition, which  pins down the unknown optimal threshold curve $g$ in the form of the system of ODEs \eqref{g}.
As is now classical in the theory of optimal stopping, we proceed to take this informed guess for the analytic structure for the solution and verify directly the proposed solution is indeed correct; in other words, we use the method of {\it guess and verify}.

\subsection{Verification of Optimality}

In developing the proof on the basis of the guess and verify method, we
need the following Lemma (which is analogous to Lemma 11.1 in \cite{kbook}) that gives us simple criteria to verify.

\begin{lem}\label{verify}
Suppose that the conditions of Theorem \ref{main} hold. Then the pair $(\tau_g, V_g)$ attains the optimal value of the optimal stopping problem \eqref{OST}, if the following three conditions hold:

\begin{itemize}
\item[(i)] For all $x\leq s$ and $i,j\in E$,
\[
V_g(x,s,i,j) = \mathbb{E}_{(x,s,i,j)}[{\rm e}^{-q \tau_g} f(\overline{X}_{\tau_g}, \bar{J}_{\tau_g})];
\]
\item[(ii)]For all $x\leq s$ and $i,j\in E$,
\[
V_g(x,s,i,j) \geq f(s,j);
\]

\item[(iii)] The process
\[
{\rm e}^{-q t}V_g(X_t,\overline{X}_t,J_t,\bar{J}_t), \qquad t\geq 0
\]
is a right-continuous supermartingale.
\end{itemize}
\end{lem}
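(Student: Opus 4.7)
The plan is a standard verification argument establishing $V=V_g$ through two opposing inequalities, leveraging each of the three conditions in turn.

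First, I would use condition (i) together with the hypothesis of Theorem \ref{main} that $\tau_g$ is almost surely finite for $((X,J),\mathbb{P})$. Since $V(x,s,i,j)$ is the supremum of $\mathbb{E}_{(x,s,i,j)}[{\rm e}^{-q\tau}f(\overline{X}_\tau,\bar{J}_\tau)]$ over all almost surely finite $\mathcal{G}_t$-stopping times and $\tau_g$ lies in this class, condition (i) immediately yields the ``easy'' bound $V(x,s,i,j)\ge V_g(x,s,i,j)$.

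For the reverse inequality, I would fix an arbitrary almost surely finite $\mathcal{G}_t$-stopping time $\tau$ and apply the optional sampling theorem to the right-continuous non-negative supermartingale in condition (iii) with the bounded stopping time $\tau\wedge t$. The càdlàg nature of $(X,\overline{X},J,\bar{J})$ and right-continuity of the filtration generated by $(X,J)$ make this application standard, producing
\[
V_g(x,s,i,j)\;\ge\;\mathbb{E}_{(x,s,i,j)}\!\left[{\rm e}^{-q(\tau\wedge t)}V_g(X_{\tau\wedge t},\overline{X}_{\tau\wedge t},J_{\tau\wedge t},\bar{J}_{\tau\wedge t})\right].
\]
Invoking condition (ii), bounding $V_g\ge f\ge 0$, and restricting the right-hand expectation to the event $\{\tau\le t\}$ (which only decreases it, by non-negativity) gives
\[
V_g(x,s,i,j)\;\ge\;\mathbb{E}_{(x,s,i,j)}\!\left[{\rm e}^{-q\tau}f(\overline{X}_\tau,\bar{J}_\tau);\,\tau\le t\right].
\]

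Finally, I would let $t\to\infty$. Because $\tau$ is almost surely finite, $\{\tau\le t\}\uparrow\{\tau<\infty\}$ has full probability, and monotone convergence (using $f>0$) delivers
\[
V_g(x,s,i,j)\;\ge\;\mathbb{E}_{(x,s,i,j)}\!\left[{\rm e}^{-q\tau}f(\overline{X}_\tau,\bar{J}_\tau)\right].
\]
Taking the supremum over all a.s.\ finite $\tau$ yields $V_g(x,s,i,j)\ge V(x,s,i,j)$, which combined with the first inequality gives the desired equality and identifies $\tau_g$ as an optimal stopping time.

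The main step requiring care is the optional sampling application in condition (iii), which depends crucially on right-continuity of the supermartingale; the passage to the limit and all other bookkeeping are routine thanks to the non-negativity of $f$ and $V_g$, so no uniform integrability or integrability assumption beyond those already present in Theorem \ref{main} is needed for this verification step itself.
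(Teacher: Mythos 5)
Your proof is correct and matches the paper's argument almost exactly: condition (i) gives $V\ge V_g$ since $\tau_g$ is an admissible stopping time; condition (iii) with optional sampling at $\tau\wedge t$ gives $V_g(x,s,i,j)\ge\mathbb{E}_{(x,s,i,j)}[{\rm e}^{-q(\tau\wedge t)}V_g(X_{\tau\wedge t},\overline{X}_{\tau\wedge t},J_{\tau\wedge t},\bar{J}_{\tau\wedge t})]$; condition (ii) reduces this to the gain function; and the $t\to\infty$ passage gives $V_g\ge V$. The only cosmetic difference is the handling of that final limit: the paper invokes Fatou directly on the chain $V_g\ge\mathbb{E}[{\rm e}^{-q(\tau\wedge t)}f(\overline X_{\tau\wedge t},\bar J_{\tau\wedge t})]$, while you first drop to the event $\{\tau\le t\}$ (valid by non-negativity) and then use monotone convergence. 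These are equivalent here because on $\{\tau\le t\}$ one has $\tau\wedge t=\tau$ exactly, so both approaches produce the same limit; no new integrability assumption is needed in either case.
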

\begin{proof} Assumption (i) implies that
\begin{equation}
V_g(x,s,i,j) = \mathbb{E}_{(x,s,i,j)}[{\rm e}^{-q \tau_g} f(\overline{X}_{\tau_g}, \bar{J}_{\tau_g})]
\leq \sup_{\tau} \mathbb{E}_{(x,s,i,j)}[{\rm e}^{-q \tau} f(\overline{X}_{\tau}, \bar{J}_{\tau})].
\label{lowerbound}
\end{equation}
Assumption (iii), Doob's Optional Sampling Theorem and then assumption (ii) implies that
\begin{align}
V_g(x,s,i,j) &\geq  \mathbb{E}_{(x,s,i,j)}\left[{\rm e}^{-q (t\wedge \tau )}V_g(X_{t\wedge \tau },\overline{X}_{t\wedge \tau },J_{t\wedge \tau },\bar{J}_{t\wedge \tau })\right]\notag\\
&\geq  \mathbb{E}_{(x,s,i,j)}[{\rm e}^{-q (t\wedge \tau )} f(\overline{X}_{t\wedge \tau}, \bar{J}_{t\wedge \tau})],
\label{Doob}
\end{align}
for all $\tau$ in the desired class of stopping times.  Now taking limits as $t\uparrow\infty$ in \eqref{Doob} and using Fatou's Lemma, we obtain the opposite inequality to \eqref{lowerbound} and the lemma is thus proved.
\end{proof}

%Next we prove an intermediary result which will help verify the conditions in Lemma \ref{verify}.
{\color{black}
\begin{rem}\label{necessary}\rm The ODE \eqref{gODE} need not have a unique solution and the above proof hints at why the sufficient conditions in Theorem \ref{main} will force us to single out  a specific one.   Roughly speaking, properties (i) and (ii), although stated as sufficient conditions, they  are known to be necessary conditions for the existence of an optimal solution to \eqref{OST}, in which case, inequality \eqref{Doob} indicates that
\[
\inf_gV_g(x,s,i,j)\geq  \mathbb{E}_{(x,s,i,j)}[{\rm e}^{-q \tau } f(\overline{X}_{ \tau}, \bar{J}_{ \tau})]
\]
must hold. As such, if there are many solutions to  \eqref{gODE}, we will be forced to work with the one that produces the minimisation above. This is equivalent to what Peskir \cite{maximality_principle} refers to as {\it maximality principle}.%, which comes from \eqref{Doob}.
 \end{rem}
}

In order to address criterion (iii) in our use of Lemma \ref{verify}, we need to prove  two  intermediary results.
Below, $\W_{\Phi(q)}$  denotes the matrix that plays the role of $\W^{(0)}$ under $\mathbb{P}^{\Phi(q)}$ defined in \eqref{alaGirsanov}. Also, let $F_{i,j}$ be the respective distributions of the random variables $U_{i,j}$.

\begin{lem}
\label{mmg} The process
\begin{align}
& m_{t}(X, J)\notag\\&:=  \int_{0}^{t} [ \Z^{(q)\prime}\left(X_{u-}\right) 1\!\!1]_{J_u} \sigma_{J_u} {\rm d} B_{u}
%+\int_{0+}^{t} \Z^{(q)\prime}\left(X_{u-}\right) 1\!\!1_{J_u} d X_{u}
\notag\\
&+\sum_{0<u \leq t}\mathbf{1}_{(J_{u-} = J_u)}\left([( \Z^{(q)}(X_{u}) -\Z^{(q)}(X_{u-}))1\!\!1]_{J_u}
-\mathbf{1}_{(\Delta X_{u} \geq-1)}\Delta X_{u} [\Z^{(q)\prime}\left(X_{u-}\right) 1\!\!1]_{J_u} \right) \notag\\
&-\int_{0}^{t} \int_{(-\infty, 0)}\left([\Z^{(q)}\left(X_{u-}+y\right)1\!\!1]_{J_u}-[\Z^{(q)}\left(X_{u-}\right)1\!\!1]_{J_u}-y \mathbf{1}_{\{y \geq-1\}}  [\Z^{(q)\prime}(X_{u-})1\!\!1]_{J_u} \right) \nu_{J_u}({\rm d}y) {\rm d}u\notag\\
&+\sum_{0<u \leq t}\mathbf{1}_{(J_{u-} \neq J_u)}\left([\Z^{(q)}(X_{u})1\!\!1]_{J_u} -  [\Z^{(q)}(X_{u-})1\!\!1]_{J_{u-}}\right)\notag\\
&-\int_0^t \sum_{k = 1}^N\int_{(-\infty,0)}\left([\Z^{(q)}\left(X_{u-}+y\right)1\!\!1]_{k}-[\Z^{(q)}\left(X_{u-}\right)1\!\!1]_{J_{u-}}\right) q_{J_{u-},k}F_{J_{u-},k}({\rm d}y) {\rm d}u,% \qquad t\geq 0.
\label{mcalclem}
\end{align}
for $t\geq 0$, is a martingale.
\end{lem}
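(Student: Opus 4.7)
The plan is to decompose $m_t(X,J)$ into three standard local martingale components arising from the canonical semimartingale decomposition of the MAP, and then upgrade each to a true martingale via integrability. Write $m_t = M^{(c)}_t + M^{(L)}_t + M^{(J)}_t$, where $M^{(c)}_t$ is the Brownian stochastic integral on the first line of \eqref{mcalclem}; $M^{(L)}_t$ is the compensated random-measure integral for the jumps of $X$ while $J$ remains constant (second and third summands, compensated against $\nu_{J_u}(\dd y)\dd u$); and $M^{(J)}_t$ is the compensated random-measure integral for the joint jumps of $(X,J)$ at modulator transitions (last two summands, compensated against $q_{J_{u-},k}F_{J_{u-},k}(\dd y)\dd u$). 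Each is a local martingale by standard semimartingale theory, being precisely the martingale part of, respectively, the Brownian component of the ordinator $X^{J_u}$, the state-wise Poisson random measure of Lévy jumps, and the random measure of modulator transitions together with the accompanying simultaneous jumps $U_{J_{u-},J_u}$ of $X$.

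To upgrade from local to true martingale, I would exploit the exponential growth estimate $|[\Z^{(q)}(x)1\!\!1]_i|, |[\W^{(q)}(x)1\!\!1]_i| \le C{\rm e}^{\Phi(q)x}$ for $x \ge 0$ (with $\Z^{(q)}(x)1\!\!1 = 1\!\!1$, $\W^{(q)}(x) = \boldsymbol{0}$ for $x<0$), which follows from the Laplace transform $\int_0^\infty {\rm e}^{-\beta x}\W^{(q)}(x)\dd x = (\bPsi(\beta) - q\I)^{-1}$ having its rightmost singularity at $\beta = \Phi(q)$, together with the Perron--Frobenius representation of the inversion. Since $\Z^{(q)\prime} = \W^{(q)}(q\I - \Q)$ inherits the same exponential bound and is right-continuous and locally bounded by Theorem~\ref{ZWlem}(iv), combining this with the $L^2$-integrability \eqref{conditions} (pushed through by localising on compact ranges of $\overline{X}$ when necessary) yields finite expected quadratic variation for each of $M^{(c)}, M^{(L)}, M^{(J)}$ up to any finite horizon. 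Hence each is an $L^2$-martingale, and so is the sum $m_t(X,J)$.

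The main obstacle is controlling the small-jump compensator in $M^{(L)}_t$. The integrand is the truncated Taylor remainder $[\Z^{(q)}(X_{u-}+y) - \Z^{(q)}(X_{u-}) - y\mathbf{1}_{\{y\ge -1\}}\Z^{(q)\prime}(X_{u-})]1\!\!1$, which must be $O(y^2)$ as $y\to 0$ in order to be integrable against $\nu_{J_u}(\dd y)$. This $O(y^2)$ control follows from the almost-everywhere twice differentiability of $\Z^{(q)}$ with right-continuous, locally bounded second derivative away from $0$ (Theorem~\ref{ZWlem}(iv)); the exceptional point $X_{u-} = 0$ contributes a time set of Lebesgue measure zero and so does not affect the compensator. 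A parallel but simpler bound controls the compensator in $M^{(J)}_t$ using the boundedness of the modulator rates $q_{i,k}$ and the finiteness of the Laplace transforms $G_{i,k}(\Phi(q))$ of the $U_{i,k}$. This completes the martingale identification.
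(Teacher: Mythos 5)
Your decomposition of $m_t(X,J)$ into Brownian, same-state jump, and modulator-transition jump components, reduction of martingality to isometry/integrability conditions, and use of the exponential bound $|[\Z^{(q)}(x)1\!\!1]_i|\le C\,{\rm e}^{\Phi(q)x}$ coming from \eqref{Wqdecomp} and the Perron--Frobenius structure of $\W_{\Phi(q)}$ is essentially the paper's own argument. Two slips. First, you invoke condition \eqref{conditions} for the moment control, but \eqref{conditions} is a hypothesis on $f(\overline{X}_u,\bar J_u)$ and is irrelevant here: Lemma~\ref{mmg} contains no $f$ and is proved in the paper unconditionally, with the needed finiteness of $\mathbb{E}_{(x,i)}[{\rm e}^{\Phi(q)X_u}]$ and $\mathbb{E}_{(x,i)}[{\rm e}^{2\Phi(q)X_u}]$ extracted directly from the Esscher martingale \eqref{alaGirsanov} and the matrix exponent identity \eqref{psi2}. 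Second, your $O(y^2)$ Taylor-remainder control for the small-jump compensator is right in spirit (the paper leaves \eqref{<oo1} as an exercise), but the delicate case is not the Lebesgue-null time set $\{X_{u-}=0\}$; it is the positive-measure region of $(u,y)$ where $X_{u-}+y<0<X_{u-}$ and $\Z^{(q)}$ flattens to $\I$, so that the twice-differentiability hypothesis does not apply across the jump. There one recovers the required smallness because in unbounded-variation states $\W^{(q)}(0+)=0$ (Theorem~\ref{ZWlem}(i)), which forces $\Z^{(q)}(z)-\I=O(z^2)$ near $0+$, while in bounded-variation states $\int_{|y|<1}|y|\,\nu_i({\rm d}y)<\infty$ and the Lipschitz bound $O(|y|)$ already suffices.
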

\begin{proof}
Note that the jump component of $m(X, J)$ centred by its compensator and thus, the part of $m(X, J)$  that is not a Brownian integral is automatically a martingale as soon as we can show that, for all $x\in\mathbb{R}$, $i\in E$ and $t\geq 0$, the $L_1$-isometry conditions
\begin{align}
&\mathbb{E}_{(x,i)}\Bigg[\int_{0}^{t} %{\rm e}^{-qu}
\int_{(-\infty, 0)}\Bigg|[\Z^{(q)}\left(X_{u-}+y\right)1\!\!1]_{J_u}-[\Z^{(q)}\left(X_{u-}\right)1\!\!1]_{J_u}\notag\\
&\hspace{7cm}-y \mathbf{1}_{\{y \geq-1\}}  [\Z^{(q)\prime}(X_{u-})1\!\!1]_{J_u} \Bigg| \nu_{J_u}({\rm d}y) {\rm d}u\Bigg]<\infty
\label{<oo1}
\end{align}
 and
\begin{align}
\mathbb{E}_{(x,i)}\Bigg[\int_0^t %{\rm e}^{-qu}
\sum_{k = 1}^N\int_{(-\infty,0)}\left|[\Z^{(q)}\left(X_{u-}+y\right)1\!\!1]_{k}-[\Z^{(q)}\left(X_{u-}\right)1\!\!1]_{J_{u-}}\right| q_{J_{u-},k}F_{J_{u-},k}({\rm d}y) {\rm d}u
\Bigg]<\infty
\label{<oo2}
\end{align}
hold.
For the second of these two verifications, note that we can use the  elementwise monotonicity of $\Z^{(q)}$ to  otherwise  write the left-hand side of \eqref{<oo2} as bounded above by
\begin{align}
&2 ||\Q||\mathbb{E}_{(x,i)}\Bigg[\int_0^t {\rm e}^{-qu}\big|{1\!\!1}^{T}\Z^{(q)}\left(X_{u}\right)1\!\!1 \big|{\rm d}u \Bigg],
\label{UB}
\end{align}
where $||Q|| = \sup_{i,j\in E, i\neq j}q_{i,j}$.
From the definition of $\Z^{(q)}$ in \eqref{Z}, we also note that
%\[
%{1\!\!1}^{T}\Z^{(q)}\left(X_{u}\right)1\!\!1 = 1 + q\int_0^x ({1\!\!1}^{T}\W^{(q)}\left(X_{u}\right)1\!\!1)\dd u.
%\]
\bigskip

\[
{1\!\!1}^{T}\Z^{(q)}\left(X_{u}\right)1\!\!1 = 1 + q\int_0^{X_u}  ({1\!\!1}^{T}\W^{(q)}\left(y\right)1\!\!1
)\dd y.
\]
\bigskip

Moreover, recall from equations (16) and (19) in  \cite{kypal}, we can write
\begin{equation}
\label{Wqdecomp}
\W^{(q)}(x) ={\rm e}^{\Phi(q)x}\bDelta _{\boldsymbol{v}}\left( \Phi(q)\right)
\W_{\Phi(q)}(x)
\bDelta _{\boldsymbol{v}}\left( \Phi(q)\right)^{-1},
\end{equation}
where $\Phi(q)$ was defined in \eqref{kinverse}.
%and $\W_{\Phi(q)}$ plays the role of $\W^{(0)}$ under $\mathbb{P}^{\Phi(q)}$, as defined in \eqref{alaGirsanov}.
Note, we can also see from below\footnote{Note that there is a typo in the sign of the exponent  in  \cite{kypal}.} equation (16) in \cite{kypal}, on account of the fact that $((X, J),\mathbb{P}^{\Phi(q)})$ drifts to $+\infty$, there exist a family of sub-stochastic intensity matrices $(\bLambda^*_q(y), y\geq 0)$ such that
\begin{equation}
\W_{\Phi(q)}(x) = \exp\left(\int_x^\infty \bLambda^*_q (y)\dd y\right) \, \W_{\Phi(q)}(\infty) %= \lim_{a\to\infty} \exp\left(\int_x^a \bLambda^*_q (y)\dd y\right)\, M,
\qquad x\geq 0.
\label{pos}
\end{equation}
%{\color{red} where $M$ is a constant matrix}.
The matrix exponential on the right-hand side of \eqref{pos} is a transition semigroup (of a time-inhomogenous Markov chain), which means that its entries are all non-negative.
Then, note that the sign of the entries of $\W_{\Phi(q)}(x)$  may be positive or negative as dictated by $\W_{\Phi(q)}(\infty)$.
%\footnote{We also note that this argument tells us that the entries of $\W^{(q)}(x)$ are non-negative for each $x\geq 0$.}.
%\begin{equation}
%{\color{black}\bDelta _{\boldsymbol{v}}\left( \Phi(q)\right)
%\W_{\Phi(q)}(x)
%\bDelta _{\boldsymbol{v}}\left( \Phi(q)\right)^{-1} = \exp\left(\int_x^\infty \bDelta _{\boldsymbol{v}}\left( \Phi(q)\right)\bLambda^*_q (y)\bDelta _{\boldsymbol{v}}\left( \Phi(q)\right)^{-1}\dd y\right), \qquad x\geq 0,}
%\label{pos2}
%\end{equation}
%hence we can also  write
%\begin{equation}
%{1\!\!1}^{T}\W_{\Phi(q)}(x){1\!\!1} = \exp\left(\int_x^\infty \lambda^*_q (y)\dd y\right), \qquad x\geq 0,
%\label{scalar}
%\end{equation}
%where $\lambda^*_q(y) = {1\!\!1}^{T}\bLambda^*_q (y){1\!\!1}<0$, for $y\geq 0$.
Recalling that the eigenvector $\boldsymbol{v}(\Phi(q))$ is element-wise strictly bounded away from 0 and $\infty$,  we can use \eqref{Wqdecomp} together with \eqref{pos} to deduce that
\[
 |{1\!\!1}^{T}\Z^{(q)}\left(X_{u}\right)1\!\!1 | \leq 1 + C_1\int_0^{X_u} {\rm e}^{\Phi(q)y}\dd y\leq C_2{\rm e}^{\Phi(q)X_u}
\]
for (unimportant) constants  $C_1, C_2>0$. Returning to \eqref{UB}, it is now easy to see that \eqref{<oo2} holds by making use of the exponential change of measure \eqref{alaGirsanov}.

\bigskip

For the sake of brevity we leave the proof of  \eqref{<oo1} as an exercise for the reader, noting that its proof goes along similar lines.

\bigskip
 In order to justify that  $ \int_{0}^{t} [ \Z^{(q)\prime}\left(X_{u-}\right) 1\!\!1]_{J_u} \sigma_{J_u}{\rm d} B_{u}^{J_u}$, $t\geq 0$, is a martingale, it suffices to check that its  mean quadratic variation is finite; that is to say, the necessary $L_2$-isometry condition holds. Using \eqref{Wqdecomp} and \eqref{psi2}, the aforementioned is verified via
\begin{align*}
&\mathbb{E}_{(x,i)}\left[\int_0^t %{\rm e}^{-2q u}
[ \Z^{(q)\prime}\left(X_{u-}\right) 1\!\!1]_{J_u}^2 \sigma^2_{J_u}\dd u\right]=
\mathbb{E}_{(x,i)}\left[\int_0^t %{\rm e}^{-2q u}
[ q\W^{(q)}\left(X_{u-}\right) 1\!\!1]_{J_u}^2 \sigma^2_{J_u}\dd u\right]\\
&\leq q^2\bar\sigma^2\frac{\bar{v}}{\underline{v}} ||\W_{\Phi(q)}||^{2}
\, \mathbb{E}_{(x,i)}\left[\int_0^t%{\rm e}^{-2q u}
\mathbf{1}_{(X_t\geq 0)}{\rm e}^{2\Phi(q) X_u} \dd u\right]\\
&\leq q^2\bar\sigma^2\frac{\bar{v}}{\underline{v}} ||\W_{\Phi(q)}||^2
\int_0^t \mathbb{E}_{(x,i)}%{\rm e}^{-2q u}
\left[
{\rm e}^{2\Phi(q) X_u}
\right]\dd u\\
&=q^2\bar\sigma^2\frac{\bar{v}}{\underline{v}} ||\W_{\Phi(q)}||^2
\int_0^t % {\rm e}^{-2q u}
[{\rm e}^{\bPsi(2\Phi(q))u}1\!\!1]_i \dd u<\infty,
\end{align*}
  where  $||\W_{\Phi(q)}|| = \sup_{i,j\in E}|[\W_{\Phi(q)}(x)]_{i,j}|$,  $\bar\sigma^2 = \max_{i \in E}\sigma_i^2$, $\bar{v} = \max_{i\in E}v_i(\Phi(q))$ and $\underline{v} = \min_{j\in E}v_j(\Phi(q))$, and the final inequality follows again from the martingale on the right-hand side of \eqref{alaGirsanov}. To see why $ \sup_{i,j\in E}|\W_{\Phi(q)}(x)|_{i,j}<\infty$, we recall from the change of measure \eqref{alaGirsanov} that the process $(\mathbb{P}^{\Phi(q)}, X)$ drifts to $+\infty$. Hence, recalling \eqref{id1},
\begin{align*}
0&<\mathbb{P}^{\Phi(q)}_{(x,i)}(\tau_{0}^{-}=\infty,J_{\tau_{a}^{+}}=j) \\
&\hspace{0.5cm}= \lim_{a\to\infty}\mathbb{P}^{\Phi(q)}_{(x,i)}(\tau_{a}^{+}<\tau_{0}^{-},J_{\tau_{a}^{+}}=j)\\
&\hspace{1cm}=\lim_{a\to\infty}[\W_{\Phi(q)}(x)\W_{\Phi(q)}(a)^{-1}]_{i,j}\\
&\hspace{1.5cm}\leq 1,
\end{align*}
which forces $ \sup_{i,j\in E}|\W_{\Phi(q)}(x)|_{i,j}<\infty$.
\end{proof}
We need to define some infinitesimal generators of some of the processes that make up the underlying MAP. To this end, in the case that $i\in E^{\texttt{bv}}$, for convenience, we will write
\[
\psi_{i}(\lambda) = a_i\lambda + \int_{(-\infty,0)}({\rm e}^{\lambda x}-1)\nu_i({\rm d} x), \qquad \lambda\geq 0,
\]
where $a_i>0$ and $\int_{(-\infty,0)}(|x|\wedge 1)\nu_i({\rm d} x)<\infty$, which is always possible for spectrally negative L\'evy processes, cf. equation (8.3) in \cite{kbook}. When $i\in E^{\texttt{ubv}}$, we will instead need to identify the Laplace exponent
\[
\psi_i(\lambda) = a_i\lambda +\frac{1}{2}\sigma^2_i \lambda^2 + \int_{(-\infty,0)} ({\rm e}^{\lambda x} - 1 - x\mathbf{1}_{(|x|<1)})\nu_i({\rm d}x), \qquad \lambda \geq 0,
\]
where $a_i\in \mathbb{R}$, $\sigma_i^2\geq 0$ and $\int_{(-\infty,0)}(|x|^2\wedge 1)\nu_i({\rm d}x)<\infty$. Accordingly, we identify the two associated infinitesimal generators. The first is
\[
\mathcal{A}^if(x)=a_{i} f^{\prime}(x+)+\int_{(-\infty,0)}\left(f(x+y)-f(x)\right) \nu_{i}(\mathrm{d} y),\qquad i\in E^{\texttt{bv}}, x\in\mathbb{R},
\]
for $f\in C^{1,+}_0(\mathbb{R})$, the set of functions which have a right-continuous derivative and which vanish at $-\infty$. The second is
\[
\mathcal{A}^if(x)=a_{i} f^{\prime}(x)+\frac{\sigma_{i}^2}{2} f^{\prime \prime}(x+)+\int_{(-\infty,0)}\left(f(x+y)-f(x)- y f^{\prime}(x) \mathbf{1}_{(|x|<1)}\right)\nu_i({\rm d}x),%qquad ,
\]
for $i\in E^{\texttt{ubv}}$, $x\in\mathbb{R}$ and $f\in C^{2,+}_0(\mathbb{R})$, the space of functions which are continuously differentiable with right-continuous second derivative and which vanish at $-\infty$ ($f(x)\rightarrow 0$  as $x\rightarrow -\infty$).

\bigskip

We also need to introduce the generator that codes the rate at which the Markov chain jumps and causes an additional discontinuity in the ordinate. Define

\[
\mathcal{B} f(x, i)=\sum_{k=1}^{N} q_{i k} \int_{(-\infty,0)}(f(x+y, k)-f(x, i))  F_{i,k}(\mathrm{d} y), \qquad i\in E, x\in\mathbb{R},%=\sum_{\stackrel{k\neq i}{k=1}}^{N} q_{i, k} \int_{-\infty}^{0} f(x+y, k) \mathrm{d} G_{i,k}(y),
\]
where, for $i,j\in E$ we recall that $q_{i,k}$ are the entries of the matrix $\Q$ and $F_{i,j}$ are the respective distributions of the random variables $U_{i,j}$.
\bigskip

\begin{prop}\label{lem} For all  $i\in E$, we have
	\[
	H_i(x): = \mathcal{A}^i([\Z^{(q)}(\cdot)1\!\!1]_{i}) (x) \,  +\mathcal{B}([\Z^{(q)}(\cdot)1\!\!1]_{\cdot}) (x,i)-q[\Z^{(q)}(x)1\!\!1]_{i} = 0
	\]	
for $x\geq0$, and  $H_i(x)<0$ for $x<0$.
%	\[
%	\mathcal{A}^i([\Z^{(q)}(\cdot)1\!\!1]_{i}) (x) \,  +\mathcal{B}([\Z^{(q)}(\cdot)1\!\!1]_{\cdot}) (x,i)-q[\Z^{(q)}(x)1\!\!1]_{i} < 0,
%	\]
\end{prop}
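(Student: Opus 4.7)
The strategy is a semimartingale approach combined with the identities of Proposition \ref{prop1}. The case $x<0$ is immediate: there $\Z^{(q)}(x)=\I$ is constant in $x$, so $u(x,i):=[\Z^{(q)}(x)1\!\!1]_i=1$ and $\mathcal{A}^i$ vanishes on constants. For the operator $\mathcal{B}$, because $U_{i,k}\le 0$ always, for $x<0$ and $y\in(-\infty,0]$ we have $u(x+y,k)=1=u(x,i)$, so $\mathcal{B}([\Z^{(q)}(\cdot)1\!\!1]_\cdot)(x,i)=0$. Hence $H_i(x)=-q<0$ under the standing hypothesis $q>0$.

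For $x\ge 0$, I apply an Itô-type change-of-variable formula to ${\rm e}^{-qt}u(X_t,J_t)$. The smoothness of $\Z^{(q)}$ asserted in Theorem \ref{ZWlem}(iv), together with the decomposition of $(X,J)$ into its Brownian, continuous-drift, ordinate-Poisson-jump and modulator-jump pieces, produces the semimartingale identity
\begin{equation*}
{\rm e}^{-qt}u(X_t,J_t)=u(X_0,J_0)+\widetilde M_t+\int_0^t{\rm e}^{-qs}H_{J_s}(X_s)\,{\rm d}s,
\end{equation*}
where $\widetilde M_t=\int_0^t{\rm e}^{-qs}\,{\rm d}m_s(X,J)$ is a local martingale: the drift terms produced by Itô's formula collapse to exactly $\mathcal{A}^{J_s}([\Z^{(q)}(\cdot)1\!\!1]_{J_s})(X_s)+\mathcal{B}([\Z^{(q)}(\cdot)1\!\!1]_\cdot)(X_s,J_s)-qu(X_s,J_s)=H_{J_s}(X_s)$, while the remaining centred Brownian and jump contributions are precisely those assembled in the definition of $m_t(X,J)$ in Lemma \ref{mmg}. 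Fixing $0\le x<a$ and $T:=\tau_0^-\wedge\tau_a^+$, on $\{t<T\}$ the ordinate lies in $[0,a]$ and $u$ stays bounded, so $\widetilde M_{\cdot\wedge T}$ is a true (bounded) martingale and Doob's optional stopping applies. Splitting the expectation according to $\{\tau_a^+<\tau_0^-\}$ (where $X_T=a$ and $u(X_T,J_T)=[\Z^{(q)}(a)1\!\!1]_{J_T}$) and $\{\tau_0^-<\tau_a^+\}$ (where $X_T<0$ and $u(X_T,J_T)=1$), and invoking identities \eqref{id1} and \eqref{id2}, the two $\W^{(q)}(x)\W^{(q)}(a)^{-1}\Z^{(q)}(a)1\!\!1$ contributions cancel, leaving
\begin{equation*}
u(x,i)=\mathbb{E}_{(x,i)}\!\left[{\rm e}^{-qT}u(X_T,J_T)\right]=u(x,i)+\mathbb{E}_{(x,i)}\!\left[\int_0^T{\rm e}^{-qs}H_{J_s}(X_s)\,{\rm d}s\right].
\end{equation*}
Consequently the expected integral of $H$ against the occupation measure of $(X,J)$ on $[0,T]$ vanishes.

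The final step is to upgrade this integrated identity to the pointwise statement $H_i(x)=0$. Applying the strong Markov property at the stopping time $t\wedge T$, and using that the analogous identity for the tail integral starting from $(X_{t\wedge T},J_{t\wedge T})$ also vanishes on $\{t<T\}$, yields
\begin{equation*}
\mathbb{E}_{(x,i)}\!\left[\int_0^{t\wedge T}{\rm e}^{-qs}H_{J_s}(X_s)\,{\rm d}s\right]=0
\end{equation*}
for every $t\ge 0$ and every starting point $(x,i)$ with $0<x<a$. Dividing by $t$ and letting $t\downarrow 0$, the right-continuity of the paths of $(X,J)$, the right-continuity in $x$ of $H_i(\cdot)$ inherited from Theorem \ref{ZWlem}(iv), and bounded convergence (on $\{t<T\}$ the integrand is bounded) give $H_i(x)=0$ for every $x>0$ and $i\in E$; the value at $x=0$ then follows by the same right-continuity. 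The main technical obstacle is the careful legitimation of the Itô decomposition and of the optional-stopping step across the dichotomy $E^{\texttt{bv}}/E^{\texttt{ubv}}$, which is underpinned by Theorem \ref{ZWlem}(iv) and by the integrability bounds already established in the proof of Lemma \ref{mmg}.
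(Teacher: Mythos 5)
Your proof is correct, and it follows the same template as the paper's (Itô decomposition of ${\rm e}^{-qt}[\Z^{(q)}(X_t)1\!\!1]_{J_t}$ into a martingale plus a $\dd t$-integral with integrand $H_{J_t}(X_t)$, then argue that the drift must be null), but two key steps are genuinely different. First, where you kill the drift: you compute $\mathbb{E}_{(x,i)}[{\rm e}^{-qT}u(X_T,J_T)]$ directly by splitting on $\{\tau^+_a<\tau^-_0\}$ versus $\{\tau^-_0<\tau^+_a\}$ and plugging in identities \eqref{id1} and \eqref{id2}, exploiting that $\Z^{(q)}(y)=\I$ for $y\le 0$ so the boundary value at $X_{\tau^-_0}$ is simply $1$ (not the ambiguous $\W^{(q)}(0)$ vs.\ $\W^{(q)}(0+)$). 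The two $\W^{(q)}(x)\W^{(q)}(a)^{-1}\Z^{(q)}(a)1\!\!1$ terms cancel and you get $\mathbb{E}_{(x,i)}[{\rm e}^{-qT}u(X_T,J_T)]=u(x,i)$, hence the drift has zero expectation. This bypasses the paper's more delicate route, which first establishes the $\W^{(q)}$-scale martingale \eqref{mart0} by proving the auxiliary identity $u(x,i)=0$ in \eqref{W0} via a decomposition along the modulator's jump times (needed precisely because of the $\W^{(q)}(0)$ ambiguity on $\{X_{\tau^-_0}=0\}$), and then obtains the $\Z^{(q)}$-martingale \eqref{mart1} by linear combination. Your Z-only argument is cleaner. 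Second, in upgrading the integrated identity to the pointwise statement: you use strong Markov plus a Lebesgue-differentiation argument (divide by $t$, let $t\downarrow 0$, invoke right-continuity of the paths and of $H_i$ and bounded convergence on the compact range $[0,a]$). The paper instead samples at $\sigma_1\wedge\tau_{0,a}$ to reduce to a single L\'evy process killed at rate $q+|q_{i,i}|$, and invokes strict positivity and continuity of its killed potential density $u^{(q+|q_{i,i}|)}_i(a,x,\cdot)$ on $[0,a]$ plus ``standard arguments'' to deduce $H_i=0$ a.e., finishing again by right-continuity. Both routes are sound; yours is arguably more self-contained, while the paper's reduction to a single L\'evy process leans on a well-established L\'evy potential-theoretic fact. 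One small imprecision to tidy in your write-up: on $\{\tau^-_0<\tau^+_a\}$ you can have $X_{\tau^-_0}=0$ (downward creeping in an unbounded-variation state), not necessarily $X_{\tau^-_0}<0$; this does not affect the conclusion since $\Z^{(q)}(0)=\I$, but the boundary case should be acknowledged.
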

\begin{proof}  We start by proving the claim that
\begin{equation}
u(x,i): = \mathbb{E}_{(x,i)}\left[ {\rm e}^{-q\tau^-_0} [\W^{(q)}(X_{\tau^-_0})\W^{(q)}(a)^{-1}1\!\!1]_{J_{\tau^-_0}}; X_{\tau^-_0} = 0 , \tau^-_0<\tau^+_a\right] = 0,
\label{W0}
\end{equation}
for all $x\geq 0$. To this end, we break the expectation on the right-hand side of \eqref{W0} according to the exhaustive disjoint union of events $\bigcup_{n\geq 0}\{\tau^-_0\in [\sigma_n,\sigma_{n+1})\}$, where $\sigma_0 = 0$ and $\sigma_n$ is the $n$-th jump time of the modulator $J$, for $n\geq 1$. This, together with the Markov property gives us
 \begin{align*}
& u(x,i) \\
&= \mathbb{E}_{(x,i)}\left[{\rm e}^{-q\tau^-_0} [\W^{(q)}(0+)\W^{(q)}(a)^{-1}1\!\!1]_{J_{\tau^-_0}}; X_{\tau^-_0} = 0 , \tau^-_0<\tau^+_a\wedge \sigma_1\right]\\
 &+ \sum_{n\geq 1}\mathbb{E}_{(x,i)}\left[{\rm e}^{-q \sigma_n}
 \mathbf{1}_{(\sigma_n < \tau^-_0\wedge \tau^+_a)}\mathbb{E}_{(X_{\sigma_n},J_{\sigma_n}) }\left[ [\W^{(q)}(0+)\W^{(q)}(a)^{-1}1\!\!1]_{J_{\tau^-_0}}; X_{\tau^-_0} = 0 , \tau^-_0<\tau^+_a\wedge \sigma_1\right]\right]\\
 &= \mathbb{E}_{(x,i)}\left[{\rm e}^{-q\tau^-_0} [\W^{(q)}(0+)\W^{(q)}(a)^{-1}1\!\!1]_{i}; X_{\tau^-_0} = 0 ,  \tau^-_0<\tau^+_a\wedge \sigma_1\right]\\
 &+ \sum_{n\geq 1}\mathbb{E}_{(x,i)}\left[{\rm e}^{-q \sigma_n}
 \mathbf{1}_{(\sigma_n < \tau^-_0\wedge \tau^+_a)}\mathbb{E}_{(X_{\sigma_n}, j) }\left[ [\W^{(q)}(0+)\W^{(q)}(a)^{-1}1\!\!1]_{j}; X_{\tau^-_0} = 0 , \tau^-_0<\tau^+_a\wedge \sigma_1\right]_{J_{\sigma_n} =j}\right],
 \end{align*}
 where we have used the fact that, if $J_0 =i$, then  $J_{\tau^-_0} = i$ on the event $\tau^-_0<\sigma_1$.

\bigskip

 Next note that for an expectation  of the form
 \begin{equation}
 \mathbb{E}_{(x, j) }\left[ [\W^{(q)}(0+)\W^{(q)}(a)^{-1}1\!\!1]_{j}; X_{\tau^-_0} = 0 , \tau^-_0<\tau^+_a\wedge \sigma_1\right],
\label{expectation}
 \end{equation}
 if $j\in E^{\texttt{ubv}}$, then $[\W^{(q)}(0+)\W^{(q)}(a)^{-1}1\!\!1]_{j}=0$ thanks to Theorem \ref{ZWlem}, or, otherwise, if $j\in E^{\texttt{bv}}$, then $\{X_{\tau^-_0} = 0, \, \tau^-_0< \sigma_1\}$ is almost surely the empty set. Either way, the expectation \eqref{expectation} is zero and, hence,  $u(x,i) = 0$ for all $x\in \mathbb{R}$ and $i\in E$, as claimed in \eqref{W0}.
 \bigskip

 With \eqref{W0} in hand, we now note that either
 \[
 [\W^{(q)}(X_{\tau^-_0})\W^{(q)}(a)^{-1}1\!\!1]_{J_{\tau^-_0}} = 0\quad \text{  or  }\quad\mathbf{1}_{(X_{\tau^-_0 }= 0)}=0
 \]
 almost surely on the event $\{\tau^-_0< \tau^+_a\}$. Noting that $[\W^{(q)}(X_{\tau^+_a})\W^{(q)}(a)^{-1}1\!\!1]_{J_{\tau^+_a}} = 1$ almost surely, it follows that, with    $\tau_{0,a} : = \tau^+_a \wedge \tau^-_0$,
 \begin{equation}
 {\rm e}^{-q \tau^+_a}   1_{\{\tau_a^+<\tau_0^-\}}    = {\rm e}^{-q \tau_{0,a}} [\W^{(q)}(X_{\tau_{0,a}})\W^{(q)}(a)^{-1} 1\!\!1 ]_{J_{\tau_{0,a}}}.
 \end{equation}
 Taking expectations, this gives us, for $x\in\mathbb{R}$, $i\in E$.
 \[
\mathbb{E}_{(x,i)} \left[{\rm e}^{-q \tau_a^+}   1_{\{\tau_a^+<\tau_0^-\}}  \right ]= \mathbb{E}_{(x,i)}  \left[{\rm e}^{-q \tau_{0,a}} [\W^{(q)}(X_{\tau_{0,a}})\W^{(q)}(a)^{-1} 1\!\!1]_{J_{\tau_{0,a}} }\right].
\]
 This and Markov property imply
\begin{align}
& \mathbb{E}_{(x,i)}\left[{\rm e}^{-q \tau_{0,a}} [\W^{(q)}\left(X_{\tau_{0,a}}  \right)\W^{(q)}(a)^{-1} 1\!\!1]_{J_{\tau_{0,a}}}  | \mathcal{G}_{t}\right] \notag\\
& = \mathbf{1}_{(t \leq \tau_{0,a})} {\rm e}^{-q t} \mathbb{E}_{X_{t}, J_t}\left[{\rm e}^{-q \tau_{0,a}} [\W^{(q)}\left(X_{\tau_{0,a}}\right)\W^{(q)}(a)^{-1}1\!\!1]_{J_{\tau_{0,a}}}\right] \\
&\quad \quad +\mathbf{1}_{(t>\tau_{0,a})} {\rm e}^{-q \tau_{0,a}} [\W^{(q)}\left(X_{\tau_{0,a}}\right)\W^{(q)}(a)^{-1}1\!\!1] _{J_{\tau_{0,a}}}\notag\\
&=\mathbf{1}_{(t \leq \tau_{0,a})} {\rm e}^{-q t} [\W^{(q)}\left(X_{t}\right)\W^{(q)}(a)^{-1}1\!\!1]_{J_t} +\mathbf{1}_{(t>\tau_{0,a})} {\rm e}^{-q \tau} [\W^{(q)}\left(X_{\tau_{0,a}}\right)\W^{(q)}(a)^{-1}1\!\!1]_{J_{\tau_{0,a}}} \\ &={\rm e}^{-q({t \wedge \tau_{0,a}})} [\W^{(q)}\left(X_{{t \wedge \tau_{0,a}}}\right)\W^{(q)}(a)^{-1}1\!\!1]_{J_{{t \wedge \tau_{0,a}}}}\; .
\label{mgcomputation}
\end{align}
In other words, we have that
\begin{equation} \label{mart0}
{\rm e}^{-q({t \wedge \tau_{0,a}})} [\W^{(q)}\left(X_{{t \wedge \tau_{0,a}}}\right)\W^{(q)}(a)^{-1}1\!\!1]_{J_{{t \wedge \tau_{0,a}}}}, \qquad t\geq 0,
\end{equation}
is a martingale.
In a similar spirit, noting from Theorem \ref{ZWlem} that $\Z^{(q)}(x) = \I$ for $x\leq 0$, we can deduce   with the help of \eqref{id2} in Proposition \ref{prop1}, that
\begin{equation}
{e}^{-q({t \wedge \tau_{0,a}} )}[\Z^{(q)}(X_{{t \wedge \tau_{0,a}}})1\!\!1]_{J_{{t \wedge \tau_{0,a}}}} -{e}^{-q({t \wedge \tau_{0,a}} )} [ \W^{(q)}(X_{{t \wedge \tau_{0,a}}})\W^{(q)}(a)^{-1}\Z^{(q)}(a)1\!\!1]_{J_{{t \wedge \tau_{0,a}}}}
\end{equation}
is a martingale as well. As linear combinations of martingales are still martingales, we thus have that \begin{equation} \label{mart1}
{e}^{-q({t \wedge \tau_{0,a}} )}[\Z^{(q)}(X_{{t \wedge \tau_{0,a}}})1\!\!1]_{J_{{t \wedge \tau_{0,a}}}}
\end{equation}
is also a martingale.

\bigskip

Recall from Theorem \ref{ZWlem} (iv) that $\Z^{(q)}$ is continuously differentiable, except possibly at $0$ and otherwise is almost everywhere twice differentiable with a right-continuous second derivative on $\mathbb{R}$ (which is thus locally bounded). Along the intervals of time between  jumps of the MAP modulator, i.e. $[\sigma_n, \sigma_{n+1})$, for $n\geq 0$, we can  apply piecewise the  version of It\^o's formula in Theorem 3 of  \cite{EK}, together with the conclusion of Lemma 7 (ii) in the same paper (which permits us to write It\^o's formula in the same way as usual despite the slightly weaker smoothness assumptions), and get, on $\{t\leq \tau_{0,a}\}$,

\begin{eqnarray}
	{\rm d} ({\rm e}^{-qt}[\Z^{(q)}(X_{t})1\!\!1]_{J_{t}})&=
	{\rm e}^{-qt} \mathcal{A}^{J_{t-}}([\Z^{(q)}(\cdot)1\!\!1]_{J_{t-}}) (X_{t-}) \, {\dd t} +{\rm e}^{-qt}\mathcal{B}([\Z^{(q)}(\cdot)1\!\!1]_{\cdot}) (X_{t-},J_{t-})\, {\dd t} \notag\\
	& -q {\rm e}^{-qt}[\Z^{(q)}(X_{t-})1\!\!1]_{J_{t-}} \, {\dd t} + {\rm e}^{-qt}\dd  m_t(X, J),\qquad t\geq 0.
	\label{nodrift}
\end{eqnarray}
where, from Lemma \ref{mmg} $(m_t(X, J) , t\geq 0)$ is an $\mathbb{R}$-valued  martingale.
% given by
%\begin{align}
%& m_{t}(X, J)\notag\\&=  \int_{0}^{t} [ \Z^{(q)\prime}\left(X_{u-}\right) 1\!\!1]_{J_u} \sigma_{J_u}d B_{u}
%%+\int_{0+}^{t} \Z^{(q)\prime}\left(X_{u-}\right) 1\!\!1_{J_u} d X_{u}
%\notag\\
%&+\sum_{0<u \leq t}\mathbf{1}_{(J_{u-} = J_u)}\left([( \Z^{(q)}(X_{u}) -\Z^{(q)}(X_{u-}))1\!\!1]_{J_u}
%-\mathbf{1}_{(\Delta X_{u} \geq-1)}\Delta X_{u} [\Z^{(q)\prime}\left(X_{u-}\right) 1\!\!1]_{J_u} \right) \notag\\
%&-\int_{0}^{t} \int_{(-\infty, 0)}\left([\Z^{(q)}\left(X_{u-}+y\right)1\!\!1]_{J_u}-[\Z^{(q)}\left(X_{u-}\right)1\!\!1]_{J_u}-y \mathbf{1}_{\{y \geq-1\}}  [\Z^{(q)\prime}(X_{u-})1\!\!1]_{J_u} \right) \nu_{J_u}({\rm d}y) {\rm d}u\notag\\
%&+\sum_{0<u \leq t}\mathbf{1}_{(J_{u-} \neq J_u)}\left([\Z^{(q)}(X_{u})1\!\!1]_{J_u} -  [\Z^{(q)}(X_{u})1\!\!1]_{J_{u-}}\right)\notag\\
%&-\int_0^t \sum_{k = 1}^N\int_{(-\infty,0)}\left([\Z^{(q)}\left(X_{u-}+y\right)1\!\!1]_{k}-[\Z^{(q)}\left(X_{u-}\right)1\!\!1]_{J_{u-}}\right) q_{J_{u-},k}F_{J_{u-},k}({\rm d}y) {\rm d}u,% \qquad t\geq 0.
%\label{mcalc}
%\end{align}
%for $t\geq0$, where $(B_t, t\geq 0)$ is a standard Brownian motion.
%\bigskip
%
%
%We have not yet justified that $\int_0^t {\rm e}^{-q u}\dd m_u(X, J)$, $t\geq 0$ is a martingale. Let us momentarily deal with this.

\bigskip

Since both \eqref{mart1} and $(m_{t \wedge \tau_{0,a} }(X, J), t\geq 0)$ are  martingales,  the drift term in \eqref{nodrift} must be zero. Sampling the two aforesaid martingales at the stopping time $\sigma_1\wedge \tau_{0,a}$ (recall $\sigma_1$ is the first jump time of $J$) gives us,  for all $i\in E$, $0<x<a$ and $t\geq 0$,
\begin{align}
0&=\mathbb{E}_{(x,i)}\bigg[
\int_0^{\sigma_1 \wedge \tau_{0,a}}
	{\rm e}^{-qs} \bigg(\mathcal{A}^{J_{s-}}([\Z^{(q)}(\cdot)1\!\!1]_{J_{s-}}) (X_{s-})  \\
	&\hspace{5.5cm}+ \mathcal{B}([\Z^{(q)}(\cdot)1\!\!1]_{\cdot}) (X_{s-},J_{s-})
	 -q [\Z^{(q)}(X_{s-})1\!\!1]_{J_{s-}} \bigg) {\dd s}
\bigg]\notag\\
&=\mathbf{E}^{i}_{x}\bigg[
\int_0^{ \tau^i_{0,a}}
	{\rm e}^{-(q+|q_{i,i}|)s} H_i(X^i_{s-}) {\dd s}
\bigg],
\label{=0}
\end{align}
where $(X^i_t, t\geq 0)$, with probabilities $(\mathbf{P}^i_x, x\in \mathbb{R})$, is an independent copy of the spectrally negative L\'evy process with Laplace exponent $\psi_i$ and $\tau^i_{0,a} = \inf\{t>0: X^i_t\not\in[0,a]\}$  from which the statement of the theorem follows for $x>0$. From Section 8.4 of \cite{kbook}, we know that  the $(q+|q_{i,i}|)$-potential measure  of $X^i$  killed on exiting $[0,a]$ has a density with respect to Lebesgue measure, say $u^{(q+|q_{i,i}|)}_i(a, x,y)$, which is continuous in $q\geq 0$, $x,y\geq 0$, and  strictly positive.  The identity \eqref{=0} thus reads
\[
\int_0^a H_i(y)u^{(q+|q_{i,i}|)}_i(a, x,y)\,\dd y = 0, \qquad x\in[0,a], a>0, q>0,i\in E,
\]
 and standard arguments allow us to deduce that $H_i (x) = 0$ for Lebesgue almost every $x>0$. It is easy to verify from its definition that $H_i(x)$ is right continuous and hence  $H_i (x) = 0$ for $x\geq 0$.
\bigskip

To deal with the case $x<0$, it suffices to note that $\Z^{(q)}(x) = \I$ and one can verify by hand that
\[
	\mathcal{A}^i([\Z^{(q)}(\cdot)1\!\!1]_{i}) (x) \,  +\mathcal{B}([\Z^{(q)}(\cdot)1\!\!1]_{\cdot}) (x,i)-q[\Z^{(q)}(x)1\!\!1]_{i}  = -q<  0,
	\]
	as required. \end{proof}

 \bigskip

 \begin{proof}[Proof (of Theorem \ref{main})]
 The essence of our proof is to verify the three conditions of Lemma \ref{verify}, by appealing to the assumptions of Theorem \ref{main}.
 As noted below \eqref{simpleV}, by guessing the strategy $\tau_g$ and insisting on the smooth and continuous pasting properties as described in Section \ref{heuristic}, part (i) of Lemma \ref{verify} is verified for $s\geq s^*$.

% \bigskip
%
% For $s<s^*$, on account of the fact that $g(s,j) = +\infty$ for all $s<s^*$, the Markov property ensures that
% \[
%V_g(x,s,i,j)= \mathbb{E}_{(x,s,i,j)}
%[{\rm e}^{-q\tau_g}f(\overline{X}_{\tau_g},\bar{J}_{\tau_g})] =
%  \big[\bDelta _{\boldsymbol{v}}\left(  \Phi(q)  \right) {\rm e}^{(s^*-x)\bplus(q) }\bDelta _{\boldsymbol{v}}\left(  \Phi(q) \right)^{-1} V(s^*,s^*, \cdot, \cdot)\big]_{i},
% \]
% for  $x\leq s<s^*$ and $i,j\in E$, with the dependency on $j$ lost as there is no reward for stopping until the running maximum reaches the higher level $s^*$, in which case the state of $\bar{J}$ may have changed.
%
 \bigskip

To verify part (ii) of Lemma \ref{verify}, given that
 \begin{align*}
 V_g(x,s,i,j)&=  f(s,j)  [\Z^{(q)}(x-s+g(s,j))1\!\!1]_{i} \\
 &=  f(s,j)\left[
1 + \int_0^{x-s+g(s,j)} [\W^{(q)}(y) (qI - \Q)1\!\!1]_i\,\dd y
 \right]\\
 &=  f(s,j)\left[
1 + q \int_0^{x-s+g(s,j)} [\W^{(q)}(y) 1\!\!1]_i\,\dd y
 \right]
  \end{align*}
  for $x\leq s$, $i,j\in E$, since $\W^{(q)}(y) = 0$ for $y\leq 0$, it is easy to see that, for  $s-x\geq g(s,j)$,
  \[
   V_g(x,s,i,j)= f(s,j).
  \]
  Next note that, for  $s-x\leq g(s,j)$, we have
  \[
   V_g(x,s,i,j)\ge f(s,j).
  \]
 as $[\Z^{(q)}(y) 1\!\!1]_j \ge 1$ under the assumption that $g(s,j)\le a(j)$ for all $j\in E$, where $a(j)\in (0,\infty]$ by Corollary \ref{rem2}.
%Referring to \eqref{belows*1} and \eqref{belows*2},  the above inequality is trivial for $s<s^*$ as $f(s,j)=0$ there.
  \bigskip

  Finally for part (iii) of Lemma \ref{verify}, we need to apply again an appropriate form of It\^o's formula.
%In the spirit of the computation in \eqref{mgcomputation}, it is sufficient to show that
%\begin{equation}
%\label{secondcon} \mathbb{E}_{(x,s,i,j)}[{\rm e}^{-qt}V_{g}(X_t,\overline{X}_t,J_t,\bar{J}_t)] \leq V_g(x,s,i,j),
%\end{equation}
%for $x\leq s$ and $i,j\in E$.
%
%
%
%\bigskip
%
To this end, note that
\[V_{g}(X_t,\overline{X}_t,J_t,\bar{J}_t)=  f(\overline{X}_t,\bar{J}_t) [\Z^{(q)}(X_{t}-\overline{X}_t+g(\overline{X}_{t},\bar{J}_t))1\!\!1]_{J_t}, \qquad t\geq 0.
\]
Let
\begin{equation} \label{Y}
Y_{t}=X_{t}-\overline{X}_t+g(\overline{X}_{t},\bar{J}_t), \qquad t\geq 0.
\end{equation}

Mixing the integration by parts formula with the earlier indicated calculus from \cite{EK}, we have, for $t\geq 0$,
\begin{align*}
	 \dd[{\rm e}^{-qt}V_{g}(X_t,\overline{X}_t,J_t,\bar{J}_t)]&={\rm e}^{-qt}f'(\overline{X}_{t-},\bar{J}_{t-})[\Z^{(q)}(Y_{t-})1\!\!1]_{J_{t-}}\,\dd\overline{X}_t\\
	 &
	+{\rm e}^{-qt}f(\overline{X}_t,\bar{J}_t)\, \dd([\Z^{(q)}(Y_{t})1\!\!1]_{J_t})
	-q{\rm e}^{-qt}f(\overline{X}_t,\bar{J}_t)[\Z^{(q)}(Y_{t})1\!\!1]_{J_t}\,\dd t.
	\end{align*}
	
Using that $Y_{t}=g(\overline{X}_{t},\bar{J}_t)$ and $J_t = \bar{J}_t$ for all $t$ in the support of $\dd \overline{X}_t$, together  with the help of \eqref{nodrift} and \eqref{mcalclem}, we can develop the calculus further to get
\begin{align}
& \dd[{\rm e}^{-qt}V_{g}(X_t,\overline{X}_t,J_t,\bar{J}_t)]\notag\\
&={\rm e}^{-qt}
 \Big(f'(\overline{X}_{t-},\bar{J}_{t-})[\Z^{(q)}(g(\overline{X}_{t-},\bar{J}_{t-}))1\!\!1]_{\bar{J}_{t-}} \notag\\
 &\hspace{2cm}+f(\overline{X}_{t-},\bar{J}_{t-})(g'(\overline{X}_{t-},\bar{J}_{t-})-1)
 [\Z^{(q)\prime}(g(\overline{X}_{t-},\bar{J}_{t-}))1\!\!1]_{\bar{J}_{t-}}
 \Big)\dd\overline{X}_t\notag\\
	 &
	+{\rm e}^{-qt}f(\overline{X}_t,\bar{J}_t)\,\left(
	 \mathcal{A}^{J_{t-}}([\Z^{(q)}(\cdot)1\!\!1]_{J_{t-}}) (Y_{t-})  +\mathcal{B}([\Z^{(q)}(\cdot)1\!\!1]_{\cdot}) (Y_{t-},J_{t-})
	-q[\Z^{(q)}(Y_{t})1\!\!1]_{J_t} \right) {\dd t}\notag \\
	 &+{\rm e}^{-qt}f(\overline{X}_t,\bar{J}_t) \,\dd  m_t(Y),\qquad t\geq 0.
\label{bits}
\end{align}
For the integral with respect to $\dd \overline{X}_t$ in \eqref{bits}, the assumption that $g$ solves \eqref{gODE} implies that its integrand is zero. For the integral with respect to $\dd t$ in \eqref{bits}, noting from  the piecewise construction of the MAP from spectrally negative L\'evy processes that $X$ spends zero Lebesgue time at $0$, we can use Proposition \ref{lem} to deduce that its integrand is strictly negative when $Y_t<0$.
\bigskip

We are thus left with
\begin{equation}
\dd[{\rm e}^{-qt}V_{g}(X_t,\overline{X}_t,J_t,\bar{J}_t)] =  {\rm e}^{-qt}f(\overline{X}_t,\bar{J}_t)\, \dd  m_t(Y) - q\mathbf{1}_{(Y_t<0)}  \dd t,
\label{smg}
\end{equation}
for $t\geq 0$, $s\geq s^*$. We know, however that $(m_t(Y),t\geq 0)$ has martingale increments and thus
$\int_0^t{\rm e}^{-qs}f(\overline{X}_s,\bar{J}_s)\, \dd  m_s(Y)$ is a local martingale. To verify it is a martingale, we leave the bulk of the details to the reader, however, in the spirit of the verification of \eqref{<oo1} and \eqref{<oo2}, it suffices to check that the necessary $L_1$- and $L_2$-isometries hold, for which it is sufficient that, for all $t\geq0$,
\[
\mathbb{E}_{(x,i)}\left[\int_0^t {\rm e}^{-qu}f(\overline{X}_u,\bar{J}_u)  {\rm e}^{\Phi(q)X_u}\dd u\right]+\mathbb{E}_{(x,i)}\left[\int_0^t {\rm e}^{-2qu}f(\overline{X}_u,\bar{J}_u)^2  {\rm e}^{2\Phi(q)X_u}\dd u\right]<\infty.
\]
This is automatically satisfied once \eqref{conditions} is fulfilled.
\bigskip

The supermartingale property in part (iii) of Lemma \ref{verify} is thus satisfied. Note in particular,
 \begin{equation}
\mathbb{E}_{(x,s,i,j)}\left[{\rm e}^{-q t}V_g(X_t,\overline{X}_t,J_t,\bar{J}_t)\right]\leq V_g(x,s,i,j), \qquad t\geq 0, x\leq s, i,j\in E.
\label{superh}
\end{equation}

Inequality \eqref{superh}, together with the Markov property, is sufficient to deduce the required supermartingale property required in (iii) of Lemma \ref{verify}. The proof is now complete.
\end{proof}

\begin{rem}\label{dontneedgdiffeq}\rm
If we look closer at the above proof, we note that the requirement that $g$ solves \eqref{gODE} emerges in the need for \eqref{bits} to be a supermartingale. It is noticeable in this respect that a weaker condition for the latter is simply that
\begin{equation}
g'(s,j)
 \leq 1-
 \frac{
f'(s,j)[\Z^{(q)}(g(s,j))1\!\!1]_{j}
 }
 {
 f(s,\bar{J}_t)[\Z^{(q)\prime}(g(s,j))1\!\!1]_{j}
  }, \qquad j\in E, s\in\mathbb{R}.
\label{gODE2}
\end{equation}
\end{rem}

\section{Shepp-Shiryaev optimal stopping problem}

In \cite{avra}, pricing of the  Russian option is considered for the spectrally negative  L\'{e}vy case, where it had earlier been introduced in the Black-Scholes setting in \cite{shep1, shep2}. Setting aside the financial connection, let us investigate here the natural analogue of this problem in the MAP setting. This is tantamount to considering the gain function in \eqref{OST} taking the form
\[
f(s,j) = {\rm e}^s h_j, \qquad s\in\mathbb{R}, j\in E,
\]
where $h_j \in\mathbb{R}_+ $.
Theorem \ref{main} guides us to checking a number of conditions, the principal one needing us to find a solution to
\eqref{gODE}. In the current setting \eqref{gODE}   takes the form
\begin{equation}   \label{dif}
g^\prime(s,j) =  % 1-  \frac{ \sum_k [ \Z^{(q)} (g(s,j))]_{jk} }{ \sum_k [\Z^{(q)\prime} (g(s,j))]_{jk}}
  1- \frac{  [\Z^{(q)} (g(s,j))  1\!\!1]_j}{ [q\W^{(q)}(g(s,j))1\!\!1]_j }, \qquad s\in\mathbb{R}, j\in E.
\end{equation}
Note that \eqref{dif} presents us with $N$ independent differential  equations, rather than a more complicated coupled system. Indeed, this is true in general for \eqref{gODE}.

\bigskip

Taking inspiration from \cite{avra}, which solves the analogue of  \eqref{dif} for spectrally negative L\'evy processes, we look for solutions taking the form $g(s,j)=c_j$ for each $j$.
 In theory, \eqref{gODE} suggests  we  look for roots of the equation
	\[
	  [\Z^{(q)} (x) 1\!\!1]_{j} -q [ \W^{(q)} (x)1\!\!1]_{j} = 0,
%	\label{roots}
	\]

for $x\geq 0$.  However, noting from \cite{avra} that roots of the analogous equation (written in that context with the scale function $Z^{(q)}$) don't always exist, we look instead towards the suggested alternative condition to Theorem \ref{main} (i) alluded to in Remark \ref{dontneedgdiffeq}. Bearing in mind the observation in Remark \ref{necessary}, we seek a choice of constant values $g(s, j) = c_j$, $j\in E$, that satisfy \eqref{gODE2}, which is equivalent to seeking minimal solutions to
\[
u_j(x):= [\Z^{(q)}   1\!\!1]_j(x)-q[\W^{(q) }   1\!\!1]_j(x) \leq 0, \qquad x\geq 0.
\]

%\[
%u_j(x)= \frac{  [\Z^{(q)} (x)  1\!\!1]_j}{ [q\W^{(q)}(x)1\!\!1]_j } -1\leq 0, \qquad x\geq 0.
%\]

%limits to $-\infty$ as $x\to\infty$ and to identify the existence of a unique root depending on whether  $u_j(0)\geq 0$ or not. %The setting that $u_j(0)<0$, that is to say $1-[q\W^{(q) }   1\!\!1]_j(0)<0$, is not excluded, however we will need to deal with it on an ad-hoc basis.
In summary, we will prove the following result. Recall the definition
\[
a(j)= \inf\{x> 0:  [\Z^{(q)}(x)1\!\!1]_j \le 1\}
\]
from Theorem \ref{main}.

\begin{thm}\label{russian-alternative} Assume that $q>0\vee \kappa(1)$,  $
	f(s,j) = {\rm e}^s h_j>0$, $s\in\mathbb{R}$, $j\in E$, and let
	\[
	c_{j} = \inf\{x\geq 0: u_j(x)\leq 0\}.
	\]
\begin{itemize}
    \item[(i)] When $[\W^{(q)}1\!\!1]_j(0+)\in[q^{-1},\infty)$ for $j\in E$, then $c_j =0$.
    \item[(ii)] When $[\W^{(q)}1\!\!1]_j(0+)\in [0, q^{-1})$   for $j\in E$,
assume that $\lim_{x\rightarrow \infty} [\W^{(q)}   1\!\!1]_j(x)=+\infty$. Then, $c_j\in(0,\infty)$.
\end{itemize}

If, moreover $c_j\le a(j)$ for all $j\in E$, then the  solution of the optimal stopping problem \eqref{OST} is given by
	\[
	V(x,s,i,j) = h_j{\rm e}^s [\Z^{(q)}(x-s+c_{j}) 1\!\!1]_{i},
	\qquad  x\leq s, i,j\in E,
	\]
	and the optimal strategy is given by
	\[
	\tau_{c}=\inf\{t\ge 0:\overline{X}_{t}-X_t\ge c_{\bar{J}_t}\}.
	\]
%Moreover, the constants $c_j$ can be quantified according to the following categories:
% \begin{itemize}
%	\item[(a)] If $[\W^{(q)}1\!\!1]_j(0+)=0$ (which implies that when the modulator is in state $j$, the ordinate has unbounded variation), then $c_j\in(0,\infty)$;
%	\item[(b)] If  $[\W^{(q)}1\!\!1]_j(0+)\in(0, q^{-1})$ (which implies that when the modulator is in state $j$, the ordinate has bounded variation), then $c_j  \in(0,\infty)$;
%	\item[(c)]
%\item[(d)] If $q\leq \kappa(1)$  then $c_j =\infty$;
%	\end{itemize}
\end{thm}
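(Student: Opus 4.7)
The plan is to apply Theorem \ref{main} together with the relaxation in Remark \ref{dontneedgdiffeq} (that \eqref{gODE} may be replaced by the inequality \eqref{gODE2}), seeking a solution of the form $g(s,j)\equiv c_j$ independent of $s$, as is classical for Russian-type payoffs and in direct analogy with the L\'evy treatment in \cite{avra}. With $f(s,j)={\rm e}^{s}h_j$, one has $f'(s,j)/f(s,j)=1$ and $g'(s,j)=0$, so \eqref{gODE2} collapses to the pointwise condition $u_j(c_j)\leq 0$, where $u_j(x):=[\Z^{(q)}(x)1\!\!1]_j-q[\W^{(q)}(x)1\!\!1]_j$. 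The maximality principle recalled in Remark \ref{necessary} singles out the minimal such value, motivating the choice $c_j=\inf\{x\geq 0:u_j(x)\leq 0\}$ that appears in the statement.

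Parts (i) and (ii) are established by studying $u_j$ at the endpoints. Since $[\Z^{(q)}(0)1\!\!1]_j=1$, one has $u_j(0)=1-q[\W^{(q)}(0+)1\!\!1]_j\leq 0$ exactly when $[\W^{(q)}(0+)1\!\!1]_j\geq q^{-1}$, immediately giving $c_j=0$ in case (i). In case (ii), $u_j(0)>0$, and the task is to show $u_j(x)\to -\infty$ as $x\to\infty$. Combining the decomposition \eqref{Wqdecomp} with \eqref{pos} and the finiteness of $\W_{\Phi(q)}(\infty)$, one extracts the asymptotic $[\W^{(q)}(x)1\!\!1]_j\sim C_j {\rm e}^{\Phi(q)x}$ for some $C_j>0$. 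Integrating and using \eqref{Zq1} yields $[\Z^{(q)}(x)1\!\!1]_j \sim qC_j{\rm e}^{\Phi(q)x}/\Phi(q)$, whence $u_j(x)\sim qC_j{\rm e}^{\Phi(q)x}(\Phi(q)^{-1}-1)$. The hypothesis $q>\kappa(1)$ forces $\Phi(q)>1$ (by convexity of $\kappa$ and $\kappa(\Phi(q))=q$), so $u_j(x)\to -\infty$ and the intermediate value theorem places $c_j\in(0,\infty)$.

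It remains to verify the hypotheses of Theorem \ref{main} (in its \eqref{gODE2} form). Hypothesis (i) holds by construction and (iii) is trivial since $f$ is smooth in $s$. For (ii), finiteness of $E$ gives $\max_j c_j<\infty$, and the Poisson-point-process structure of excursions of $\overline{X}-X$ from the running supremum, together with the fact that excursions of unbounded depth occur a.s.\ in finite time under a spectrally negative MAP (as may be verified through the Esscher transform \eqref{alaGirsanov}, under which $X$ drifts to $+\infty$ while still producing downward fluctuations of arbitrary size), ensures $\tau_c<\infty$ a.s. For (iv), the bound $\overline{X}_u\leq \overline{X}_t$ combined with Cauchy--Schwarz reduces both integrals to estimates of the form $\mathbb{E}_{(x,i)}[{\rm e}^{r\overline{X}_t}]$ and $\mathbb{E}_{(x,i)}[{\rm e}^{r\Phi(q) X_u}]$ for $r\in\{2,4\}$. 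The latter equals $[{\rm e}^{\bPsi(r\Phi(q))u}1\!\!1]_i$ by \eqref{psi2}, finite because matrix exponents are defined for all nonnegative arguments. The former follows from Doob's $L^p$ maximal inequality applied to the positive Esscher martingale from \eqref{alaGirsanov} at level $r$, combined with finiteness of $\psi_i$ at positive arguments for each $i$.

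With every hypothesis in force, Theorem \ref{main} delivers the claimed value function $V(x,s,i,j)=h_j{\rm e}^{s}[\Z^{(q)}(x-s+c_j)1\!\!1]_i$ and optimal rule $\tau_c$. The principal technical step is the asymptotic analysis in case (ii): it is precisely here that the assumption $q>\kappa(1)$ is used, and without $\Phi(q)>1$ the leading-order terms of $[\Z^{(q)}(x)1\!\!1]_j$ and $q[\W^{(q)}(x)1\!\!1]_j$ would not balance with the correct sign and $c_j$ could fail to exist. The integrability verification (iv) is straightforward but requires some bookkeeping through the change of measure \eqref{alaGirsanov}.
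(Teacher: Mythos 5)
Your overall strategy matches the paper's: appeal to Theorem \ref{main} together with the relaxation \eqref{gODE2} of Remark \ref{dontneedgdiffeq}, look for constant solutions $g(s,j)\equiv c_j$, and observe that $f'/f\equiv 1$ collapses everything to $u_j(c_j)\le 0$, with the maximality principle (Remark \ref{necessary}) singling out the infimum. Case (i) and the reduction are exactly the paper's argument, and your treatments of conditions (ii) and (iv) of Theorem \ref{main} are acceptable variants (the paper derives \eqref{conditions} via exponential killing and \eqref{id0}, you via Doob's $L^p$ maximal inequality for the Esscher martingale — both work under spectral negativity).

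However, your key step in case (ii) has a genuine gap. You assert from \eqref{Wqdecomp} and \eqref{pos} the asymptotic $[\W^{(q)}1\!\!1]_j(x)\sim C_j e^{\Phi(q)x}$ with $C_j>0$. In fact $C_j=[\bDelta_{\boldsymbol v}(\Phi(q))\W_{\Phi(q)}(\infty)\bDelta_{\boldsymbol v}(\Phi(q))^{-1}1\!\!1]_j$, and since $\W_{\Phi(q)}(\infty)$ can have negative entries (see the discussion around \eqref{pos}), nothing forces $C_j>0$. Indeed, the paper's own numerical example at $q=1.5$ exhibits $[\W^{(q)}1\!\!1]_2(x)$ turning negative for $x>0.87$, which certainly rules out $C_2>0$. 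The extra hypothesis $\lim_{x\to\infty}[\W^{(q)}1\!\!1]_j(x)=+\infty$ in case (ii) is precisely what the statement adds to exclude this pathology. The clean route the paper takes is Proposition \ref{prop3}, which establishes the \emph{ratio} limit $\lim_{x\to\infty}[\Z^{(q)}1\!\!1]_j(x)/[\W^{(q)}1\!\!1]_j(x)=q/\Phi(q)$ independently of the sign of $C_j$; combined with $\Phi(q)>1$ and the hypothesis $[\W^{(q)}1\!\!1]_j(x)\to+\infty$, this yields $u_j(x)\to-\infty$ without ever needing a fixed-constant asymptotic. You should either invoke Proposition \ref{prop3} at this point or state explicitly that the positivity of $C_j$ (more generally, the divergence of $[\W^{(q)}1\!\!1]_j$) is exactly the added hypothesis of the theorem rather than a consequence of \eqref{Wqdecomp}--\eqref{pos}.
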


In order to prove Theorem \ref{russian-alternative}, we need some intermediary results, which we deal with first.

\begin{lem}\label{boringconditions}
The condition  \eqref{conditions} holds for all  $q>0$.
\end{lem}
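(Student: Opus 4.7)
The plan is to reduce both integrals in \eqref{conditions} to a single finite exponential moment of $\overline{X}_t$. Since $f(s,j) = \mathrm{e}^s h_j$ with $h_\ast := \max_{j\in E} h_j < \infty$, and since $\Phi(q) \geq 0$ together with $X_u \leq \overline{X}_u$ give $\mathrm{e}^{\Phi(q) X_u} \leq \mathrm{e}^{\Phi(q) \overline{X}_u}$, the two integrands in \eqref{conditions} are dominated by $h_\ast \mathrm{e}^{(1+\Phi(q))\overline{X}_u}$ and $h_\ast^2\, \mathrm{e}^{2(1+\Phi(q))\overline{X}_u}$ respectively. Because $\overline{X}_u \leq \overline{X}_t$ for $u \leq t$, Fubini then reduces the claim to showing $\mathbb{E}_{(x,i)}[\mathrm{e}^{\beta \overline{X}_t}] < \infty$ for $\beta := 2(1+\Phi(q))$.

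Next, I would invoke the Esscher martingale from \eqref{alaGirsanov}. Specifically, the process $\mathcal{E}^{(\beta)}_t := \mathrm{e}^{\beta X_t - \kappa(\beta) t}\, v_{J_t}(\beta)$ is, up to a positive multiplicative constant, the Radon--Nikodym density in \eqref{alaGirsanov} with $\gamma = \beta$, hence a positive martingale under $\mathbb{P}_{(x,i)}$. Crucially, $\bPsi(z)$ is well-defined and finite for every $\mathrm{Re}(z) \geq 0$ as noted below \eqref{psi2}, so $\kappa(\beta)$ and indeed $\kappa(p\beta)$ are finite for every $p > 1$. Since $E$ is finite and $\boldsymbol{v}(\beta)$ has strictly positive entries, there exist constants $0 < \underline{v} \leq \overline{v} < \infty$ with $\underline{v} \leq v_j(\beta) \leq \overline{v}$ for every $j \in E$, so that
\[
\mathrm{e}^{\beta \overline{X}_t} \;=\; \sup_{s \leq t} \mathrm{e}^{\beta X_s} \;\leq\; \underline{v}^{-1}\, \mathrm{e}^{\kappa(\beta) t}\, \sup_{s \leq t} \mathcal{E}^{(\beta)}_s.
\]

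Finally, a routine application of Doob's $L^p$ maximal inequality (for any fixed $p > 1$) to the positive martingale $\mathcal{E}^{(\beta)}$ yields
\[
\mathbb{E}_{(x,i)}\!\Bigl[\sup_{s \leq t} \mathcal{E}^{(\beta)}_s\Bigr]^{p} \;\leq\; \Bigl(\tfrac{p}{p-1}\Bigr)^{p} \mathbb{E}_{(x,i)}\bigl[(\mathcal{E}^{(\beta)}_t)^{p}\bigr] \;\leq\; \Bigl(\tfrac{p}{p-1}\Bigr)^{p}\overline{v}^{p}\, \mathrm{e}^{-p\kappa(\beta) t}\, \mathbb{E}_{(x,i)}[\mathrm{e}^{p\beta X_t}],
\]
and the right-hand side is finite by \eqref{psi2} since $p\beta \geq 0$. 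Combining the displays gives $\mathbb{E}_{(x,i)}[\mathrm{e}^{\beta \overline{X}_t}] < \infty$, whence both terms in \eqref{conditions} are bounded by a constant multiple of $t\,\mathbb{E}_{(x,i)}[\mathrm{e}^{\beta \overline{X}_t}]$ via Fubini. There is no substantial obstacle: the only point to watch is that Doob's inequality is applied at a power $p > 1$ for which $\mathcal{E}^{(\beta)}_t \in L^{p}$, which the spectral-negativity assumption (finiteness of $\bPsi$ on the right half-plane) ensures automatically.
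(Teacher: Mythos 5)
Your argument is correct and reaches the same reduction as the paper---namely, both proofs first observe that everything hinges on $\mathbb{E}_{(x,i)}\bigl[{\rm e}^{2(1+\Phi(q))\overline{X}_t}\bigr]<\infty$---but from there you take a genuinely different and arguably more elementary route. The paper exploits the known form of the distribution of $\overline{X}_{\mathbf{e}_p}$ at an independent exponential time via identity \eqref{id0}, chooses $p$ large enough that $\Phi(p)$ dominates the required exponential rate, and then passes from the exponential time to a fixed time by a subordination/monotonicity argument. You instead build the positive Esscher martingale $\mathcal{E}^{(\beta)}_t={\rm e}^{\beta X_t-\kappa(\beta)t}v_{J_t}(\beta)$, exploit that $\kappa(\beta)\ge 0$ because $\beta>\Phi(q)$ to absorb the running supremum into $\sup_{s\le t}\mathcal{E}^{(\beta)}_s$, and then invoke Doob's $L^p$ maximal inequality together with finiteness of $\bPsi$ on the closed right half-plane (the spectral negativity assumption) to control the supremum. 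Both approaches need spectral negativity in an essential way: the paper for the exact law of $\overline{X}_{\mathbf{e}_p}$, you for the fact that exponential moments $\mathbb{E}[{\rm e}^{p\beta X_t}]$ of \emph{any} positive order are finite; your proof is slightly more self-contained since it avoids the detour through exponential killing and the Laplace inversion step. One cosmetic remark: in your Doob display, $\mathbb{E}_{(x,i)}\bigl[\sup_{s\le t}\mathcal{E}^{(\beta)}_s\bigr]^p$ should read $\mathbb{E}_{(x,i)}\bigl[\bigl(\sup_{s\le t}\mathcal{E}^{(\beta)}_s\bigr)^p\bigr]$; the $L^1$ bound you then need for $\mathbb{E}[{\rm e}^{\beta\overline{X}_t}]$ follows from this via Jensen (or Lyapunov). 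This is a notational slip, not a gap.
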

\begin{proof}
%For \eqref{assumption}, it suffices to verify that
%\[
%\mathbb{P}_{(x,s,i,j)}(\lim_{t\to\infty} {\rm e}^{-q t + \overline{X}_t}=0) = 1.
%\]
%Writing $\ell_t =\inf\{s>0: \overline{X}_s>t\}$, thanks to spectral negativity, we have $t\geq \ell_{\overline{X}_t}$, hence, under $\mathbb{P}_{(x,s,i,j)}$,
%\begin{align*}
%\limsup_{t\to\infty} {\rm e}^{-q t + \overline{X}_t} &\leq \limsup_{t\to\infty} {\rm e}^{-q \ell_{\overline{X}_t} + \overline{X}_t} \\
%&= \limsup_{s\to\infty}{\rm e}^{-q \ell_{s} + s} \\
%&=\limsup_{s\to\infty}{\rm e}^{-q \ell_{s} + X_{\ell_s}} \\
%&\leq  \limsup_{s\to\infty}{\rm e}^{X_{\ell_s}-\kappa(1)\ell_s}\frac{v_{J_t}(1)}{\underline{v}(1)}
%{\rm e}^{-(q-\kappa(1) )\ell_{s} },
%\end{align*}
% $\underline{v}(1) = \min_iv_i(1)>0$.  Recalling the exponential martingale in  \eqref{alaGirsanov}, we see that, since $(\ell_s, s\geq 0)$ is a sequence of stopping times, the process $({\rm e}^{X_{\ell_s}-\kappa(1)\ell_s} v_{J_t}(1), t\geq0)$ is a (super)martingale, and hence has an almost sure limit. It follows that when $q-\kappa(1) >0$, the desired almost sure limit in \eqref{assumption} holds and fails if $q-\kappa(1) <0$.
%\bigskip
%
%
For \eqref{conditions} it is sufficient to show that
\begin{equation}
\int_0^t \mathbb{E}_{(x,i)}\left[ {\rm e}^{2\Phi(q)X_u + 2\overline{X}_u}\right]\dd u<\infty,
\label{conditions-sufficient}
\end{equation}
for all $t\geq0.$ To this end, noting that $\overline{X}$ is increasing,  it would thus be sufficient to show that $ \mathbb{E}_{(x,i)}\left[ {\rm e}^{2(1+\Phi(q))\overline{X}_t}\right]<\infty$.

\bigskip

In order to show the latter, we claim that, for  $p>0$ sufficiently large, we have
\begin{equation}
 \mathbb{E}_{(x,i)}\left[ {\rm e}^{2(1+\Phi(q))\overline{X}_{\mathbf{e}_p}}\right]<\infty.
 \label{sufficienttoprove}
\end{equation}
where $\mathbf{e}_p$ is an independent and exponentially distributed random variable with rate $p$.
Note from \eqref{id0}
\begin{align*}
\mathbb{P}_{(x,i)} (\overline{X}_{\mathbf{e}_p}>t)&= \mathbb{P}_{(x,i)} (\tau^+_t< {\mathbf{e}_p})\\
&=
 \mathbb{E}_{(0,i)} [{\rm e}^{-p\tau^+_{t-x}}] \\
 &=[\bDelta _{\boldsymbol{v}}\left( \Phi(p) \right)
{\rm e}^{(t-x)( \bLambda(p)-\Phi(p) \I )}
\bDelta _{\boldsymbol{v}}\left(  \Phi(p)  \right)^{-1} 1\!\!1]_{i} .
\end{align*}
Noting that $\Phi$ is an increasing function, we can choose $p$ sufficiently large such that $2(1+\Phi(q))<\Phi(p)$, in which case, \eqref{sufficienttoprove} holds.  With \eqref{sufficienttoprove} in hand, writing it in the form
\[
p\int_0^\infty {\rm e}^{-p t} \mathbb{E}_{(x,i)}\left[ {\rm e}^{2(1+\Phi(q))\overline{X}_{t} }\right]\dd t<\infty,
\]
we are led to the conclusion that $\mathbb{E}_{(x,i)}[ {\rm e}^{2(1+\Phi(q))\overline{X}_{t} }]<\infty$ for Lebesgue almost every $t>0$. The finiteness of these expectations can be deduced for all $t>0$ by observing that they form a monotone increasing sequence in $t$.
% Indeed, excursion theory (in the appendix, need to sort out how we connect this) predicts that, since the local time at the maximum is given by $\ell= \overline{X}$ and the ladder height process ordinate is a linear drift, then
%\[
%\mathbb{P}_{(x,i)}[\overline{X}_{\mathbf{e}_p}>t | J]= \exp\left(- \int_0^t {\texttt N}_{\bar{J}_s} (1-{\rm e}^{-p\zeta}) \dd s \right).
%\]
%Note that $(\ell, \bar{J})$ is a MAP with increasing ordinate. Moreover, its Laplace matrix exponent is given by
%\[
%\boldsymbol{\Phi}(q) = \texttt{diag}( {\texttt N}_{i} (1-{\rm e}^{p\zeta}), i \in E )
%\]
\end{proof}

The following Proposition resembles the L\'evy process counterpart in Lemma 1 of \cite{avra}. It is also related to its matrix analogue given in Lemma 1 in \cite{JI}. %We give a separate proof here and identify the limit in our terminology.
\begin{prop} \label{prop3}
For $q> 0$, we have
\[
\lim_{x\to\infty}\frac{[\Z^{(q)}   1\!\!1]_j(x)}{[\W^{(q) }   1\!\!1]_j(x)} = \frac{q}{\Phi(q)}.
\]
\end{prop}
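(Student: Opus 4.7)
\emph{Plan of proof.} The proof reduces to an asymptotic computation once one rewrites $[\Z^{(q)}1\!\!1]_j$ in integral form. Since $\Q$ is the intensity matrix of a conservative chain, $\Q 1\!\!1 = 0$, so $(q\I - \Q) 1\!\!1 = q 1\!\!1$ and \eqref{Z} yields the already-observed identity \eqref{Zq1}, namely $[\Z^{(q)}(x)1\!\!1]_j = 1 + q\int_0^x [\W^{(q)}(y)1\!\!1]_j \,\dd y$. Hence the ratio in question is
\[
\frac{[\Z^{(q)}1\!\!1]_j(x)}{[\W^{(q)}1\!\!1]_j(x)} = \frac{1 + q\int_0^x [\W^{(q)}(y)1\!\!1]_j \,\dd y}{[\W^{(q)}(x)1\!\!1]_j},
\]
so the task is to pin down the growth of $[\W^{(q)}(x)1\!\!1]_j$ as $x\to\infty$ and plug it in.

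The central step is the sharp asymptotic
\[
[\W^{(q)}(x)1\!\!1]_j \sim C_j\,{\rm e}^{\Phi(q) x}\quad\text{as } x\to\infty,
\]
for some strictly positive constant $C_j$. I would establish this by combining the decomposition \eqref{Wqdecomp} with the representation \eqref{pos}. Since $((X,J),\mathbb{P}^{\Phi(q)})$ drifts to $+\infty$, the sub-stochastic rate matrix $\bLambda^*_q(y)$ is integrable at infinity, so $\exp\bigl(\int_x^\infty \bLambda^*_q(y)\,\dd y\bigr)\to \I$ as $x\to\infty$, giving $\W_{\Phi(q)}(x)\to \W_{\Phi(q)}(\infty)$. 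This identifies $C_j = [\bDelta_{\boldsymbol{v}}(\Phi(q))\,\W_{\Phi(q)}(\infty)\,\bDelta_{\boldsymbol{v}}(\Phi(q))^{-1}1\!\!1]_j$.

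With this asymptotic in hand, the conclusion is immediate: since $\Phi(q)>0$ for $q>0$, integration gives $\int_0^x[\W^{(q)}(y)1\!\!1]_j\,\dd y\sim (C_j/\Phi(q))\,{\rm e}^{\Phi(q)x}$, so the numerator behaves like $(qC_j/\Phi(q))\,{\rm e}^{\Phi(q)x}$ while the denominator behaves like $C_j\,{\rm e}^{\Phi(q)x}$, and the ratio tends to $q/\Phi(q)$. (Alternatively, L'Hopital's rule applies since both quantities diverge.)

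The main obstacle I anticipate is verifying that $C_j>0$, so that the common factor may be cancelled in the final ratio. A probabilistic route is to use \eqref{id1} read under $\mathbb{P}^{\Phi(q)}$ (with discounting rate zero), which identifies $[\W_{\Phi(q)}(x)\W_{\Phi(q)}(a)^{-1}]_{i,j}$ with $\mathbb{P}^{\Phi(q)}_{(x,i)}(\tau_a^+<\tau_0^-, J_{\tau_a^+}=j)$. Letting $a\to\infty$ and using that $(X,J)$ drifts to $+\infty$ under $\mathbb{P}^{\Phi(q)}$ together with the irreducibility of $J$, one obtains a strictly positive limit. This forces the relevant row sums of $\W_{\Phi(q)}(\infty)$, after conjugation by the strictly positive diagonal matrix $\bDelta_{\boldsymbol{v}}(\Phi(q))$, to be strictly positive, confirming $C_j>0$ for every $j\in E$.
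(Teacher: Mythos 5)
Your proof is correct and takes essentially the same route as the paper's: both hinge on \eqref{Wqdecomp} and \eqref{pos} to deduce $\W_{\Phi(q)}(x)\to\W_{\Phi(q)}(\infty)$, the paper packaging this via an integration-by-parts identity and dominated convergence, while you state the asymptotic $[\W^{(q)}1\!\!1]_j(x)\sim C_j{\rm e}^{\Phi(q)x}$ directly and integrate. You are also right to flag that the proof needs $C_j>0$ (equivalently $[\W^{(q)}1\!\!1]_j(x)\to+\infty$), a point the paper leaves tacit in this proposition but imposes explicitly as a hypothesis in case (ii) of Theorem \ref{russian-alternative}, where the result is applied.
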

\begin{proof}
From \eqref{Z}, recall that
\[
[\Z^{(q)}   1\!\!1]_j(x) = 1 + q\int_0^x [\W^{(q)}1\!\!1]_j(y)\dd y, \qquad x\geq 0.
\]

By \eqref{Wqdecomp}  and integration by parts, we get
\begin{align*}
[\Z^{(q)}   1\!\!1]_j(x) & = 1+\frac{q}{\Phi(q)}([\W^{(q)}1\!\!1]_j(x) - [\W^{(q)}1\!\!1]_j(0+)) \\
&\quad -  \frac{q}{\Phi(q)}\int_0^x{\rm e}^{\Phi(q)y}  [\bDelta _{\boldsymbol{v}}\left( \Phi(q)\right)
\W'_{\Phi(q)} (y+)
\bDelta _{\boldsymbol{v}}\left( \Phi(q)\right)^{-1}1\!\!1]_j\,\dd y\: .
\end{align*}
Then, the result follows once we show that
\begin{equation}  \label{ratio}
\lim_{x\to\infty}\frac{\int_0^x{\rm e}^{\Phi(q)(y-x)}  [\bDelta _{\boldsymbol{v}}\left( \Phi(q)\right)
\W'_{\Phi(q)} (y+)
\bDelta _{\boldsymbol{v}}\left( \Phi(q)\right)^{-1}1\!\!1]_j\dd y} { [\bDelta _{\boldsymbol{v}}\left( \Phi(q)\right)
\W_{\Phi(q)} (x)
\bDelta _{\boldsymbol{v}}\left( \Phi(q)\right)^{-1}1\!\!1]_j     } = 0\, ,
\end{equation}
where \eqref{Wqdecomp} is used  for $[\W^{(q)}1\!\!1]_j$. Now, in view of \eqref{pos},  we have
\[
\lim_{x\to\infty} || \W_{\Phi(q)}(x) ||<\infty
\]
%{\color{red} (here and below, what does $|\cdot|$ mean? Is this matrix norm?)}
and hence  the denominator in \eqref{ratio} has a finite limit as $x\rightarrow \infty$ as well as
\begin{equation}  \label{integrable}
||\W_{\Phi(q)}(\infty) 1\!\!1|| = \left|\left| \int_0^\infty\W'_{\Phi(q)}(y)  1\!\!1\, \dd y \right|\right| < \infty \; .
\end{equation}
Note from \eqref{pos}, for $x\geq 0$,
\[
\exp\left(\int_x^\infty \bLambda^*_q (u)\dd u\right)
\]
is a semigroup, i.e. it has entries which are probabilities, Moreover its limit as $x\to\infty$ is the identity matrx.
 Hence
\begin{eqnarray}
\int_0^\infty | [\W'_{\Phi(q)}(y+) 1\!\!1]_{j} | \, \dd y &=&
\lim_{x\to\infty}
\int_0^x | [\W'_{\Phi(q)}(y+) 1\!\!1]_{j} | \, \dd y \nonumber\\
& \leq  & \lim_{x\to\infty} \int_0^x \left[\frac{\dd}{\dd y} \left( \exp\left(\int_y^\infty \bLambda^*_q (u)\dd u\right)\right)1\!\!1\right]_j \, \dd y \; ||\W_{\Phi(q)}(\infty) 1\!\!1|| \nonumber \\
&=&\lim_{x\to\infty}\left[ \exp\left(\int_x^\infty \bLambda^*_q (u)\dd u\right)1\!\!1 \right]_j ||\W_{\Phi(q)}(\infty) 1\!\!1|| \nonumber \\
 & = & ||\W_{\Phi(q)}(\infty) 1\!\!1||< \infty \; . \label{new}
\end{eqnarray}
by \eqref{integrable}.
  On the other hand, the absolute value of the numerator of \eqref{ratio}  is bounded by
\begin{equation}
\int_0^\infty 1_{[0,x]}(y)\, {\rm e}^{\Phi(q)(y-x)}
\left|\left[\bDelta _{\boldsymbol{v}}\left( \Phi(q)\right)
\W'_{\Phi(q)} (y+)\bDelta _{\boldsymbol{v}}\left( \Phi(q)\right)^{-1}1\!\!1 \right]_j\right|
\dd y \, \,
\label{delt}
\end{equation}
Thanks to the fact that  $\bDelta _{\boldsymbol{v}}\left( \Phi(q)\right)$ and $\bDelta _{\boldsymbol{v}}\left( \Phi(q)\right)^{-1}$ are diagonal with bounded positive entries, up to a multiplicative constant, the integrand in \eqref{delt} can be bounded by the integrand on the  left hand side of \eqref{new}.
%\[
%\int_0^\infty 1_{[0,x]}(y)\, {\rm e}^{\Phi(q)(y-x)}
%[\W'_{\Phi(q)} (y+)\,1\!\!1]_j
%\dd y
%\]
% satisfies
%%integrand is dominated for all $x\ge 0$  by the integrand in \eqref{integrable}.
%\[
%  1_{[0,x]}(y)\, {\rm e}^{\Phi(q)(y-x)}
%|\W'_{\Phi(q)} (y+)| \le |\W'_{\Phi(q)} (y+)|
%\]
%for each $y\ge 0$. %Note that $\W'_{\Phi(q)}(y+)$ is non-negative in view of part (i) of Theorem \ref{ZWlem}  and the positivity of $\W(a)$.
The limit \eqref{ratio} now holds by dominated convergence theorem in view of \eqref{new}.
\end{proof}

\begin{proof}[Proof  (of Theorem \ref{russian-alternative})]  By Lemma \ref{boringconditions} and Remark \ref{taugrem} in the Appendix, we see that conditions (ii) and (iv) of Theorem \ref{main} hold. Conditions (iii) and (v) are moot.  Following the remarks preceding the statement of Theorem \ref{russian-alternative}, taking note of Remark \ref{dontneedgdiffeq}, we are thus left with verifying the replacement of condition (i) of Theorem \ref{main}, in the form \eqref{gODE2}, which requires that
\begin{equation}
g'(s,j)\leq  1 -  \frac{[\Z^{(q)} (g(s,j)) 1\!\!1]_{j}}{ [ \Z^{(q)\prime} (g(s,j))1\!\!1]_{j}},\qquad j\in E
\label{Russiang}
\end{equation}
also keeping in mind that
\begin{equation} \label{aj}
    g(s,j) \le a(j) ,\qquad j\in E
\end{equation} where $a(j)$ was defined in Corollary \ref{rem2}.
In fact, we are going to abandon checking condition (i) of Theorem \ref{main} and verify instead the weaker condition \eqref{Russiang}, that is implied by that condition in the proof of Theorem \ref{main}.

\bigskip

Taking inspiration from the solution to the Shepp-Shiryaev optimal stopping problem when driven by a spectrally negative L\'evy process, see e.g. \cite{shep1, shep2, avra}, we look for a solution to the system \eqref{Russiang} which takes the form $g(s,j) = c_j$, where $c_j$ are constants in $[0,\infty]$, for each $j\in E$, and compare $c_j$ with $a(j)$. In that case, \eqref{Russiang} tells us that we are looking for $x$ satisfying
\begin{equation}
u_j(x) = [\Z^{(q)} 1\!\!1]_{j}(x) -q [ \W^{(q)}1\!\!1]_{j}(x) \leq 0, \qquad x\leq 0.
\label{roots}
\end{equation}

Remark \ref{necessary} indicates that we should necessarily seek a minimal solution to \eqref{Russiang} and hence we should seek the smallest solution of \eqref{roots}. As such, $c_j$ would then satisfy the definition $c_j = \inf\{x\geq 0: u_j(x)\leq 0\}$ as long as it also satisfies \eqref{aj}.

%
%{\color{red} Let us remove the following paragraph, no need.}
%To this end, we start by noting that the functions $u_j$ are differentiable almost everywhere on $(0,\infty)$, and their right-continuous derivatives are given by
%\begin{eqnarray*}
%u^\prime_j(x)&= & [q\W^{(q)}   1\!\!1]_j(x)-[q\W^{(q)\prime }   1\!\!1]_j(x) \\
%&= &
%\left[\bDelta _{\boldsymbol{v}}\left( \Phi(q)\right)
%q {\rm e}^{\Phi(q)x}
%[(1- \Phi(q))  \W_{\Phi(q)}(x)-\W_{\Phi(q)}^\prime (x)
%] \bDelta _{\boldsymbol{v}}\left( \Phi(q)\right)^{-1} 1\!\!1 \right]_j(x)\, .
%\end{eqnarray*}
%{\color{red} up to here}

\bigskip

We will proceed with our analysis in the  settings given in the statement of the theorem.
For the three cases (a)-(c),
we have that  $\Phi(q)>1$ (equiv. $q>\kappa(1)$).
Then, by Proposition \ref{prop3}, we get
\begin{equation}
\lim_{x\rightarrow \infty} \frac{u_j (x)}{q[\W^{(q)}   1\!\!1]_j(x)}=\frac{1}{\Phi(q)}-1< 0 \; .
\label{limit}
\end{equation}

%\[
%\lim_{x\rightarrow \infty} u_j (x)=\frac{1}{\Phi(q)} -1 < 0 \; .
%\]
As such, one way to verify that a root in the sense of \eqref{roots} exists is to consider the  value of $u_j(0+)$ and the sign of $\lim_{x\rightarrow \infty} [\W^{(q)}   1\!\!1]_j(x).$
\bigskip

First, consider the case  $0\le [\W^{(q)}1\!\!1]_j(0+)<q^{-1}$. When $j\in E^{ubv}$, then $[\W^{(q)}1\!\!1]_j(0+)=0$ and we have $u_j(0+)=1$. On the other hand, when $j\in E^{bv}$ and $[\W^{(q)}1\!\!1]_j(0+)<q^{-1}$, then also on account of the fact that $[\Z^{(q)}1\!\!1]_j(0+)=1$, it follows that $u_j (0+)>0$. Also,
  $\lim_{x\rightarrow \infty} u_j(x)=-\infty$   by the assumption $\lim_{x\rightarrow \infty} [\W^{(q)}   1\!\!1]_j(x)=+\infty$ and \eqref{limit}.  This ensures that there is at least one root $c_j$ in $(0,\infty)$ and $\tau_c$ is the optimal stopping time under the assumption that  $c_j \le a(j)$.
When  $[\W^{(q)}1\!\!1]_j(0+)\ge q^{-1}$, we note that it is automatically the case that $u_j(0+)\leq 0$. In that case, \eqref{Russiang}  is satisfied by taking $c_j = 0$ and it is optimal as $a(j)>0$ and $c_j<a(j)$ holds trivially.

\bigskip
\end{proof}

%\begin{rem}\rm  In Theorem \ref{russian-alternative},   we assume  $\lim_{x\rightarrow \infty} [\W^{(q)}   1\!\!1]_j(x)=+\infty$, $j\in E$ as  otherwise we cannot guarantee that   $\lim_{x\rightarrow \infty} u_j(x)=-\infty$ which is needed to show that \eqref{Russiang} holds for  constant $g$.
%\end{rem}

\begin{prop}
When $0<q\leq \kappa(1)$, the  right-hand side of \eqref{OST} is unbounded, which cannot be attained in an almost surely finite time.
\end{prop}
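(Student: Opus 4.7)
The plan is to combine a simple monotonicity argument in the discount rate $q$ with the explicit value formula from Theorem \ref{russian-alternative}, which is valid for $q$ just above the critical value $\kappa(1)$. For every almost surely finite stopping time $\tau$, the map $q \mapsto \mathbb{E}_{(x,s,i,j)}[e^{-q\tau}f(\overline{X}_\tau,\bar{J}_\tau)]$ is pointwise non-increasing in $q$, and so is the value $V(x,s,i,j;q)$. Hence it suffices to exhibit $(x,s,i,j)$ for which $V(x,s,i,j;q') \to +\infty$ as $q' \downarrow \kappa(1)$, since then monotonicity delivers $V(x,s,i,j;q) \geq \lim_{q' \downarrow \kappa(1)} V(x,s,i,j;q') = +\infty$ for every $q \leq \kappa(1)$.

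To produce the blow-up, I would fix a state $j$ for which case (ii) of Theorem \ref{russian-alternative} applies throughout a right-neighbourhood of $\kappa(1)$; any $j \in E^{\texttt{ubv}}$ works (since $W^{(q')}_j(0+) = 0 < (q')^{-1}$), while if $E^{\texttt{ubv}}$ is empty one can select $j \in E^{\texttt{bv}}$ with $a_j > \kappa(1)$, whose existence is ensured by the Perron--Frobenius-type bound $\kappa(1) \leq \max_k a_k$ together with irreducibility of the modulator. For such a $j$, Theorem \ref{russian-alternative} gives $V(x,s,i,j;q') = h_j e^s [\Z^{(q')}(x-s+c_j(q'))1\!\!1]_i$, with $c_j(q')$ the smallest root of $u_j(x) := [\Z^{(q')}1\!\!1]_j(x) - q'[\W^{(q')}1\!\!1]_j(x) \leq 0$. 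Proposition \ref{prop3} yields $u_j(x)/[\W^{(q')}1\!\!1]_j(x) \to q'/\Phi(q') - q'$ as $x \to \infty$, which tends to $0^-$ as $q' \downarrow \kappa(1)$ since $\Phi(q') \downarrow 1$; this forces the zero-crossing to escape to infinity, $c_j(q') \to +\infty$. Combining this with the asymptotic $[\W^{(q')}(y)1\!\!1]_i \sim C_i(q')\, e^{\Phi(q')y}$ from decomposition \eqref{Wqdecomp}, one deduces $[\Z^{(q')}(c_j(q'))1\!\!1]_i \to +\infty$, so $V(x,s,i,j;q') \to +\infty$ and the first half of the claim follows. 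The assertion that the supremum cannot be attained by any almost surely finite stopping time then follows because, under the integrability hypothesis \eqref{conditions} (as verified in Lemma \ref{boringconditions} for this setting), the expected reward for any specific admissible $\tau$ is finite, whereas the supremum is infinite.

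The main obstacle I expect is ensuring that at least one state $j$ genuinely remains in case (ii) of Theorem \ref{russian-alternative} throughout a one-sided neighbourhood of $\kappa(1)$, and then establishing the divergence $c_j(q') \to \infty$ rigorously in that neighbourhood. Ruling out the degenerate configuration in which every state simultaneously falls into case (i) near $\kappa(1)^+$ is the delicate point; this relies on the inequality $\kappa(1) \leq \max_k a_k$ becoming strict once one invokes irreducibility of the modulator and the standing assumption that the MAP genuinely admits both upward and downward movements.
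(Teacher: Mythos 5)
Your strategy is genuinely different from the paper's and, as written, has real gaps that prevent it from closing.

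The paper's proof is a short, direct argument that does not rely on Theorem \ref{russian-alternative} at all. It tilts by the Esscher transform \eqref{alaGirsanov} with $\gamma=1$, uses $q\leq\kappa(1)$ to drop the factor $e^{-(q-\kappa(1))\tau}\geq 1$, and observes that under $\mathbb{P}^1$ the ordinate drifts to $+\infty$ (so $\overline{X}-X$ is recurrent at $0$). The stopping time $\tau=\inf\{t>0:\overline{X}_t-X_t>k\}$ is therefore $\mathbb{P}$-a.s.\ finite for every $k$, and a lower bound of the form
\begin{equation*}
\mathbb{E}_{(x,s,i,j)}\bigl[e^{-q\tau}e^{\overline{X}_\tau}h_{\bar J_\tau}\bigr]\geq \text{const}\cdot e^{k}
\end{equation*}
lets $k\to\infty$ and finishes in two lines. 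Because the witnessing $\tau$'s are themselves a.s.\ finite, the ``cannot be attained'' clause follows immediately.

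Your monotonicity-in-$q$ plan is attractive in principle, but it leans on Theorem \ref{russian-alternative} holding in a right neighbourhood of $\kappa(1)$, and that theorem is conditional. Case (ii) carries the unverified growth hypothesis $\lim_{x\to\infty}[\W^{(q')}1\!\!1]_j(x)=+\infty$, and the whole conclusion additionally requires $c_j\leq a(j)$ for all $j$. Neither is automatic; in fact the paper's own numerical example at $q=1.5$ exhibits a state with $c_2=\infty$, i.e.\ a $q$ above $\kappa(1)$ at which Theorem \ref{russian-alternative} gives no value formula at all. So you cannot take for granted that $V(\cdot;q')$ is even finite and given by that closed form as $q'\downarrow\kappa(1)$. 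Second, your deduction that $c_j(q')\to\infty$ is not rigorous: Proposition \ref{prop3} controls the limit of the ratio $u_j(x)/[\W^{(q')}1\!\!1]_j(x)$ as $x\to\infty$, but that only fixes the \emph{eventual} sign of $u_j$; it does not by itself push the \emph{first} zero of $u_j$ out to infinity as $q'$ varies, without some monotonicity or uniformity in $q'$ that you have not established. Third, even if $c_j(q')\to\infty$, you then need the joint limit $[\Z^{(q')}(c_j(q'))1\!\!1]_i\to\infty$, which again calls for uniform control in the two-variable limit. Finally, your appeal to the ``Perron--Frobenius bound $\kappa(1)\leq\max_k a_k$'' to guarantee a case-(ii) state near $\kappa(1)^+$ is plausible heuristically (row-sum bounds for Metzler matrices plus $\psi_k(1)\leq a_k$), but it is an extra structural fact one would need to verify, whereas the paper's argument makes no such excursion. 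In short: your approach, even if it could be repaired, is far heavier than the one-measure-change proof the paper gives, and as it stands the chain of implications leaning on Theorem \ref{russian-alternative} does not hold.
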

\begin{proof}
 When $\Phi(q) \leq  1$, for all almost surely finite stopping times $\tau$, we can work with the estimate
\begin{align}
\mathbb{E}_{(x,s, i,j)}[{\rm e}^{-q\tau}{\rm e}^{\overline{X}_\tau}h_{\bar{J}_\tau}]
&\geq\underline{h}\frac{\underline{v}(1)}{\overline{v}(1)} \mathbb{E}_{(x,s, i,j)}\left[{\rm e}^{-\kappa(1)\tau}{\rm e}^{{X}_\tau}{\rm e}^{\overline{X}_\tau - X_\tau}\frac{v_{J_\tau}(1)}{v_i(1)}{\rm e}^{-(q-\kappa(1))\tau}\right]\notag\\
& \geq  \underline{h}\frac{\underline{v}(1)}{\overline{v}(1)} \mathbb{E}_{(x,s, i,j)}^1\left[{\rm e}^{\overline{X}_\tau - X_\tau}\right], \qquad t\geq0,
\label{liminf}
\end{align}
where  $\underline{h} = \min_i h_i$,  $\underline{v}(1) = \min_{i}v_i(1)$ and $\overline{v}(1) = v_i(1)$.
On account of the fact that $\kappa(1)\geq q>0$, the process $(X, J)$ under $\mathbb{P}^1$ is such that the ordinate drifts to $+\infty$, and hence the point $0$ is recurrent for $(\overline{X}-X)$. Now consider stopping time $\tau = \inf\{t>0: \overline{X}_t - X_t>k\}$ for any $k$, which, from Remark \ref{taugrem} in the Appendix, shows that $\tau$ is a $\mathbb{P}$-almost surely finite stopping time. In \eqref{liminf} we see that
\[
\mathbb{E}_{(x,s, i,j)}[{\rm e}^{-q\tau}{\rm e}^{\overline{X}_\tau}h_{\bar{J}_\tau}]\geq\underline{h}\frac{\underline{v}(1)}{\overline{v}(1)} {\rm e}^{k}
\]
and hence, since $k$ can be taken arbitrarily large, it follows that
\[
\sup_\tau \mathbb{E}_{(x,s, i,j)}[{\rm e}^{-q\tau}{\rm e}^{\overline{X}_\tau}h_{\bar{J}_\tau}] = \infty,
\]
where the supremum is taken over all $\mathbb{P}$-almost surely finite stopping times, and the supremum is attained by never stopping.
\end{proof}

Finally, we consider two examples that have appeared in related work. In \cite[Sec.7]{kypal}, the scale matrix is identified for a MAP consisting of  Wiener processes and an independent Markov chain with no intermediate jumps as
\[
\W^{(q)}(x)=\HH {\rm diag}\; (\sinh(x\sqrt{2(q-\lambda_{1})}\,),   \ldots, \sinh(x\sqrt{2(q-\lambda_{N})}\,)\,)  \, \HH^{-1} \; ,
\]
where $\HH$ is the matrix of eigenvectors of $\Q$ and $\lambda_i$ are the corresponding eigenvalues. Hence, $Q=\HH {\rm diag}(\lambda_i) \HH^{-1}  $. For $N=2$, we simply write $q_{1,2} = q_1$, $q_{2,1}=q_2$ and observe that $\lambda_1=0$ and $\lambda_2=-(q_1+q_2)$ with eigenvectors $(1,1)$ and $(q_1,-q_2)$, respectively.
Then, the scale matrix is given by
\begin{eqnarray*}
\W^{(q)}(x)&=&\frac{1}{q_1+q_2}\left[\begin{array}{cc}
1 & q_1 \\
1 & -q_2
\end{array}\right]\left[\begin{array}{cc}
\sinh (x\sqrt{2 q }) & 0 \\
0 & \sinh (x\sqrt{2 (q+q_1+q_2) })
\end{array}\right]\left[\begin{array}{cc}
q_2 & q_1 \\
1 & -1
\end{array}\right]  \\
&=&
 \left[\begin{array}{cc}
\frac{q_2\sinh (x\sqrt{2 q })+q_1\sinh (x\sqrt{2 (q+q_1+q_2) })}{q_1+q_2} & \frac{q_1\sinh (x\sqrt{2 q })-q_1\sinh (x\sqrt{2 (q+q_1+q_2) })}{q_1+q_2} \\
\frac{q_2\sinh (x\sqrt{2 q })-q_2\sinh (x\sqrt{2 (q+q_1+q_2) })}{q_1+q_2} & \frac{q_1\sinh (x\sqrt{2 q })+q_2\sinh (x\sqrt{2 (q+q_1+q_2) })}{q_1+q_2}
\end{array}\right]\; .
\end{eqnarray*}
In this case, the row sums of the scale matrix is positive, that is, $[\W^{(q)} 1\!\!1](x)>0$, whereas the individual entries can be negative for some $x>0$.  Moreover, $\lim_{x\rightarrow\infty}[\W^{(q)} 1\!\!1]_j(x)=+\infty$. Then, by Theorem \ref{russian-alternative}, the optimal stopping problem has a solution with $c_j\ge 0$, $j=1,2$, as given there since $a(1)=a(2)=\infty$.

\bigskip

Another example is a two-dimensional spectrally negative MAP  considered in \cite{ivanovs}, where $X^1$ is taken as a Brownian motion with variance 1 and drift -1, $X^2$ as a compound Poisson process with arrival rate 1, exponential jumps of rate 3, and deterministic drift 2,  $U_{1,2}=0$, $U_{2,1}$ has an Erlang distribution with 2 phases and rate 2, and the transition rates are $q_{1,2}=3$, $q_{2,1}=1$. The matrix $\bPsi$ of \eqref{e:MAP F}
 is given by
 \begin{eqnarray*}
\bPsi(\beta)= \left[\begin{array}{cc}
 -3-\beta +\frac{\beta^2}{2}& \frac{12}{(2+\beta)^2} \\
 1 &  -2+2\beta +\frac{3}{3+\beta}
\end{array}\right]\; .
\end{eqnarray*}
Note that by Theorem \ref{ZWlem}, $[\W^{(q)} 1\!\!1]_1(0+)= W^{(q)}_1(0+)=0$ and $[\W^{(q)} 1\!\!1]_2(0+)= W^{(q)}_2(0+)=1/2$ in view of \cite[Lem.3.1]{KKR}.
For $q=1.5$, we have plotted $[\W^{(q)} 1\!\!1]_j(x)$ and $u_j(x)$ in Fig.\ref{fig:q1.5_1} and Fig.\ref{fig:q1.5_2} for $j=1$ and $j=2$, respectively. Note that for both $j=1,2$, $[\W^{(q)} 1\!\!1]_j(0+) \in [0,q^{-1})$ holds as in case (ii) of Theorem \ref{russian-alternative}.
Since  $[\W^{(q)} 1\!\!1]_1(x)$ is increasing and positive, we predict that $a(1)=\infty$ and we find that $c_1=0.26$. On the other hand, $[\W^{(q)} 1\!\!1]_2(x)$ becomes negative for $x>0.87$, which implies that $0.87<a(2)<\infty$. Although $[\W^{(q)} 1\!\!1]_2(x)$ may not tend to $\infty$, if $u_2(x)=0$ for some $x>0$, we could identify $c_2 \in (0,\infty)$ and argue about optimality   as in the Proof of Theorem \ref{russian-alternative}.
However, we cannot conclude about the solution of the optimal stopping problem (\ref{OST}) since it seems that $c_2=\infty$ from Fig.\ref{fig:q1.5_2} (b). 

\bigskip

When $q=1.8$, again $[\W^{(q)} 1\!\!1]_j(0+) \in [0,q^{-1})$, $j=1,2$, holds and the behavior of  $[\W^{(q)} 1\!\!1]_1(x)$ is similar to $q=1.5$ with $a(1)=\infty$ and
$c_1=0.23$. Moreover, the graph of $[\W^{(q)} 1\!\!1]_2(x)$ and $u_2(x)$ given in Fig.\ref{fig:q1.8_2} show that $0.88 <a(2)<\infty$ and $c_2= 0.17<a(2)$. Therefore, the  optimal stopping problem (\ref{OST}) has a solution with $\tau_{c}=\inf\{t\ge 0:\overline{X}_{t}-X_t\ge c_{\bar{J}_t}\}$, similarly to Theorem  \ref{russian-alternative}, where a sufficient condition for the existence of a root $c_j\in (0, \infty)$ is provided in case (ii).

\bigskip
For $q=5$, the relevant functions are plotted in Fig.\ref{fig:q5_1} and Fig.\ref{fig:q5_2}. In this case, (ii) and (i)  of Theorem \ref{russian-alternative} are applicable to $j=1$ and $j=2$, respectively. It seems that $a(1)=\infty$ in this case as well and  $1.2<a(2)<\infty$. We find that $c_1=0.1$ and $c_2=0$. It follows that there is a solution to the optimal stopping problem.

\begin{figure}[h!]
    \centering
    \includegraphics[width = 8.1cm]{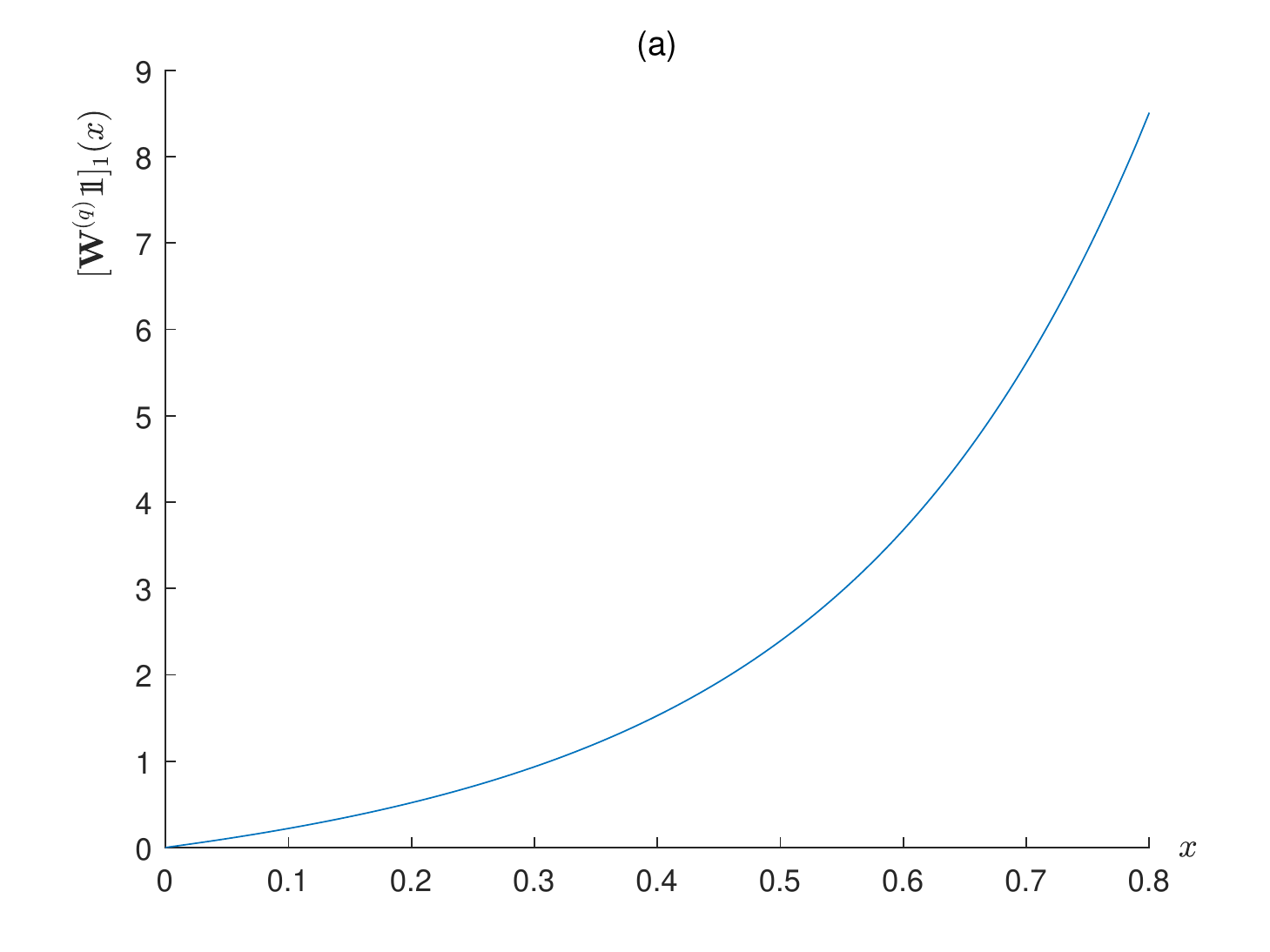}
    \includegraphics[width = 8.1cm]{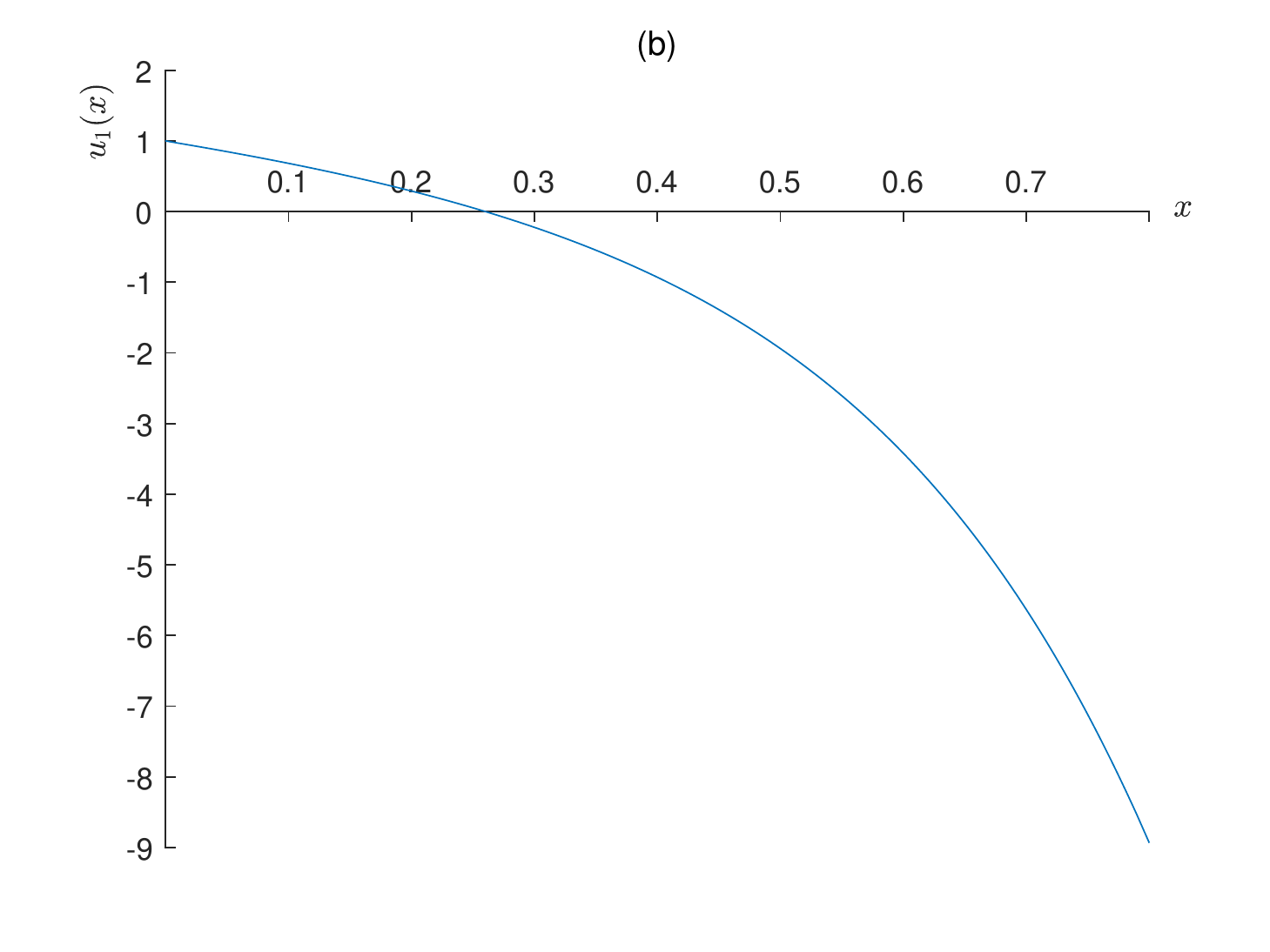}
    \caption{For $q=1.5$, (a)  $[\W^{(q)} 1\!\!1]_1(x)$,  (b) $u_1(x)=[\Z^{(q)} 1\!\!1]_{1}(x) -q [ \W^{(q)}1\!\!1]_{1}(x)$  %{\color{red} Could you put (a) rather than a) etc.}
}
    \label{fig:q1.5_1}
\end{figure}

\begin{figure}[h!]
    \centering
    \includegraphics[width = 8.1cm]{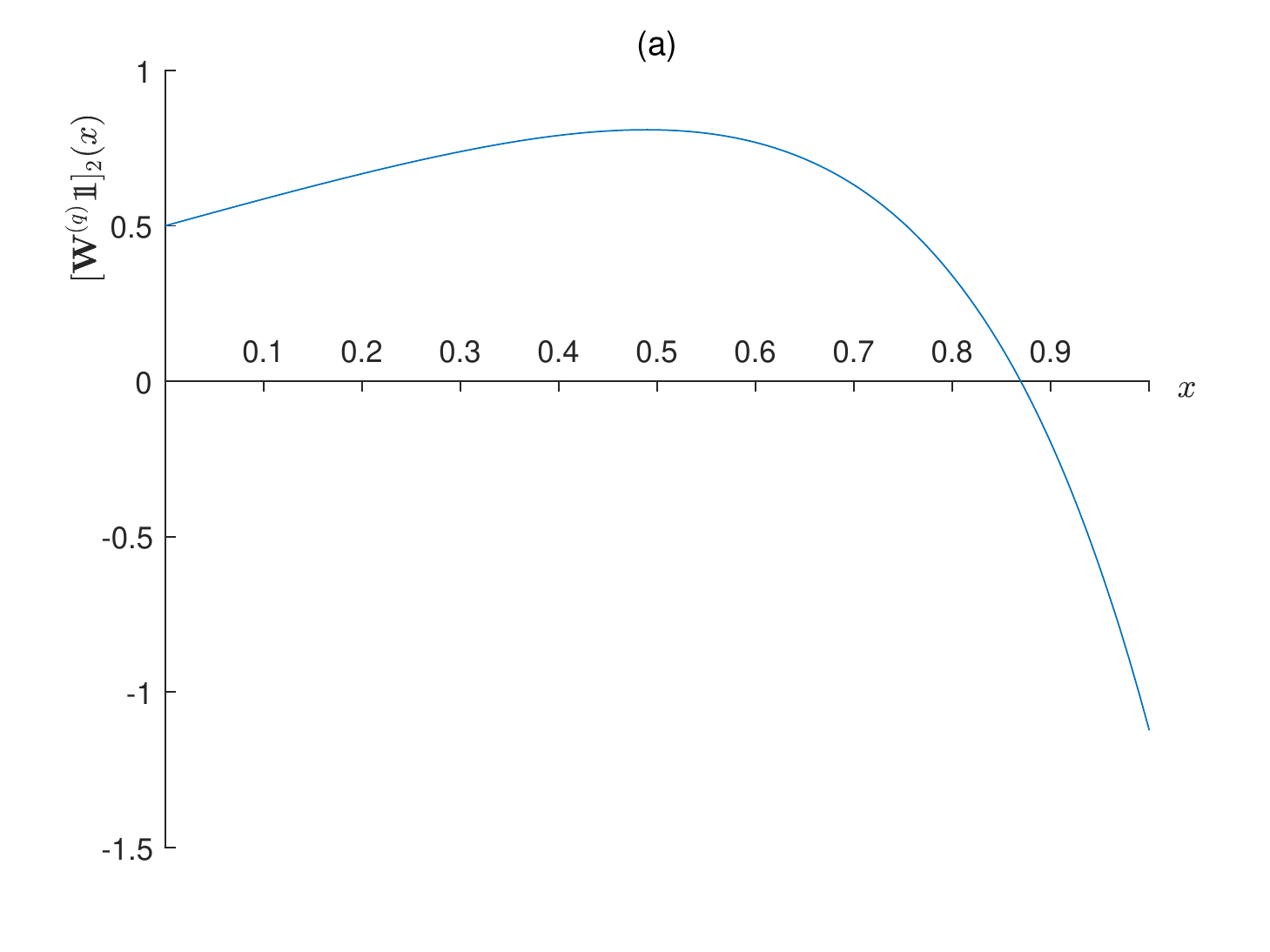}
    \includegraphics[width = 8.1cm]{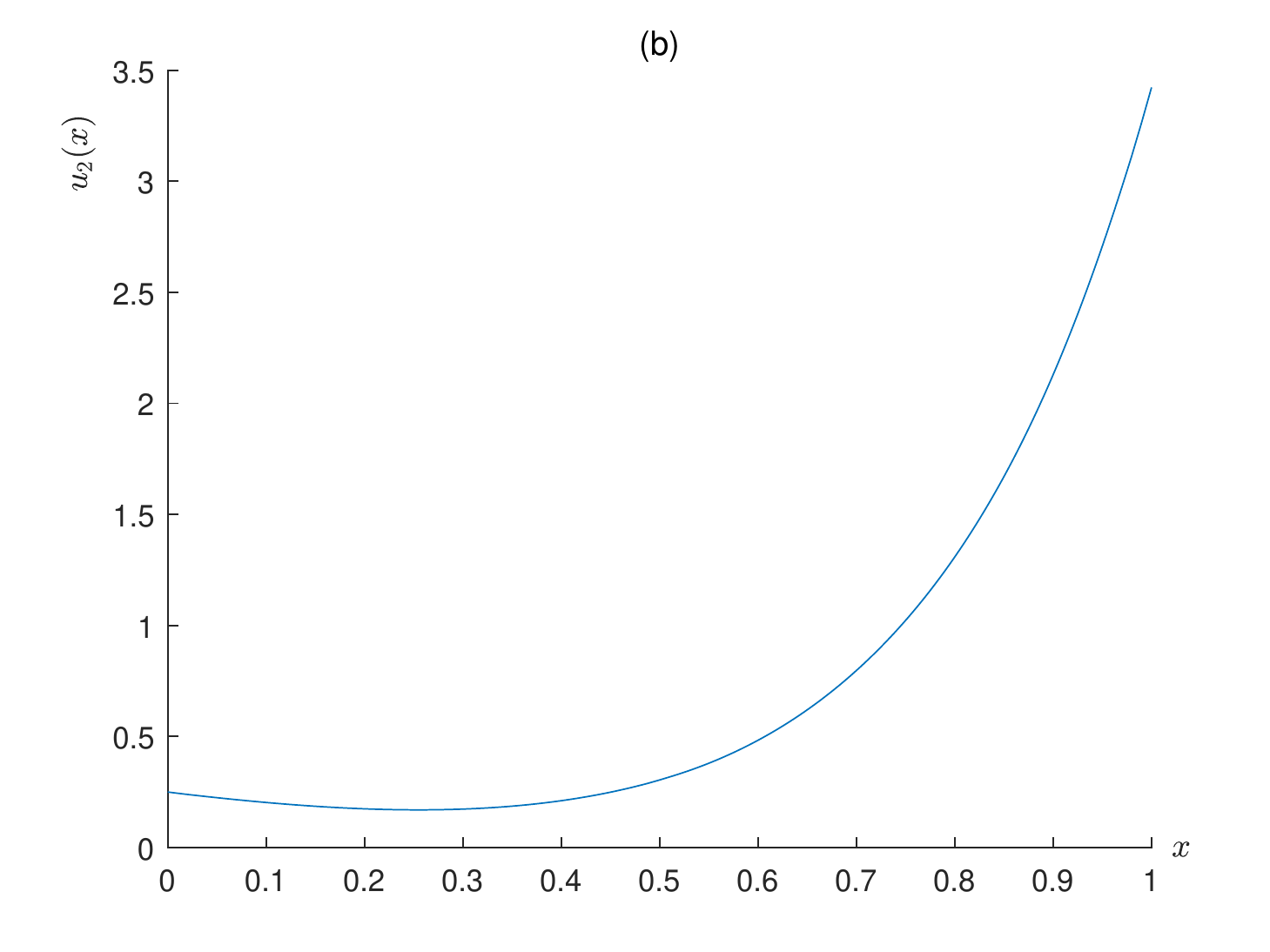}
    \caption{For  $q=1.5$, (a)  $[\W^{(q)} 1\!\!1]_2(x)$,  (b) $u_2(x)=[\Z^{(q)} 1\!\!1]_{2}(x) -q [ \W^{(q)}1\!\!1]_{2}(x)$ %{\color{red} (a) rather than a etc }
}
    \label{fig:q1.5_2}
\end{figure}

\begin{figure}[h!]
    \centering
    \includegraphics[width = 8.1cm]{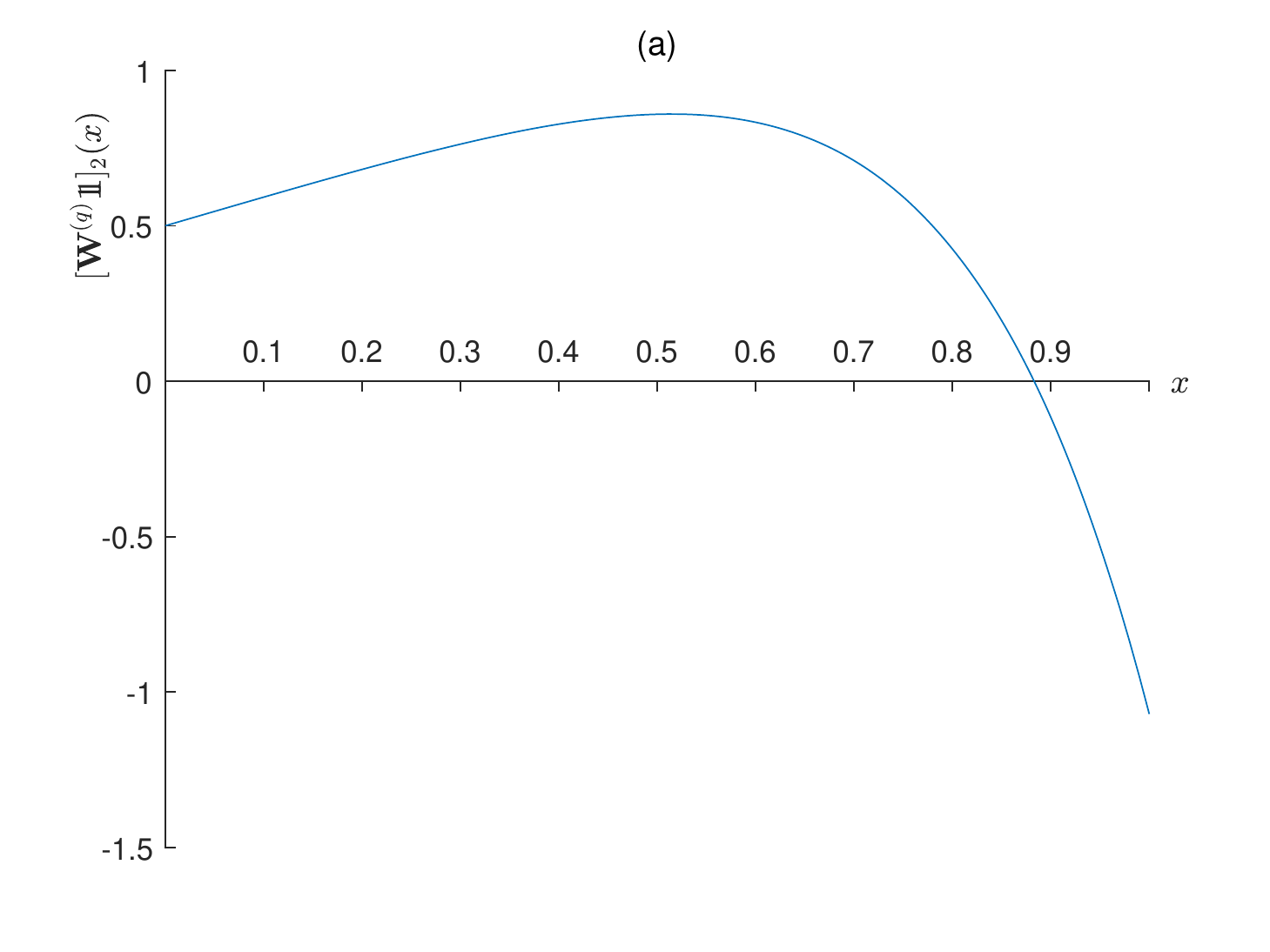}
    \includegraphics[width = 8.1cm]{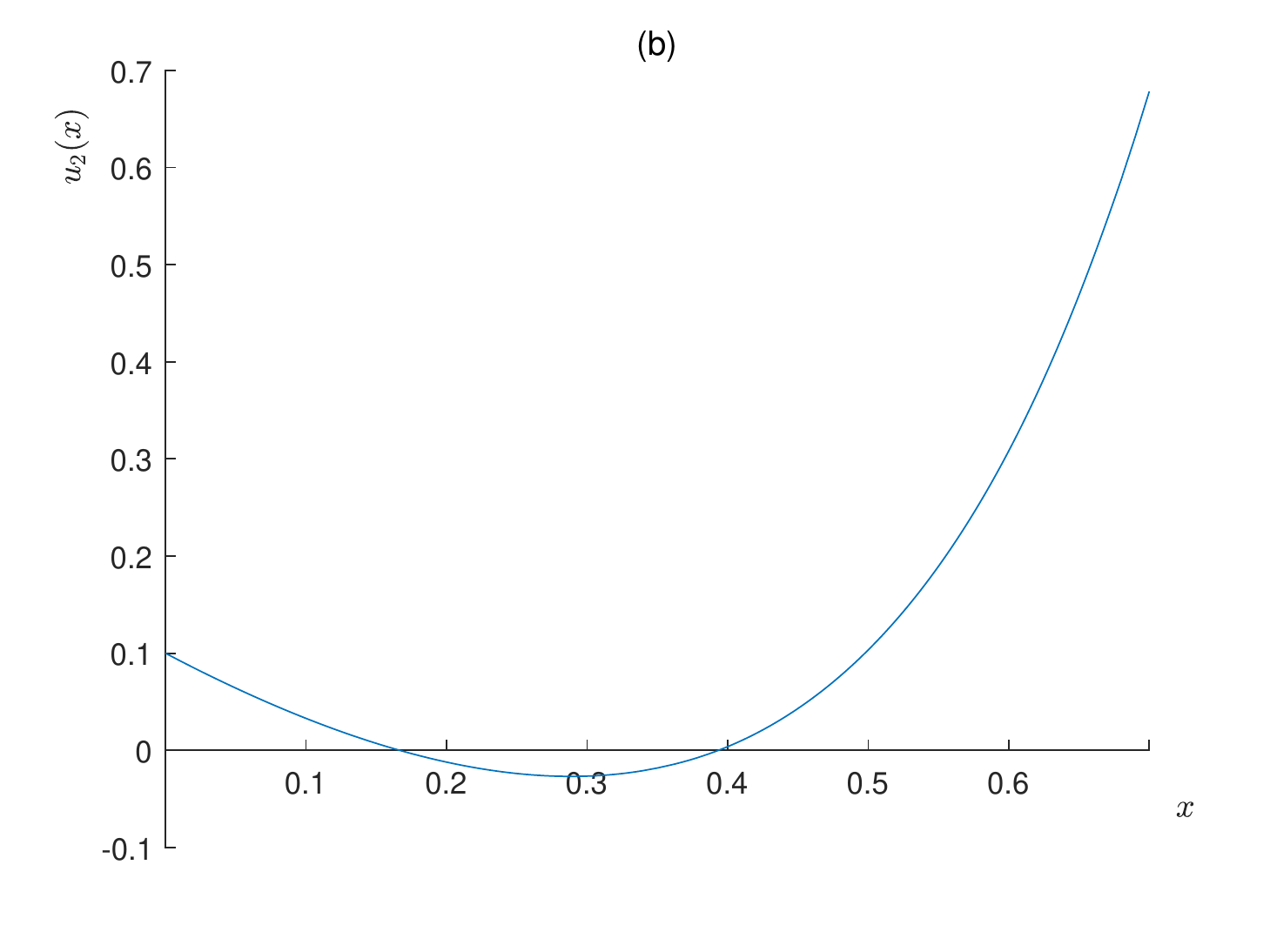}
    \caption{For  $q=1.8$, (a)  $[\W^{(q)} 1\!\!1]_2(x)$,  (b) $u_2(x)=[\Z^{(q)} 1\!\!1]_{2}(x) -q [ \W^{(q)}1\!\!1]_{2}(x)$ %{\color{red} (a) rather than a etc }
}
    \label{fig:q1.8_2} 
\end{figure}

\begin{figure}[h!]
    \centering
    \includegraphics[width = 8.1cm]{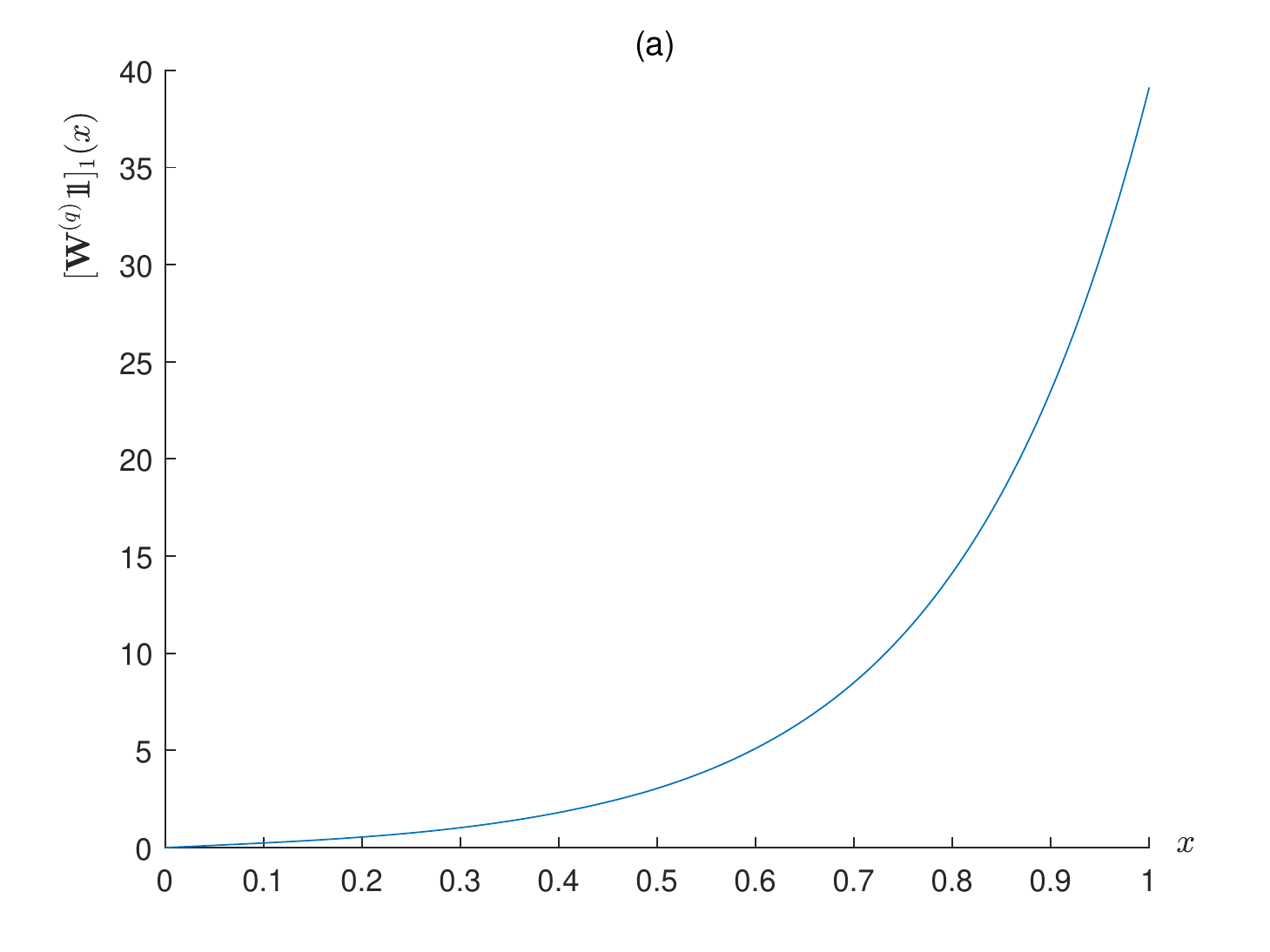}
     \includegraphics[width = 8.1cm]{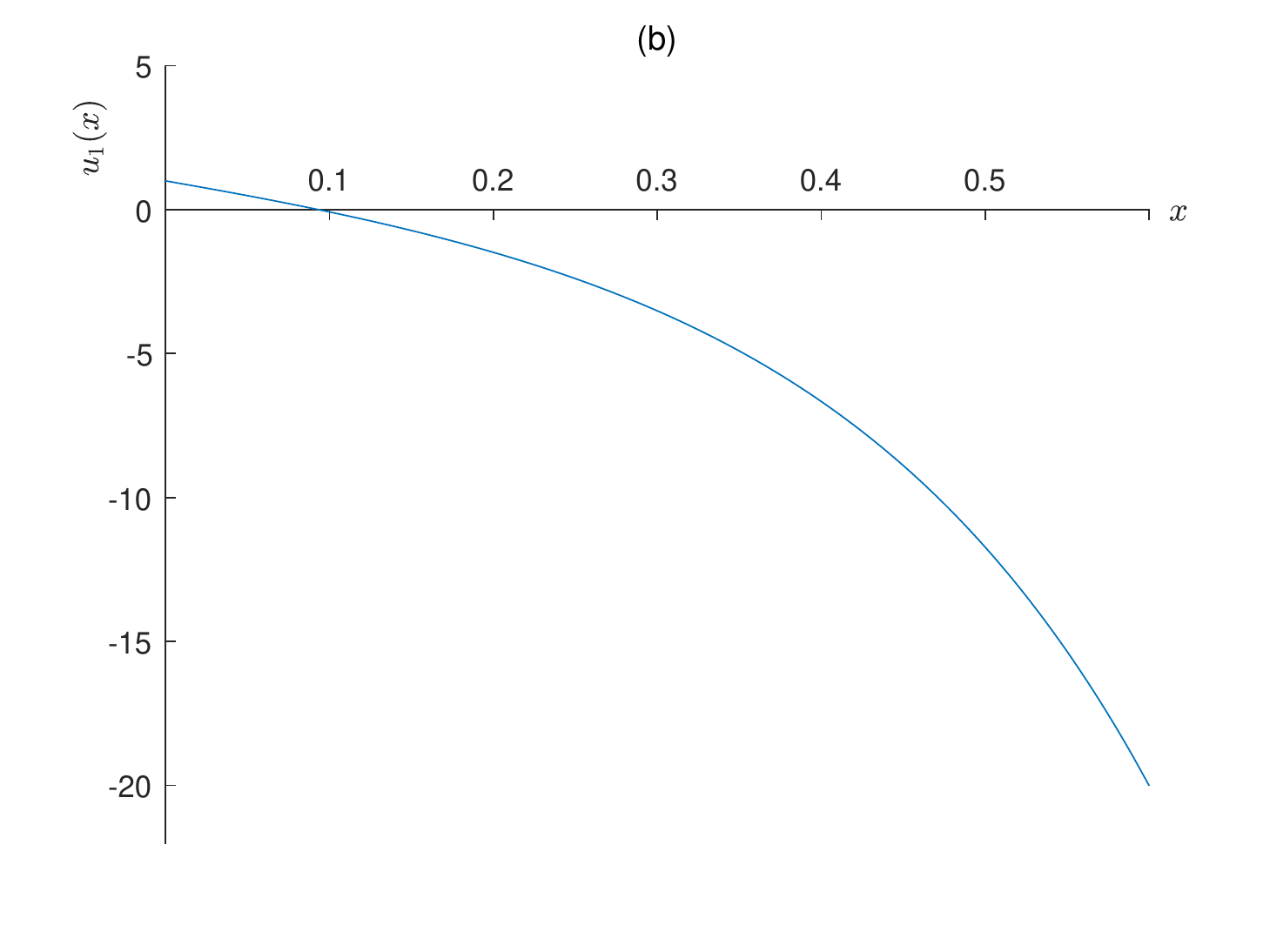}
    \caption{For $q=5$, (a)  $[\W^{(q)} 1\!\!1]_1(x)$, (b) $u_1(x)=[\Z^{(q)} 1\!\!1]_{1}(x) -q [ \W^{(q)}1\!\!1]_{1}(x)$ %{\color{red} there is no a or b}
}
    \label{fig:q5_1}
\end{figure}

\begin{figure}[h!]
    \centering
    \includegraphics[width = 8.1cm]{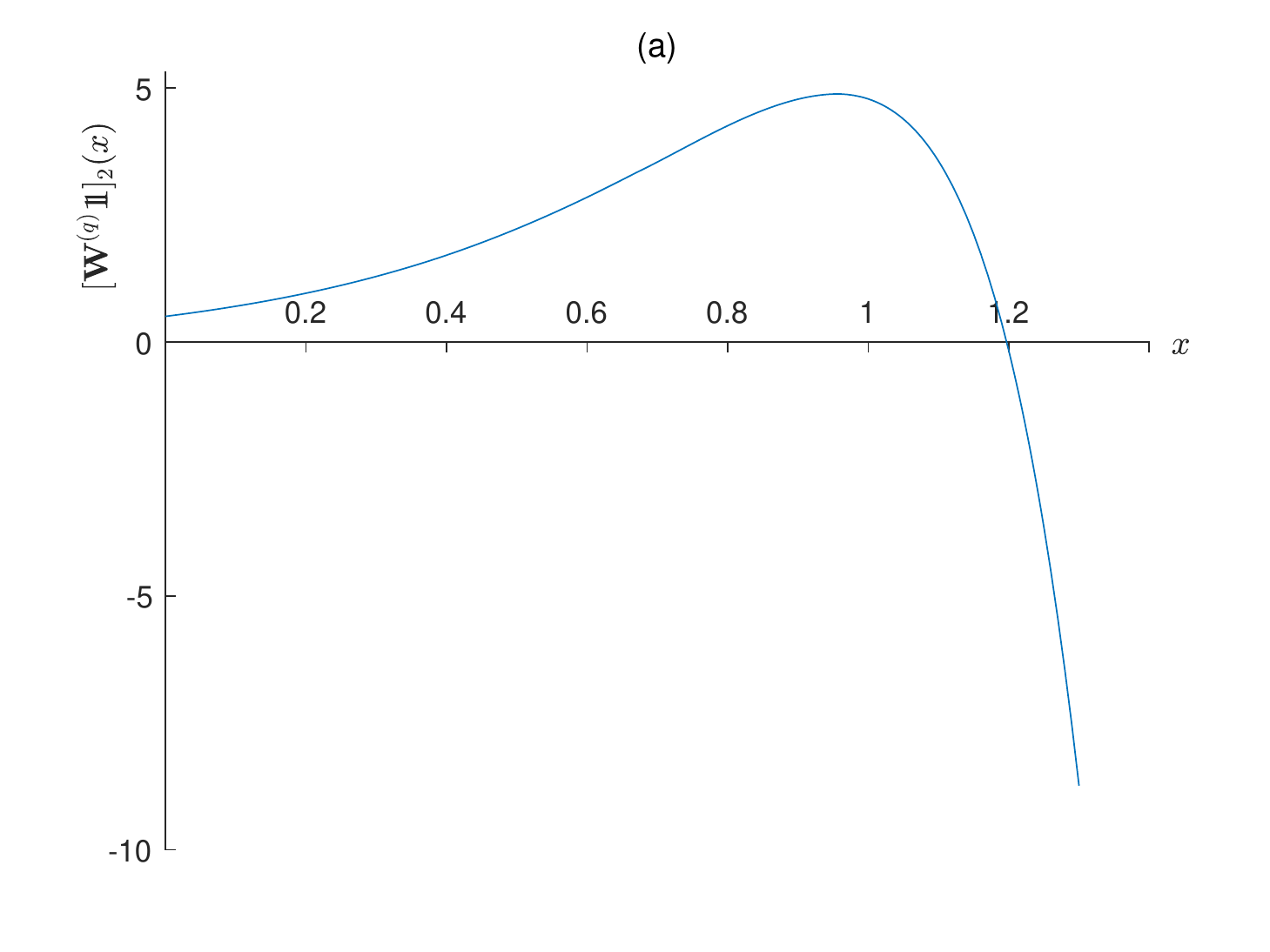}
     \includegraphics[width = 8.1cm]{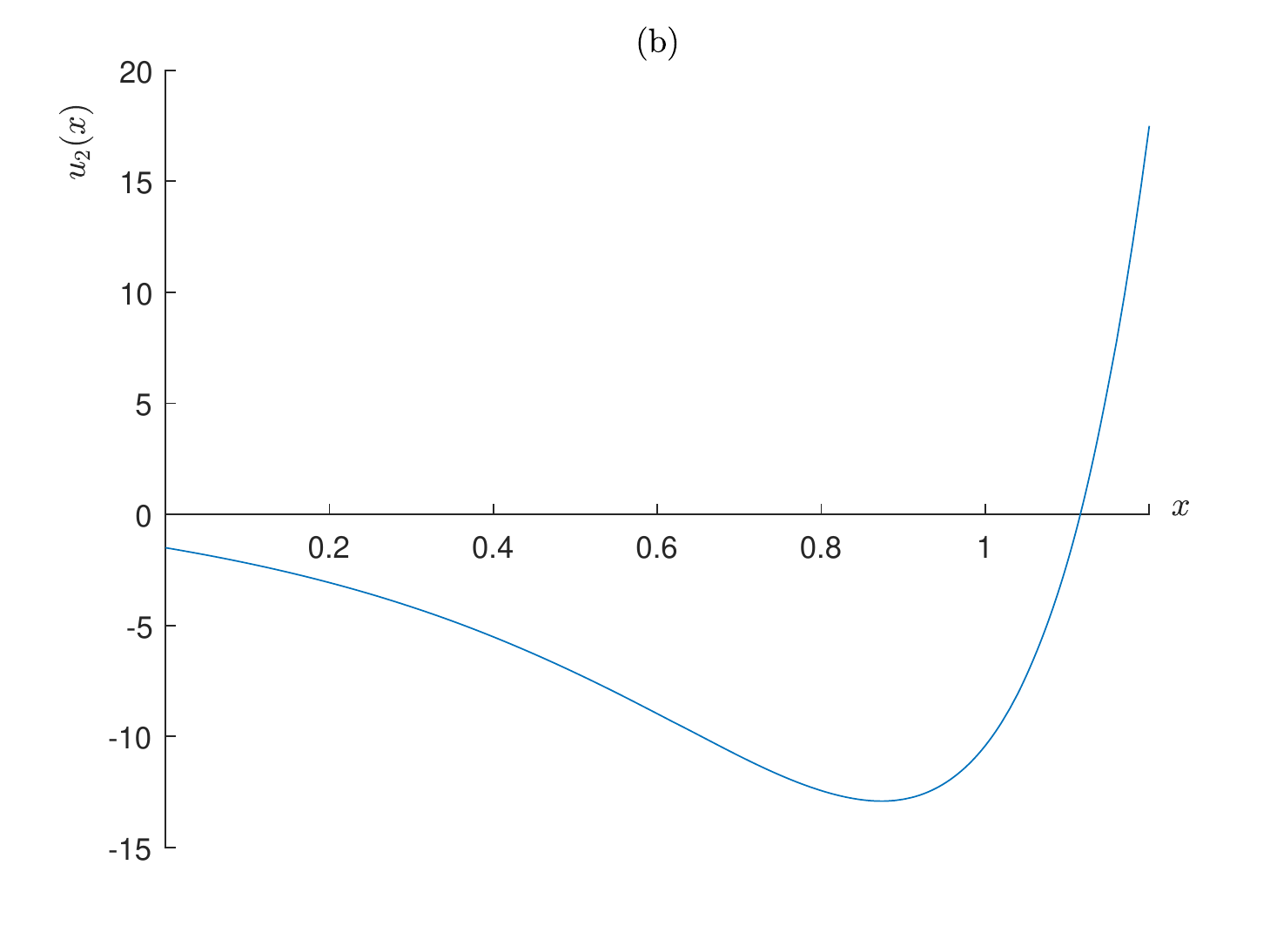}
    \caption{For $q=5$, (a)  $[\W^{(q)} 1\!\!1]_2(x)$, (b) $u_2(x)=[\Z^{(q)} 1\!\!1]_{2}(x) -q [ \W^{(q)}1\!\!1]_{2}(x)$ %\color{red} Can you put (a) rather than a etc. }
}
    \label{fig:q5_2}
\end{figure}

\section{Conclusion}

In  \cite{curdin} and \cite{KO}, an optimal stopping problem with gain function of the form $({\rm e}^{s\wedge\epsilon}-K)^+$ and discounting was considered for spectrally negative L\'evy processes. There, the authors developed the optimal solution in the form of a non-trivial threshold strategy.  In the current setting, the natural analogue of the gain function takes the form
\[
f(s,j)=({\rm e}^{s\wedge\epsilon}-K)^+ \, h_j, \qquad s\in\mathbb{R}, j\in E, \epsilon >\log(K).
\]
Note that we may effectively take  $s^*:= \log(K)$. To find a solution in the spirit of the aforementioned results for L\'evy processes, amongst other things, Theorem \ref{main} requires us to verify the existence of  a solution to
\begin{equation} \label{dif2}
g^\prime(s,j) =
  1- \frac{ {\rm e}^{s\wedge \epsilon} [\Z^{(q)} (g(s,j))  1\!\!1]_j}{ ({\rm e}^{s\wedge \epsilon} -K)[q\W^{(q)}(g(s,j))1\!\!1]_j }, \qquad s\geq \log (K).
\end{equation}
Whilst we offer no details here, given the technical exploration of the Shepp-Shiryaev optimal stopping problem given in this article, we claim that the interested reader will find the analysis in \cite{KO} is robust enough to carry over to the current setting with a number of straightforward technical modifications. Indeed, the fact that the curves $g(s,j)$, for each $j\in E$, can be phrased through autonomously differential equations (involving only dependency on state $j$) allows one to treat them within the same framework of isocline analysis as in \cite{KO}.
The same can in principle be said of the optimal stopping problems considered in \cite{O}.
\bigskip

As such, the conclusion to this paper is that, modulo some technical adaptation,  the analysis that has led to a relatively wealthy base of results on optimal stopping problems, stochastic games and stochastic control problems driven by spectrally negative L\'evy processes, can now be adapted to handle analogous problems driven by spectrally negative MAPs. See \cite{av2} which summarises a number of identities in this spirit which have the potential to be brought forward into the Markov additive setting.

\section*{Appendix: Proof of Theorem \ref{ZWlem}}\label{ZW}

(i) From  \eqref{id1}, we note that
\begin{equation}
\mathbb{E}_{(0,i)}[{\rm e}^{-q\tau^{+}_{a}};\tau_{a}^{+}<\tau_{0}^{-},J_{\tau_{a}^{+}}=j]=[\W^{(q)}(0+)\W^{(q)}(a)^{-1}]_{i,j}.
\label{isitubv}
\end{equation}
In the setting that $i\in E^{\texttt{ubv}}$, the expectation on the left-hand side of \eqref{isitubv} is necessarily 0, as the MAP initially behaves as an independent copy of a L\'evy process with Laplace exponent $\psi_i$.

\bigskip

When $i\in E^{\texttt{bv}}$ in \eqref{isitubv},  with positive probability, the modulator chain $J$ will remain in state $i$ for long enough that  the L\'evy process with Laplace exponent $\psi_i$ will reach $a/2$ before (necessarily) jumping below the origin,  (see the discussion in Chapter 8 of \cite{kbook} for further details on the path properties of spectrally negative L\'evy processes). {Once the ordinate hits $a/2$, thanks to the non-negativity of $\W^{(q)}(x)$ for $x>0$, see part (i), and the identity \eqref{id1},}  there is a positive probability that the process exits into $(-\infty, 0)$  before $(a,\infty)$  with the modulator in state $j\in E$. It thus follows that the left-hand side of \eqref{isitubv} is strictly positive for $i\in E^{\texttt{bv}}$ and $j\in E$.

\bigskip

Recall that
\[
 {\bPsi}(\beta) = {\rm diag}( \psi_1(\beta) \cdots \psi_N(\beta))
  + {\boldsymbol{Q}} \circ {\boldsymbol{G}}(\beta),
\]
and
$$\int^{\infty}_{0}{\rm e}^{-\beta x}\W^{(q)}( x)\dd x=(\bPsi(\beta)-q\I)^{-1}$$
for $\beta >\max\{{\rm Re}(z): z\in \mathbb{C}, {\rm det}(\bPsi(z)) =q \}$.
By a property of Laplace transform, we have
\begin{equation} \label{W0+}
\W^{(q)}(0+) =\lim_{\beta\rightarrow\infty} \beta (\bPsi(\beta)-q\I)^{-1}\; .
\end{equation}
We start with finding
\[
\lim_{\beta\rightarrow\infty} \frac{1}{\beta} (\bPsi(\beta)-q\I)
\]
and invert this limit to get \eqref{W0+}. The diagonal entries of the matrix $\bPsi(\beta)-q\I$ are in the form $\psi_i(\beta)-q_i-q$, where $q_i=\sum_{j\neq i} q_{i,j}$, and the off-diagonal entries are given by $ q_{i,j} \mathbb{E}({\rm e}^{\beta\, U_{i,j}})$. For the diagonal entries, we have
\[
\lim_{\beta\rightarrow\infty} \frac{1}{\beta} (\psi_i(\beta)-q_i-q) = \lim_{\beta\rightarrow\infty} \frac{1}{\beta} \psi_i(\beta) = \frac{1}{W^{(q)}_i(0+)}
\]
which is finite when $i\in E^{\rm{bv}}$, and infinite for $i\in E^{\rm{ubv}}$, as  $W^{(q)}_i(0+)=0$ in that case. On the other hand, the limit is 0 for the off-diagonal entries because
$
\lim_{\beta\rightarrow\infty}   \mathbb{E}({\rm e}^{\beta\, U_{i,j}})/\beta=0$
as $U_{ij}<0$ a.s. Then, the result follows from \eqref{W0+}.

\bigskip

(ii) We recall the observation from \eqref{pos} that $\W_{\Phi(q)}(\infty)$ may have negative entries. Hence, thanks to  \eqref{Wqdecomp} and the non-negativity of $\boldsymbol{v}(\Phi(q))$, the matrix $\W^{(q)}(x)$ may have negative entries.  On the other hand, we can also see from \eqref{pos} and \eqref{Wqdecomp}
\begin{align*}
\W^{(q)}(x) \W^{(q)}(a)^{-1} &={\rm e}^{\Phi(q)(x-a)}\bDelta _{\boldsymbol{v}}\left( \Phi(q)\right)
\W_{\Phi(q)}(x)\W_{\Phi(q)}(a)^{-1}
\bDelta _{\boldsymbol{v}}\left( \Phi(q)\right)^{-1}\\
&={\rm e}^{\Phi(q)(x-a)}\bDelta _{\boldsymbol{v}}\left( \Phi(q)\right)
\exp\left(\int_x^a \bLambda^*_q (y)\dd y\right)
\bDelta _{\boldsymbol{v}}\left( \Phi(q)\right)^{-1},
\end{align*}
which gives almost everywhere differentiability of $\W^{(q)}(x) \W^{(q)}(a)^{-1} $.
\bigskip

To show the strict positivity of the almost everywhere derivative of $\W(x)\W(a)^{-1}$, for $0\leq x\leq a<\infty$, we need to delve into excursion theory.
It is a straightforward exercise to show that $(\overline{X}-X, J)$ is a reflected MAP. We define
$\bar{M}:=\{t\ge 0 : \overline{X}_{t} - X_t=0\}$ and $\bar{M}^{cl}$ its closure in $[0,\infty)$.
Obviously the set $[0,\infty)\backslash \bar{M}^{cl}$ is an open set and can be written as a union of intervals. We use $\bar{G}$ and $\bar{D}$, respectively, to denote the sets of left and right end points of such intervals.
Define $\bar{R}:=\inf\{t>0:\ t\in \bar{M}^{cl}\}$.
Upwards regularity implies that every point in ${E}$ is regular for $\bar{M}$
{in the sense that $\mathbb{P}_{0,\theta}\left(\bar{R}=0\right)=1$ for all $\theta\in{E}$.}
Thus by
\cite[Theorem (4.1)]{M}
there exists a continuous additive functional $t\mapsto\ell_{t}$ of
$(\overline{X}-X, J)$
which is carried by $\{0\}\times {E}$ and a family of kernels $(\texttt{N}_i, i\in E)$  on $\mathbb{U}$, the space of c\`adl\`ag  mappings from $\mathbb{R}$ to  $ [0,\infty)\times E$ with lifetime $\zeta$ and terminal position at $\zeta$ which  is negative  (written canonically  as a co-ordinate sequence $((\epsilon(s), \Theta(s)), s\leq \zeta)$),
satisfying
$\texttt{N}_i(\zeta=0)=\texttt{N}_i(\Theta(0) \not=i)=0$ and $\texttt{N}_i(1-\mathrm{e}^{-\zeta})\le 1$, for $i\in E$, such that we have the exit system
\begin{equation}\label{M1}
\mathbb{E}_{(0,i)}\left[\sum_{s\in \bar{G}}\Xi_{s}f\circ\theta_{s}\right]=\mathbb{E}_{(0,i)}\left[\int_{0}^{\infty}\Xi_{s}\, \texttt{N}_{J_s}(f){\rm d}\ell_{s}\right]
\end{equation}
for any
non-negative predictable
process $\Xi$ and
any non-negative function $f$ which is measurable with respect to the natural filtration on $\mathbb{U}$. Moreover, by \cite[Theorem (5.1)]{M},  under $\texttt{N}_i$, the process $((\epsilon(s), \Theta(s)), s\leq \zeta)$ has the strong Markov property.
\bigskip

The exit system $((\texttt{N}_i, i\in E), \ell)$ is not unique. A different choice of $\ell$ will result in a different choice of $(\texttt{N}_i, i\in E)$. A convenient choice of $\ell$ that we will work with is $\ell = \overline{X}$.

\bigskip

With this excursion theory in hand, we can identify (in the spirit of the representation of the scale function for L\'evy processes found in Chapter VII of \cite{bertbook}), for $x\leq a$,
\begin{align}
[\W(x)\W(a)^{-1}]_{i,j}&=\mathbb{P}_{(x,i)}(\tau^+_a <\tau^-_0; J_{\tau^+_a} = j)
\notag\\
& = \mathbb{E}_{(x,i)}\left[\exp\left(-  \int_x^a \texttt{N}_{J^+_s}(\bar{\epsilon}>s)
\dd s \right); J^+_{a} = j\right],\qquad i,j\in E,
\label{diffable}
\end{align}
where $\bar{\epsilon} = \sup_{s<\zeta}\epsilon(s)$ and $(J^+_s, s\geq 0)$ is the modulator of the ascending ladder MAP such that $J^+_s = J_{\tau^+_s}$, $s\geq 0$ (cf. the Appendix of \cite{DDK}).
Taking account of the fact that $s\mapsto\texttt{N}_{J^+_s}(\bar{\epsilon}>s)$ is right continuous, we now see by dominated convergence that  $[\W(x)\W(a)^{-1}]_{i,j}$ is  almost everywhere differentiable and that its derivative is right-continuous.

\bigskip

For strict positivity of the aforesaid right-continuous derivative, let us introduce the random variable $N_{a,j,k}$ which is the number of times $\overline{X}-X$ exceeds level $k$ in an excursion which begins in state $j$, before $X$ exceeds $a$. From \eqref{M1} we have that, for any $a,k>0$,
\begin{align}
\mathbb{E}_{(0,i)}[N_{a,i,k}]&=\mathbb{E}_{(0,i)}\left[\sum_{s\in \bar{G}}\mathbf{1}_{(s<\tau^+_a)}\mathbf{1}_{(\epsilon(0) = i, \, \bar\epsilon\, > k)}\circ\theta_{s}\right]\notag\\
&=\mathbb{E}_{(0,i)}\left[\int_{0}^{\infty}\mathbf{1}_{(s<\tau^+_a)}\mathbf{1}_{(J_s = i)}\, \texttt{N}_{i}(\bar\epsilon>k){\rm d}\ell_{s}\right] \notag\\
&=
\mathbb{E}_{(0,i)}\left[\int_{0}^{a} \mathbf{1}_{(J^+_s = i)}{\rm d}{s}\right] \texttt{N}_{i}(\bar\epsilon>k)\, ,
\label{E1}
\end{align}
where we have used our choice $\ell = \overline{X}$.
On the other hand, we can lower bound $\mathbb{E}_{(0,i)}[N_{a,i,k}]$ by restricting the probability space to the event that $\{\sigma_1>\tau^+_a\}$ (recall $\sigma_1$ is the first time that $J$ jumps). As such, we have
\begin{align}
\mathbb{E}_{(0,i)}[N_{a,i,k}]&\geq \mathbf{E}^i_0\left[{\rm e}^{-|q_{i,i}|\tau^{i,+}_{a}}
\sum_{s<\tau^{i,+}_a}\mathbf{1}_{(\overline{X}^i_s - X^i_s >k)}\right] \notag\\
&={\rm e}^{-\phi_i(|q_{i,i}|)a}  {_{i}\mathbf{E}}^{\phi_i(|q_{i,i}|)}_0\left[
\sum_{s<\tau^{i,+}_a}\mathbf{1}_{(\overline{X}^i_s - X^i_s >k)}\right]\notag\\
& =  a{\rm e}^{-\phi_i(|q_{i,i}|)a} \times a\texttt{n}_i(\bar{\epsilon}>k)>0,
\label{E2}
\end{align}
where we recall that $(X^i_t, t\geq 0)$ with probabilities $(\mathbf{P}^i_x, x\in \mathbb{R})$ is the L\'evy process with Laplace exponent $\psi_i$, with  $\phi_i$ as  its the right inverse;  $\mathbf{P}^{i\,, \phi_i(|q_{i,i}|)}_0$ is the result of the Esscher change of measure based on the  martingale $\exp(|q_{i,i}|{ X^{i}_{t}} - \phi(|q_{i,i}|) t )$, $t\geq 0$, $\texttt{n}_i$ is the excursion measure for the Poisson point process of excursions of ${\overline{X}^i} -{ X^i}$ away from $0$ and $\tau^{i,+}_a = \inf\{t>0 : X^i_t>a\}$.

\bigskip

Now  comparing \eqref{E1} and \eqref{E2}, we see that $\texttt{N}_{i}(\bar\epsilon>k)>0$ for all $i\in E$ and $k>0$. Referring back to \eqref{diffable}, we can also see that the right-continuous derivative of $[\W(x)\W(a)^{-1}]_{i,j}$ is strictly positive. Statement (ii) is thus proved for $\W(x)\W(a)^{-1}$.

\bigskip
As was mentioned in (19) of \cite{kypal} (cf. \eqref{Wqdecomp}), we can identify, for $0\leq x\leq a$,
\[
\W^{(q)}(x)\W^{(q)}(a)^{-1} =\bDelta _{\boldsymbol{v}}\left( \Phi(q)\right)
{\rm e}^{\Phi(q)x}
\W_{\Phi(q)}(x)\W_{\Phi(q)}(a)^{-1}
\bDelta _{\boldsymbol{v}}\left( \Phi(q)\right)^{-1},
\]
where $\Phi(q)$ was defined in \eqref{kinverse} and $\W_{\Phi(q)}$ plays the role of $\W^{(0)}$ under $(\mathbb{P}^{\Phi(q)}_{(x,i)}, x\in \mathbb{R}, i\in E)$, as defined in \eqref{alaGirsanov}.
This observation thus allows us to pass the conclusion of the previous paragraph to the more general setting of $\W^{(q)}(x)\W^{(q)}(a)^{-1}$.

\bigskip

(iii) Recall the entries of the matrix $\bPsi(\beta)-q\I$ from the proof of (i) above.
As $\W^{(q)\prime}$ exists a.e. by (ii), we can use its Laplace transform given by $\beta (\bPsi(\beta)-q\I)^{-1} -\W^{(q)}(0+) $, to obtain $\W^{(q)\prime}(0+)$ as
\begin{equation}  \label{derivative}
\W^{(q)\prime}(0+)=  \lim_{\beta\rightarrow\infty} \left( \beta^2 (\bPsi(\beta)-q\I)^{-1}-\beta \W^{(q)}(0+) \right)\; .
\end{equation}
Now consider
\begin{equation}  \label{aux}
    \frac{1}{\beta^2} (\bPsi(\beta)-q\I) \; .
\end{equation}
As $\beta\rightarrow\infty$, the limit  is 0 for the off-diagonal entries of  \eqref{aux} because
$
\lim_{\beta\rightarrow\infty}   \mathbb{E}({\rm e}^{\beta\, U_{i,j}})/\beta^2=0$
in view of the fact that $U_{ij}<0$ a.s.
Moreover, the diagonal entries limit to the Gaussian coefficient of the individual Laplace exponents $\psi_i$, $i\in E$, (which may be zero for some entries).  It follows from \eqref{derivative} that $\W^{(q)\prime}(0+)$ is  a diagonal matrix because $\W^{(q)}(0+)$ is diagonal, by (i), and
\begin{equation} \label{inverse}
\lim_{\beta\rightarrow\infty} \beta^2(\bPsi(\beta)-q\I)^{-1},
\end{equation}
is also  diagonal thanks to  the limit  \eqref{aux}.

\bigskip

%Clearly, we have disregarded, or just assumed, the existence of the limit for the diagonal entries in \eqref{inverse} up to this point. We claim that this limit exists, possibly being $-\infty$ or $+\infty$. To see this, note that    $\lim_{\beta\rightarrow\infty}\beta \W_{i,i}^{(q)}(0+)$ is either 0 or $+\infty$ since  $\W_{i,i}^{(q)}(0+)$ is  0 if $i\in E^{\rm{ubv}}$ and strictly positive if $i\in E^{\rm{bv}}$. Then, the diagonal elements of \eqref{inverse} should be well-defined by \eqref{derivative} as $\W^{(q)\prime}(0+)$ is well-defined in view of (ii).
%
%\bigskip
We will consider  the limit of a diagonal entry $i$ in \eqref{derivative} only for $i\in E^{\rm{ubv}}$, in which case, this entry is equal to the  diagonal entry $i$ in \eqref{inverse} as $\W^{(q)}_{i,i}(0+)=0$. The latter can be found as the reciprocal of the diagonal entry of the limit of \eqref{aux}, which is given by
\[
\lim_{\beta\rightarrow\infty} \frac{1}{\beta^2} (\psi_i(\beta)-q_i-q) = \frac{1}{W^{(q+q_i)\prime}_i(0+)}
\]
since  $W^{(q+q_i)}_i(0+)=0$ for $i\in E^{\rm{ubv}}$.

\bigskip

(iv) From the definition of $\Z^{(q)}$, it is clear that
$
\Z^{(q)\prime}(x)=W^{(q)}(x)(q\I-\Q)\mathbf{1}_{(x\geq 0)},
$
and hence $\Z^{(q)}$ is continuously differentiable, except at $0$ where there is the possibility of
$\W^{(q)}(0+)\neq \boldsymbol{0}.$ Similarly $\Z^{(q)\prime\prime}(x) = \W^{(q)\prime}(x)(q\I-\Q)\mathbf{1}_{(x>0)}$  for almost every $x>0$, and right-continuity of $\Z^{(q)\prime\prime}(x)$ is an immediate consequence of part (ii).
\qed

\begin{rem}\label{taugrem}\rm The introduced excursion theory above also gives rise to a simple proof that  the stopping time $\tau_{c}=\inf\{t\ge 0:\overline{X}_{t}-X_t\ge c_{\bar{J}_t}\}$ is $\mathbb{P}$-almost surely finite when $c_j\in(0,\infty)$ for at least one $j\in E$.  Indeed, note that, for all $x\leq s$ and $i,j\in E$,
\[
\mathbb{P}_{(x,s, i, j)}(\tau_c =\infty)  = \mathbb{E}_{(x,s, i, j)}\left[\exp\left(-\int_0^\infty \texttt{N}_{j}(\bar{\epsilon}>c_j)|_{j= J^+_s}\dd s\right)\right]=0.
\]
\end{rem}

\section*{Acknowledgements} AEK and M\c{C} are grateful for reciprocal support they each received from their respective universities in order to explore Bath-Ko\c{c} collaborative research. All authors would like to thank three referees for their extensive remarks which brought about many improvements to the original document.

\end{document}